\font\tencmmib=cmmib10 \skewchar\tencmmib '60
\def\lessim{\ \lower4pt\hbox{$
		\buildrel{\displaystyle <}\over\sim$}\ }
\def\gessim{\ \lower4pt\hbox{$\buildrel{\displaystyle >}
		\over\sim$}\ }
\newcommand{\R}{\mathbb{R}}
\newcommand{\RR}{\mathcal{R}}
\newcommand{\E}{\mathbb{E}}
\newcommand{\p}{\mathbb{P}}
\renewcommand{\epsilon}{\varepsilon}
\newcommand{\<}{\langle}
\renewcommand{\>}{\rangle}
\newcommand{\s}{\sigma}
\newcommand{\e}{\mathbb{E}}
\newcommand{\la}{\<}
\newcommand{\ra}{\>}
\newcommand{\cf}{C\!F}
\DeclareMathOperator{\MMSE}{MMSE}
\DeclareMathOperator{\DMSE}{DMSE}
\newtheorem{theorem}{Theorem}[section]
\newtheorem{lemma}{\bf Lemma}[section]
\newtheorem{prop}{\bf Proposition}[section]
\newtheorem{remark}{\bf Remark}[section]
\newtheorem{definition}{\bf Definition}[section]
\numberwithin{equation}{section}
\begin{document}
	
\title{Phase transition in random tensors with multiple independent spikes}
\author{Wei-Kuo Chen, Madeline Handschy, Gilad Lerman}
\date{}
\maketitle

\begin{abstract}

Consider a spiked random tensor obtained as a mixture of two components: noise in the form of a symmetric Gaussian $p$-tensor for $p\geq 3$ and signal in the form of a symmetric low-rank random tensor. 
The latter is defined as a linear combination of $k$ independent symmetric rank-one random tensors, referred to as spikes, with weights referred to as signal-to-noise ratios (SNRs).
The  entries of the vectors that determine the spikes are  i.i.d. sampled from general probability distributions supported on bounded subsets of $\mathbb{R}$.
This work focuses on the problem of detecting the presence of these spikes, and establishes the phase transition of this detection problem for any fixed $k \geq 1$. In particular, it shows that for a set of relatively low SNRs it is impossible to distinguish between the spiked and non-spiked Gaussian tensors. Furthermore, in the interior of the complement of this set, where at least one of the $k$ SNRs is relatively high, these two tensors are distinguishable by the likelihood ratio test. In addition, when the total number of low-rank components, $k$,  of the $p$-tensor of size $N$ grows in the order $o(N^{(p-2)/4})$ as  $N$  tends to infinity, the problem exhibits an analogous phase transition. This theory for spike detection is also shown to imply that recovery of the spikes by the minimum mean square error exhibits the same phase transition. The main methods used in this work arise from the study of mean field spin glass models, where  the phase transition thresholds are identified as the critical inverse temperatures distinguishing the high and low-temperature regimes of the free energies. 
In particular, our result formulates the first full characterization of the high temperature regime for  vector-valued spin glass models with independent coordinates.

\end{abstract}

\tableofcontents
\section{Introduction} \label{sec:intro}

This work studies the detection and recovery of a low-rank component in a particular random tensor and characterizes
their corresponding phase transitions.
In order to motivate this problem, we first discuss a simpler and widely-studied question: When can principal component analysis (PCA) detect and recover low-rank linear structures in noisy data?
While detection only requires determining the presence or absence of low-rank structure, the task of recovery aims to reveal the concealed low-rank structure. 

One common setting for addressing this question assumes data points $y_1, \dots, y_L \in \R^N$ drawn independently from the multivariate normal distribution $\mathcal{N}(0,I+\beta  uu^T)$, where $I$ is the $N$-dimensional identity matrix, which generates spherically symmetric Gaussian noise, $u$ is a unit column vector in $\mathbb{R}^N$, which generates a rank-one signal, and $\beta>0$ is the signal-to-noise ratio (SNR). Under this model, the observations $y_i$, $i=1, \dots, L$, take the form $y_i = x_i + \epsilon_i$, where $x_i$ is proportional to the signal $u$ with signal-to-noise ratio $\beta$, and $\epsilon_i$ is the Gaussian noise. The question is then whether or not it is possible to apply PCA to detect the presence of the signal $u$ when given the data points $y_1, \dots, y_L$ with different choices of the SNR parameter $\beta$. The earlier result for this problem traces back to the fundamental work of Johnstone \cite{Johnstone}.

Assume that $N/L \rightarrow \gamma < 1$ as $L \rightarrow \infty$.
When $\beta=0,$ the Marchenko-Pastur distribution \cite{Mar_Pas} describes the limiting distribution of the eigenvalues of the sample covariance matrix. The well-known Baik-Ben Arous-P\'ech\'e phase transition \cite{BBP, baik2} states that when $\beta \leq \sqrt{\gamma}$, the eigenvalues of this matrix still follow the Marchenko-Pastur distribution and thus detection of the low-rank sample is impossible by PCA. In contrast, when $\beta > \sqrt{\gamma}$, the largest eigenvalue of this matrix stays away from the typical location of the Marchenko-Pastur distribution and PCA can detect the presence of the signal. This phase transition of spike detection is extended in Paul \cite{paul2007} to spike recovery by PCA. More precisely, \cite{paul2007} shows that when $\beta > \sqrt{\gamma}$, there is a non-trivial asymptotic correlation between the top eigenvector of the sample covariance and $u$ and thus one can approximately recover $u$ by PCA. Moreover, when $\beta \leq \sqrt{\gamma}$ this  asymptotic correlation is zero and PCA cannot recover $u$. Extension of detection and recovery to the case where $\gamma \geq 1$ is also established in \cite{paul2007}.

Another common setting for studying the detection problem using PCA assumes a random matrix of the form $T = W + \beta N^{-1/2} uu^T$, where $W$ is an $N \times N$ Gaussian Wigner matrix\footnote{$W$ is a symmetric matrix with independent $W_{ij}\thicksim \mathcal{N}(0,1/2)$ for $1 \leq i<j \leq N$ and $W_{ii}\thicksim \mathcal{N}(0,1)$ for $1 \leq i \leq N$.} and $u$ is an $N$-dimensional random vector with i.i.d. entries sampled from a bounded distribution on $\R$. The parameter $\beta$ is the SNR. We refer to the rank one component, $uu^T$ as a spike and to $T$ as a spiked random matrix.
The problem is to detect the presence of the spike in $T$, or equivalently, to distinguish between $T$ and $W$.
This detection problem exhibits a phase transition similar to that of the previous setting, see F\'eral-P\'ech\'e \cite{PecheD}, P\'ech\'e \cite{Peche}, and Benaych-Georges-Nadakuditi \cite{benaych2011, benaych2012}. When the SNR is below  a certain critical threshold, the eigenvalue distribution of $T$ follows Wigner's semi-circle law and it is thus impossible to distinguish between $T$ and $W$. Once the value of $\beta$ exceeds  this  critical threshold, the largest eigenvalue jumps away from the typical location of the Wigner semi-circle law and the top eigenvector nontrivially correlates with the signal.  Consequently, in this case, one can detect and approximately recover the signal by PCA. Recent studies of phase transitions in detection and recovery of low-rank signals in random matrices include Lelarge-Miolane \cite{LM+16}, Miolane \cite{M+17,miolane2018}, Montanari-Reichman-Zeitouni \cite{MRZ17}, Montanari-Richard \cite{MRZ17,MR+16}, Onatski-Moreira-Hallim \cite{OMH13}, and Perry-Wein-Bandeira-Moitra \cite{PWBM16}.

The latter setting of low-rank detection in spiked random matrices has a natural higher-order generalization to spiked random tensors.
This generalization considers the spiked symmetric random $p$-tensor $$T_k=W+\frac{1}{N^{(p-1)/2}}\sum_{r=1}^k\beta_ru(r)^{\otimes p}.$$ The first component, $W$, is the symmetric Gaussian  $p$-tensor of size $N^p$, formally defined  in Section~\ref{sec2.1}. The second component is the signal, which is a linear combination of the spikes $u(1)^{\otimes p}$, $\ldots$,  $u(k)^{\otimes p}$. Here, $u(1),\ldots,u(k)$ are $N$-dimensional vectors whose entries are i.i.d. sampled from probability measures $\mu_1,\ldots,\mu_k$ supported on bounded subsets of the real line. We refer to $\hat\beta = (\beta_1, \dots, \beta_k)$ as the vector of SNRs. The detection problem under this setting asks whether identification of the low-rank signal $\sum_{r=1}^k\beta_ru(r)^{\otimes p}$ in the tensor $T_k$ is possible for a given vector $\bar \beta$.
The recovery problem seeks to recover, if possible, the low-rank signal  for given values of $\bar \beta$. Answering these question of whether the spike is detectable or recoverable requires characterization of the phase transitions in $\bar \beta$ of the detection and recovery problems.

We remark that the generalized tensor setting is significantly more challenging than the above setting of detecting and recovering rank-one structure in matrices. The former setting involved the best rank-one approximation by PCA.
However, for tensors, basic relevant notions, such as rank and best low-rank approximation, are not obvious, see Kolda-Bader \cite{kolda2009tensor}. Furthermore, many common algorithms for computing these and related notions are NP-hard, see Hillar-Lim \cite{HillarL13}.
In this work, we study low-rank tensor detection and recovery by common theoretical tests and estimators, which are hard to compute.  We leave the analysis of tractable procedures to future work.
Following Chen \cite{Chen17}, El Alaoui-Krzkala-Jordan \cite{AKJ+17}, Montanari-Reichman-Zeitouni \cite{MRZ17}, Montanari-Richard \cite{MR+14}, and Perry-Wein-Bandeira \cite{PWB17}, we say that spike detection is impossible if the total variation distance between $W$ and $T_k$ vanishes when $N$ tends to infinity. In other words, any statistical test fails to distinguish $W$ and $T_k$ (see Section~\ref{sec2.1}). On the other hand, we say that detection is possible if this distance is one in the limit.
This means that asymptotically one can find a statistical test, in particular, the likelihood ratio test,  that distinguishes between $W$ and $T_k$ (see Section \ref{sec2.1}).
For recovery, we follow \cite{LMLKZ+17} and use the minimum mean square error (MMSE) and its corresponding estimator.

Many recent works, which are reviewed in Section \ref{prev_res}, have studied detection and recovery under the spiked random tensor model. Nevertheless, the optimal phase transition for low-rank detection in spiked random tensors has not yet been established. This paper aims to close this gap. Our main result states that there exist critical thresholds $\beta_{1,c}, \dots, \beta_{k,c}$ and a set of the form $\bar {\mathcal{R}} = (0,\beta_{1,c}] \times \cdots \times (0,\beta_{k,c}]$ such that detection is impossible if $\bar \beta = (\beta_1, \dots, \beta_k)$ lies strictly in the interior of the set $\bar \RR$.  Furthermore, it is possible to detect the spike via the  likelihood ratio test when $\bar \beta \not \in \bar {\mathcal{R}}$. In other words, detection is possible only when at least one of $\beta_1,\ldots,\beta_k$ exceeds its critical threshold;
whereas, if $\beta_1,\ldots,\beta_k$ are all smaller than their critical thresholds, one cannot detect the spike. Our result also allows the total number of spikes to grow with $N$. In particular, if $\mu_1=\cdots=\mu_k$ and $k=o(N^{(p-2)/4})$, then similar statements hold. A byproduct of these developments is a new proof for a recent result on the recovery problem by Lesieur-Miolane-Lelarge-Krzakala-Zdeborov\'a \cite{LMLKZ+17} when assuming the same setting of the present paper. In essence, their result states that $\beta_{1,c},\ldots,\beta_{k,c}$ are the critical thresholds for the MMSE recovery problem.

Our approach is based on methodologies from the study of mean-field spin glass models. Roughly speaking, spin glasses are spin systems that exhibit both quenched disorder and frustration. That is, the interactions between sites are disordered and spin constraints cannot be simultaneously satisfied. These two features are commonly shared by many problems that involve randomized combinatorial optimization, see M\'ezard-Montanari \cite{Montanari_spin_book2012} and Montanari-Sen \cite{Montanar_Sen_STOC2016}.
The book, M\'ezard-Parisi-Virasoro \cite{MPV}, reviews the area of spin glasses from the point of view of physicists, whereas mathematical treatments of the subject appear in Talagrand \cite{Talbook1,Talbook2} and Panchenko \cite{Pan}.

Mean-field spin glasses are related to the detection problem by the following key observation: The total variation distance between $W$ and $T_k$ can be represented as an integral of the distribution function of the so-called free energy of the re-centered pure $p$-spin model with vector-valued spin configurations (see \eqref{fg} and Lemma \ref{dtv_free} below). From this observation, to study the detection problem, we need to understand the full high-temperature regime of this model and investigate a sharp upper bound on the fluctuation of the free energy for all values of the SNR vector $\bar \beta$. Our results reveal that $\bar {\mathcal{R}}$ is indeed the high-temperature regime of the free energy and its fluctuation is up to the order $N^{-(p/2+1)}$ when $\bar \beta$ lies in the interior of $\bar {\mathcal{R}}$ and is of order 1 when $\bar \beta$ lies in the complement of $\bar {\mathcal{R}}$. These allow us to completely characterize the phase transition of the detection problem. We mention that while our study of the high-temperature behavior of the free energy is mainly used to derive results for spike detection, it is also of independent interest in the field of spin glasses. Indeed, our result (see Theorem \ref{thm6}) gives the first full characterization of the high temperature regime for  vector-valued spin glass models with independent coordinates.


\smallskip
\smallskip

\noindent \textbf{Acknowledgement:} The authors thank the anonymous referee for providing many useful suggestions regarding the presentation of the paper.
The research of W.-K. Chen is partly supported by NSF grants DMS-16-42207 and DMS-17-52184, and Hong Kong Research Grants Council GRF-14302515.
He thanks the National Center for Theoretical Sciences and Academia Sinica in Taipei for the hospitality during his visit in June and July 2018, where part of the results and writings were completed.
In addition, he is grateful to Lenka Zdeborov\'{a} for many illuminating discussions. The research of G. Lerman is partially supported by NSF grants DMS-14-18386 and DMS-18-21266.

\section{Main Results}\label{sec2}
This section states the main results of this paper and provides the necessary mathematical background. Additionally, it reviews prior results and describes the structure of the rest of the paper, particularly the structure of the proofs of Theorems \ref{thm1} - \ref{maintheorem2}. Section \ref{sec2.1} defines the necessary terminology, especially, the distinguishability of two random tensors. Section \ref{sec2.2} describes our main results for the detection problem in the case of a single spike. In particular, it introduces an auxiliary function that characterizes the high-temperature regime and allows one to simulate the critical SNR. Using this function, we demonstrate numerical simulations of the critical SNR for the sparse Rademacher prior. Section \ref{sec2.3} states our main results for the detection problem in the case of multiple spikes.
Section \ref{sec2.4} mentions a result for recovery by MMSE that is later obtained from our results for spike detection.
Section \ref{prev_res} surveys recent related results. Finally, Section \ref{structure} describes the organization of the proofs of the main results.

\subsection{Settings and Definitions} \label{sec2.1} Let $p\geq 2$ be an integer. For any integer $N\geq 1$, denote by $\Omega_N$ the set of all real-valued $p$-tensors $Y = (Y_{i_1,\dots,i_p})_{1 \leq i_1, \ldots, i_p \leq N}$ equipped with the Borel $\sigma$-field. The inner product of two $p$-tensors is
\[
\<Y,Y'\> = \sum_{1 \leq i_1,\dots,i_p \leq N} Y_{i_1,\dots,i_p}Y'_{i_1,\dots,i_p}.
\]
Given a vector $u=(u_1,\ldots,u_N) \in \mathbb{R}^N$, we form a rank-one $p$-tensor using the outer product by
\[
(u^{\otimes p})_{i_1,\dots,i_p} = u_{i_1}\cdots u_{i_p}, \quad \forall 1\leq i_1,\ldots,i_p\leq N.
\]
Given $Y \in \Omega_N$ and a permutation $\pi$ of the set $\{1,2,\dots,p\}$, define $Y^{\pi}$ by
\[
Y^{\pi}_{i_1,\dots,i_p} = Y_{\pi(i_1),\dots,\pi(i_p)}.
\]
A $p$-tensor is said to be symmetric if $Y^{\pi}_{i_1,\dots,i_p} = Y_{i_1,\dots,i_p}$ for all corresponding indices and permutations.
Throughout the rest of the paper, we assume that $Y$ is a random $p$-tensor and all entries in $Y$ are i.i.d. standard Gaussian.
The symmetric Gaussian $p$-tensor of size $N^p$ is obtained by the averaging over all permutations in the symmetric group of $N$ letters:
\[
W = \frac{1}{p!} \sum_{\pi } Y^{\pi}.
\] 
In the case $p=2$, $W$ is  the Gaussian Wigner matrix.

{Next, we} define the notion of distinguishability and indistinguishability between two random $p$-tensors in terms of the total variation distance. For any two random $p$-tensors $U,V$, denote by $d_{TV}(U,V)$ the total variation distance between $U$ and $V$, that is,
\[
d_{TV}(U,V) = \sup_{A} |P(U \in A) - P(V \in A)|,
\]
where the supremum is taken over all sets $A$ in the Borel $\sigma$-algebra generated by symmetric $p$-tensors.

\begin{definition}\label{def1}
Let $U_N,V_N$ be two sequences of random $p$-tensors. We say that they are distinguishable if
$$\lim_{N\rightarrow\infty}d_{TV}(U_N,V_N)=1$$
and  are indistinguishable if
$$
\lim_{N\rightarrow\infty}d_{TV}(U_N,V_N)=0.
$$
\end{definition}

Distinguishability of $U_N$ and $V_N$ means that there exists a sequence of measurable subsets $A_N$ of $\Omega_N$ such that $\lim_{N\rightarrow\infty}\p(U_N\in A_N)=1$ and $\lim_{N\rightarrow\infty}\p(V_N\in A_N)=0.$ From this, if we consider a statistical test $S_N:\Omega_N\rightarrow \{0,1\}$ defined by $S_N(w)=0$ for $w\in A_N$ and $S_N(w)=1$ for $w\notin A_N,$ then as $N$ approaches infinity, the sum of type I and type II errors approaches zero:
\begin{align}
\label{eq:limiting_types}
\lim_{N\rightarrow\infty}\bigl(\p(S_N(U_N)=1)+\p(S_N(V_N)=0)\bigr)=\lim_{N\rightarrow\infty}\bigl(\p(U_N\notin A_N)+\p(V_N\in A_N)\bigr)=0.
\end{align}
This means that one can statistically distinguish $U_N$ and $V_N$ by the test $S_N.$  Furthermore, if $U_N$ and $V_N$ have nonvanishing densities $f_{U_N}$ and $f_{V_N}$, the well-known formula
\begin{align*}
d_{TV}(U_N,V_N)= \int_{f_{U_N} \geq f_{V_N}} (f_{U_N} - f_{V_N}) dw
\end{align*}
implies that
\begin{align*}
d_{TV}(U_N,V_N)=\p\bigl(U_N\in A_N\bigr)-\p\bigl(V_N\in A_N\bigr)
\end{align*}
for $$A_N:=\Bigl\{w\in \Omega_N \ \Big| \ \frac{f_{U_N}(w)}{f_{V_N}(w)}\geq 1\Bigr\}.$$ Therefore,  one can naturally use the likelihood ratio test to distinguish $U_N$ and $V_N$. In contrast, when $U_N$ and $V_N$ are indistinguishable, any statistical test is powerless as in this case the total error approaches one as $N$ tends to infinity.

\begin{remark}
	\rm The setting of spiked matrices (see, e.g., \cite{AKJ+17,WEM19})
	considers a weaker notion of distinguishability that requires the  limiting total error, which appears on the left hand side of \eqref{eq:limiting_types},
	to be less than 1. In our spiked tensor model with $p>2$, the limiting total error converges to either zero or one and this weaker notion of distinguishability coincides with ours.
\end{remark}

\subsection{Main Results for Detection of a Single Spike \label{sec2.2}}

Let $\Lambda$ be a bounded subset of $\mathbb{R}$ and $\mu$ be a probability measure on the Borel $\sigma$-field of $\Lambda$. Assume that $u_1,\ldots,u_N$ are i.i.d. samplings from $\mu$ and are independent of $W$. Denote $u=(u_1,\ldots,u_N).$ We refer to the random variable $u$ as the prior. Consider the spiked random $p$-tensor $T$ defined by
\begin{align}\label{spike}
T=W+\frac{\beta}{N^{(p-1)/2}} u^{\otimes p}.
\end{align}
We say that detection of the spike $u^{\otimes p}$ in $T$ is possible if $W$ and $T$ are distinguishable and detection is impossible if they are indistinguishable in the sense of Definition \ref{def1}.
Note that if $\int a\mu(da)\neq 0$, one can immediately detect the spike by noting that $Y_{i_1,\ldots,i_p}$ are i.i.d. standard Gaussian and using the strong law of large number. Indeed,
\[
\frac{1}{N^{(p+1)/2}} \sum_{i_1,\dots,i_p =1}^N W_{i_1, \dots, i_p} =\frac{1}{N^{(p+1)/2}}\sum_{i_1,\ldots,i_p=1}^NY_{i_1,\ldots,i_p}\rightarrow 0,
\]
while
$$
\frac{1}{N^{(p+1)/2}}\sum_{i_1,\ldots,i_p=1}^NT_{i_1,\ldots,i_p}=\frac{1}{N^{(p+1)/2}}\sum_{i_1,\ldots,i_p=1}^NY_{i_1,\ldots,i_p}+\beta\Bigl(\frac{\sum_{i=1}^Nu_i}{N}\Bigr)^p\rightarrow\beta\Bigl(\int a\mu(da)\Bigr)^p.
$$
We can thus restrict our discussion to the case when $\mu$ is centered, that is, when $\int_{\R} a \mu(da) = 0$. Our first result on spike detection is formulated as follows.

\begin{theorem}\label{thm1}
Assume that $\mu$ is centered.
For any $p\geq 3,$ there exists a constant $\beta_c>0$ such that
\begin{enumerate}
	\item [$(i)$] if $0<\beta<\beta_c$, then detection is impossible;
	\item [$(ii)$] if $\beta > \beta_c$, then detection is possible.
\end{enumerate}	
\end{theorem}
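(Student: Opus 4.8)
The plan is to express $d_{TV}(W,T)$ through a free energy and then analyze that free energy with tools from spin glass theory. Since the case $\int a\,\mu(da)\ne 0$ has been disposed of, $\mu$ is centered, and we may assume $\mu$ is not a point mass (otherwise $T=W$). Write $v=\int a^2\,\mu(da)>0$, let $P_W,P_T$ be the laws of $W,T$, and $L_N=dP_T/dP_W$ the likelihood ratio. A standard Gaussian density computation --- after renaming the prior $u$ as a dummy spin $\sigma$ sampled from $\mu^{\otimes N}$ --- gives
\[
L_N(W)=Z_N:=\E_\sigma\exp\Bigl(\beta H_N(\sigma)-\tfrac{\beta^2}{2}N\,q(\sigma)\Bigr),
\]
where $H_N(\sigma)=N^{-(p-1)/2}\langle W,\sigma^{\otimes p}\rangle$ is the pure $p$-spin Hamiltonian, a centered Gaussian process with $\E H_N(\sigma)H_N(\sigma')=N R(\sigma,\sigma')^p$, $R(\sigma,\sigma')=\frac1N\langle\sigma,\sigma'\rangle$, and $q(\sigma)=R(\sigma,\sigma)^p$; the normalization makes $\E_W Z_N=1$. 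Thus $F_N=\frac1N\log Z_N$ is the free energy of this $p$-spin model at inverse temperature $\beta$, and Lemma~\ref{dtv_free} rewrites $d_{TV}(W,T)$ as an explicit functional of the law of $F_N$. I would take $\beta_c$ to be the critical inverse temperature of this model; concretely one may set $\beta_c^2=\inf\{I(m)/m^p:m\ne 0,\ m^p>0\}$, the infimum over $m$ in the support of the law of $R(\sigma,\sigma')$, where $I(m)=\sup_t\{tm-\log\E_{a,b\sim\mu}e^{tab}\}$ is the Cram\'er rate function of that overlap. One checks $0<\beta_c<\infty$: finiteness is clear since $I(m)/m^p$ is finite at interior points, and positivity uses $I(m)\sim m^2/(2v^2)$ as $m\to 0$, so for $p\ge 3$ one has $I(m)/m^p\to\infty$ near $0$ and the infimum is attained at an $m$ bounded away from $0$, where $I(m)>0$.

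\emph{Part $(i)$: $0<\beta<\beta_c$.} I would use a second moment argument. Gaussianity of $H_N$ gives $\E_W Z_N^2=\E_{\sigma,\sigma'}\exp(\beta^2 N R(\sigma,\sigma')^p)$ with $\sigma,\sigma'$ independent from $\mu^{\otimes N}$. Split this expectation according to $|R(\sigma,\sigma')|\le\delta_N$ and $|R(\sigma,\sigma')|>\delta_N$, with $\delta_N\downarrow 0$ at an appropriate rate (e.g.\ $\delta_N=N^{-1/4}$). On the first event $|N R^p|\le\delta_N^{\,p-2}N R^2$; since $\sqrt N\,R$ is asymptotically Gaussian with bounded increments, hence uniformly sub-Gaussian, and $\E[N R^2]=v^2$, the contribution of this event tends to $1$. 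On the second event, Cram\'er's theorem bounds the contribution by (essentially) $\int_{|m|>\delta_N}e^{N(\beta^2 m^p-I(m))}\,dm$, which vanishes because $\beta<\beta_c$ forces $\beta^2 m^p<I(m)$ for all relevant $m\ne 0$ --- and $p\ge 3$ is exactly what keeps this strict uniformly down to $m=0$. Hence $\E_W Z_N^2\to 1=(\E_W Z_N)^2$, so $L_N\to 1$ in $L^2(P_W)$ and $d_{TV}(W,T)=\tfrac12\E_W|L_N-1|\le\tfrac12\|L_N-1\|_{L^2(P_W)}\to 0$; equivalently $F_N$ concentrates at $0$ and the functional of Lemma~\ref{dtv_free} vanishes. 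So detection is impossible. Relative to the Rademacher prior of \cite{Chen17}, the novelty is carrying out the large deviation and near-zero estimates for a general bounded prior, whose rate function $I$ and self-overlap $q$ are no longer explicit.

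\emph{Part $(ii)$: $\beta>\beta_c$.} Here I would show that the quenched free energy is strictly negative. First, $\phi(\beta)=\lim_N\frac1N\E_W\log Z_N$ exists (by Gaussian interpolation or superadditivity) and $\phi(\beta)\le 0$ by Jensen's inequality, with equality for $\beta<\beta_c$ by Part~$(i)$. The key claim --- the one identifying $\beta_c$ with the $p$-spin critical temperature --- is that $\phi(\beta)<0$ for every $\beta>\beta_c$: beyond $\beta_c$ the overlap under the Gibbs measure of $Z_N$ ceases to be concentrated at $0$ (the measure localizes, up to sign, near a distinguished configuration), which pushes the quenched free energy strictly below its annealed value $0$. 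This is where structural facts about the pure $p$-spin free energy, and the auxiliary-function description of the high-temperature region in Section~\ref{sec2.2}, are used, including ruling out any re-entrance into $\{\phi=0\}$. Granting $\phi(\beta)<0$: the map $w\mapsto\frac1N\log L_N(w)$ is Lipschitz on the space of tensors with constant $O(N^{-1/2})$ --- its gradient in $w$ is $\beta N^{-(p-1)/2}$ times a Gibbs average of $\sigma^{\otimes p}$, of norm $O(N^{1/2})$ since $\mu$ has bounded support --- so Gaussian concentration gives $\frac1N\log L_N(W)\to\phi(\beta)$ in $P_W$-probability. With $A_N=\{w:L_N(w)\ge e^{N\phi(\beta)/2}\}$ we then get $P_W(A_N)=P_W\!\bigl(\frac1N\log L_N\ge\tfrac12\phi(\beta)\bigr)\to 0$, since $\tfrac12\phi(\beta)>\phi(\beta)$ as $\phi(\beta)<0$, while $P_T(A_N)=\E_W[L_N\mathbbm{1}_{A_N}]=1-\E_W[L_N\mathbbm{1}_{A_N^c}]\ge 1-e^{N\phi(\beta)/2}\to 1$. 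Hence $d_{TV}(W,T)\ge P_T(A_N)-P_W(A_N)\to 1$, so detection is possible, in fact by the likelihood ratio test $\mathbbm{1}_{A_N}$.

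The step I expect to be hardest is the strict negativity $\phi(\beta)<0$ for all $\beta>\beta_c$ in Part~$(ii)$: this is precisely the assertion that the elementary second moment / large deviation threshold $\beta_c$ is already the sharp detection threshold and coincides with the critical inverse temperature of the pure $p$-spin model with prior $\mu$. Proving it requires genuine spin glass input --- free energy bounds together with non-concentration of the overlap beyond $\beta_c$ --- and, for a general bounded prior rather than the Rademacher prior of \cite{Chen17}, a careful reworking of the large deviation and entropy bookkeeping.
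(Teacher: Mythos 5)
Your high-level architecture (express $d_{TV}$ via the normalized free energy of a pure $p$-spin model, then show a free-energy phase transition) is the paper's architecture, but the identification of $\beta_c$ and the argument for part $(i)$ are not the paper's, and the gap there is real.

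\textbf{The proposed $\beta_c$ is the second-moment threshold, which is generically strictly smaller than the true critical temperature.} You set $\beta_c^2=\inf\{I(m)/m^p:m\ne0,\ m^p>0\}$ with $I$ the Cram\'er rate function of the overlap. This is exactly the value of $\beta$ at which the annealed second moment $\mathbb{E}_W Z_N^2=\mathbb{E}_{\sigma,\sigma'}e^{\beta^2NR(\sigma,\sigma')^p}$ blows up. But the paper's $\beta_c$ is defined by $F(\beta)=\lim_N N^{-1}\mathbb{E}\log Z_N=0$, and it is a standard feature of pure $p$-spin models that these two quantities differ. Already in the $p\to\infty$ (REM) limit for the Rademacher prior, the second-moment threshold is $\sqrt{\log 2}$ while the freezing temperature is $\sqrt{2\log 2}$. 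For finite $p\ge3$ a strict gap persists, and closing that gap was precisely what was open before \cite{Chen17,PWB17}; the paper's whole point (cf.\ the discussion in Section \ref{prev_res} of $\beta_-'$ versus $\beta_+'$) is that elementary first/second-moment bounds do not match $\beta_c$.

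\textbf{Consequently, both parts of the argument break in the intermediate regime.} Your proof of $(i)$ shows $\mathbb{E}_W Z_N^2\to1$ and hence $d_{TV}\to0$, but this holds only for $\beta$ below the second-moment threshold $\beta_2$; for $\beta\in[\beta_2,\beta_c)$ the annealed second moment diverges yet detection is still impossible, and your argument gives no conclusion there. And your ``key claim'' in $(ii)$, namely $\phi(\beta)<0$ for all $\beta>\beta_c$, is \emph{false} under your definition of $\beta_c$: the quenched free energy $\phi$ remains $0$ on all of $(\beta_2,\beta_c^{\mathrm{true}}]$, so the likelihood-ratio test you construct would not distinguish $W$ from $T$ in that regime. (The rest of part $(ii)$ --- Lipschitz/concentration of $N^{-1}\log L_N$ and the test $A_N=\{L_N\ge e^{N\phi/2}\}$ --- is fine, and essentially equivalent to the paper's dominated-convergence step, once one \emph{has} $F(\beta)<0$.)

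\textbf{What the paper does instead.} Part $(i)$ is proved not by a second-moment computation but by a sharp fluctuation bound $\mathbb{P}(|F_N(\beta)|\ge l)\le K/(l^2N^{p/2+1})$ (Proposition \ref{fluctuations}), valid for every $\beta<\beta_c$, which when fed into the integral formula \eqref{dtv_free:Eq1} gives $d_{TV}(W,T)\le (1+K)N^{-(p-2)/4}$. That fluctuation bound is deduced from overlap-moment control $\mathbb{E}\langle|R_{1,2}|^{2m}\rangle_\beta\le K N^{-m}$ (Theorem \ref{momentcontrol}), which is proved by a cavity argument seeded by an exponential-tail overlap concentration (Proposition \ref{new:lem0}), which in turn uses a Guerra--Talagrand 1RSB bound on the coupled free energy together with the Parisi-formula description of the high-temperature regime (Proposition \ref{prop1}, using Lemma \ref{lem3} on strict monotonicity of $\gamma_\beta$). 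It is exactly these RSB-level tools, not a large-deviation/Cram\'er computation, that push the indistinguishability argument all the way to $\beta_c$. The characterization $\mathcal{R}=(0,\beta_c]$ and the formula for $\beta_c$ via the auxiliary function $\Gamma_\beta$ (Proposition \ref{prop1}, Theorem \ref{thm2.1}) then supply the $(ii)$-direction input you flagged as ``hardest.'' If you want to repair the proposal, you must drop the second-moment definition of $\beta_c$ and instead work with $F(\beta)=0$ as the defining property, then replace the second-moment computation in $(i)$ by a free-energy fluctuation estimate that holds throughout $(0,\beta_c)$.
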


In other words, $\beta_c$ is the critical threshold that describes the phase transition of the detection problem.  As we explained in Section \ref{sec2.1}, when detection is possible, one can use the likelihood ratio test, which uses the ratio of densities $f_{T}(w)/f_W(w)$, to distinguish between $W$ and $T$. In Lemma~\ref{dtv_free} below, we relate this ratio to the free energy of the pure $p$-spin mean field spin glass model.

The precise value of $\beta_c$ can be determined as follows. Let $$
\xi(s)=s^p$$ and
\begin{align}
\label{q1}
v_{*}=\int a^2\mu(da).
\end{align}
For $a\in \mathbb{R}$ and $t>0$, consider the geometric Brownian motion $$Z(a,t) = \exp\Bigl(a B_t-\frac{a^2t}{2}\Bigr),$$
where $B_t$ is a standard Brownian motion.
For $b \geq 0$, define an auxiliary function $\Gamma_b(v)$ on $[0, \infty)$ by
\begin{align}\label{add:eq1}
\Gamma_b(v)&=\int_0^{v}\xi''(s)(\gamma_b(s)-s)ds,
\end{align}
where for $s\geq 0,$
\begin{align}
\gamma_{b}(s):= \mathbb{E}\left[\frac{\left(\int  a Z(a,b^2 \xi'(s))\mu(da)\right)^2}{\int  Z(a,b^2 \xi'(s))\mu(da)}\right]. \label{auxfun}
\end{align}
The critical value $\beta_c$ in Theorem \ref{thm1} can be calculated as follows:
\begin{theorem}\label{thm2.1}
If $p\geq 3$ and $\mu$ is centered, then $\beta_c$ is the largest $b$ such that $\sup_{v\in (0,v_*]}\Gamma_b(v)=0$.
\end{theorem}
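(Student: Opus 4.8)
Write $\beta^{*}$ for the largest $b\ge 0$ such that $\sup_{v\in(0,v_*]}\Gamma_b(v)=0$; that such a largest $b$ exists is itself part of what must be checked. Since Theorem~\ref{thm1} already produces a \emph{single} number $\beta_c$ below which detection is impossible and above which it is possible, the assertion reduces to three claims: (a) $0<\beta^{*}<\infty$ and $\sup_{v\in(0,v_*]}\Gamma_{\beta^{*}}(v)=0$, so that $\beta^{*}$ is indeed a maximum; (b) detection is impossible for every $\beta<\beta^{*}$; (c) detection is possible for every $\beta>\beta^{*}$. Then (b) forces $\beta_c\ge\beta^{*}$, (c) forces $\beta_c\le\beta^{*}$, hence $\beta_c=\beta^{*}$, which is the statement. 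The bridge for (b) and (c) is Lemma~\ref{dtv_free}: it expresses $d_{TV}(W,T)$ as an explicit integral against the distribution function of the log-likelihood ratio $X_N=\log(f_T/f_W)$, which equals $N$ times the gap between the normalized free energy $F_N$ of the pure $p$-spin model---whose spin is one $\mu$-distributed coordinate and whose disorder is $W$---and its annealed value $a_N$. In these terms detection is impossible exactly when $X_N\to 0$ in probability, and possible exactly when $X_N\to-\infty$ in probability.

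For (a), the first step is to record that $b\mapsto\gamma_b(s)$ is continuous and nondecreasing for each fixed $s$; this is transparent once one recognizes $\gamma_b(s)$ as $v_*$ minus the minimum mean square error of recovering $a\sim\mu$ from the scalar Gaussian channel of signal-to-noise ratio $b\xi'(s)$, since raising $b$ raises the SNR and lowers the error. Monotonicity and continuity pass to $\Gamma_b(v)=\int_0^v\xi''(s)(\gamma_b(s)-s)\,ds$ and to $b\mapsto\sup_{v\in(0,v_*]}\Gamma_b(v)=\max_{v\in[0,v_*]}\Gamma_b(v)$, so $\{b:\sup_v\Gamma_b(v)=0\}=\{b:\Gamma_b\le 0\text{ on }(0,v_*]\}$ is a closed interval $[0,\beta^{*}]$. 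That $\beta^{*}>0$: because $\xi'(0)=0$ (we are in the case $p\ge 3$) and $\mu$ is centered, a Taylor expansion of $Z(a,t)$ at $t=0$ gives $\gamma_b(s)=b\xi'(s)v_*^{2}+O((b\xi'(s))^{2})$ uniformly on $(0,v_*]$, hence $\gamma_b(s)<s$ there and $\Gamma_b<0$ on $(0,v_*]$ for $b$ small. That $\beta^{*}<\infty$: as $b\to\infty$ the above error tends to $0$, so $\gamma_b(s)\uparrow v_*$ for every $s>0$ and $\Gamma_b(v_*)\to\int_0^{v_*}\xi''(s)(v_*-s)\,ds>0$, so $\sup_v\Gamma_b(v)>0$ for $b$ large.

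For (c), fix $\beta>\beta^{*}$ and pick $q^{*}\in(0,v_*]$ with $\Gamma_\beta(q^{*})>0$. Guerra's replica-symmetric interpolation bound, evaluated at the single overlap value $q^{*}$, gives $\E[F_N]\le a_N-c_0\,\Gamma_\beta(q^{*})+o(1)$ for a positive constant $c_0$; thus the quenched free energy sits a fixed distance below the annealed one, i.e.\ $\E[X_N]\le-cN$ with $c=c(\beta)>0$. Since $F_N$ is a Lipschitz function of the Gaussian disorder $W$ with Lipschitz constant $O(N^{-1/2})$, Gaussian concentration gives $\Var(F_N)=O(1/N)$, hence $\Var(X_N)=O(N)$, negligible against $(cN)^2$; therefore $X_N\to-\infty$ in probability and $d_{TV}(W,T)\to 1$ by Lemma~\ref{dtv_free}.

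Statement (b) carries the real work and is the main obstacle. Its easy half is $\E[F_N]\le a_N$, which is Jensen's inequality and yields $\E[X_N]\le o(1)$. The hard half is the matching lower bound \emph{with a sharp rate}: one needs $\E[X_N]\ge -o(1)$, in fact $X_N=O_{\p}(N^{-p/2})$ (so $F_N$ departs from $a_N$ by only $O(N^{-(p/2+1)})$, as advertised), which is enough to conclude $X_N\to 0$ in probability and hence $d_{TV}(W,T)\to 0$. The naive second moment method does not suffice, since the second moment of $f_T/f_W$ is generically \emph{unbounded} on part of the range $\beta<\beta^{*}$ (a rare event on which the ratio is enormous), so one must genuinely use that $\gamma_b(s)\le s$ \emph{for every} $s\in(0,v_*]$, not merely near a critical point. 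The plan is to run an Aizenman--Sims--Starr / cavity computation---equivalently a Guerra-type interpolation toward the decoupled one-dimensional model governed by $Z(a,b\xi'(s))$---whose remainder takes the shape $\int_0^{v_*}\xi''(s)(\gamma_b(s)-s)\,\E\langle\,\cdot\,\rangle\,ds$; feeding in the pointwise sign $\Gamma_b\le 0$ together with an a priori concentration of the overlap $R_{1,2}=\langle u^1,u^2\rangle/N$ about its centered mean $0$, and a large-deviation bound for $R_{1,2}$ weighed against $\xi$ to suppress atypically large overlaps, makes this remainder negligible; a careful accounting of the Gaussian fluctuations along the interpolation then turns the resulting $o(1)$ into the claimed power of $N$. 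Combining (a), (b) and (c) gives $\beta_c=\beta^{*}$, which is Theorem~\ref{thm2.1}.
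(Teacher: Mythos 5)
Your plan takes a genuinely different route from the paper, but several key steps are left unproved. In the paper, Theorem~\ref{thm2.1} is an immediate corollary of Proposition~\ref{prop1}, which characterizes the high-temperature regime $\mathcal R=\{\beta:F(\beta)=0\}$ as $\{\beta:\sup_{v\in(0,v_*]}\Gamma_\beta(v)\le 0\}$. That proposition is proved by a variational analysis of the Parisi formula for $F(\beta)$ (Proposition~\ref{new:parisiformula:k=1}): the condition $\Gamma_\beta\le 0$ on $(0,v_*]$ is identified as the first-order optimality condition for the replica-symmetric point $(\alpha_{v_*},\lambda_*)$ of the Parisi functional $\mathcal Q_{\beta,v_*}$, hence equivalent to $F(\beta)=0$; together with Lemma~\ref{lem3} (strict monotonicity of $\gamma_\beta$, hence of $\Gamma_\beta$, in $\beta$) this gives $\mathcal R=(0,\beta_c]$ and the stated formula for $\beta_c$. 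You instead define $\beta^*$ directly from the formula and propose to reprove the two sides of the detection phase transition at $\beta^*$, bypassing the Parisi formula entirely.

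The gaps are the following. For step (c), the asserted Guerra bound $\E F_N\le a_N-c_0\Gamma_\beta(q^*)+o(1)$ is not justified. For a general bounded prior $\mu$ the self-overlap $R(\sigma,\sigma)$ fluctuates, so any interpolation upper bound carries a supremum over a self-overlap constraint $v$ (this is the $\sup_v$ in Proposition~\ref{new:parisiformula:k=1}); ruling out that this supremum returns to $0$ is precisely what Lemma~\ref{new:lem3} handles, and you do not address it. Moreover, the paper does not evaluate the RS ansatz at overlap $q^*$: it computes the directional derivative of $\mathcal Q_{\beta,v_*}$ at $(\alpha_{v_*},\lambda_*)$ in the 1RSB direction with breaking point $q^*$ and finds it equal to $-\frac{\beta^2}{2}\Gamma_\beta(q^*)$. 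This is a one-sided first-order condition; by convexity it yields a \emph{lower} bound on the Parisi functional along the perturbation path, not an upper bound, and whether your formula holds as an equality for general $p$ and $\mu$ is unverified. For step (b), you explicitly present only a plan. Filling it in would reproduce the overlap concentration up to $\beta^*$ via the Guerra--Talagrand 1RSB bound (Propositions~\ref{prop2} and \ref{new:lem0}) and then the cavity-method moment control and fluctuation bound (Theorem~\ref{momentcontrol} and Proposition~\ref{fluctuations}); these are the paper's main technical results, so (b) is a gap the size of several sections. Your step (a) is fine, and the MMSE reading of $\gamma_b$ is a clean alternative to the It\^o/submartingale proof of Lemma~\ref{lem3}. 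But without (b) and (c) established, the proposal is a roadmap, and a substantially longer one than the paper's direct deduction from the Parisi-formula characterization of $\mathcal R$.
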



As an example of Theorem \ref{thm2.1}, we demonstrate numerical simulations for estimating the critical threshold $\beta_c$ for the sparse Rademacher prior, in which the entries $u_1,\ldots,u_N$ in $u$ are i.i.d. sampled from the probability distribution
$$
\frac{\rho}{2}\delta_{-\frac{1}{\sqrt{\rho }}} + (1-\rho)\delta_0 + \frac{\rho}{2}\delta_{\frac{1}{\sqrt{\rho }}},
$$
with parameter $\rho\in (0,1]$ that controls the sparsity of the prior.
The case $\rho=1$ corresponds to the usual Rademacher prior.
If $\rho<1,$ the sparse Rademacher prior can be regarded as first uniformly sampling  approximately $\rho N$ of the coordinates and then for these coordinates, sampling Bernoulli $\pm 1/\sqrt{\rho}$ random variables with equal probability. The remaining approximately $(1-\rho)N$ coordinates are set to zero. From this construction, the second moment of $\|u\|/\sqrt{N}$ is of order 1. To simulate $\beta_c$ according to the value established in Theorem~\ref{thm2.1}, we numerically evaluate $\Gamma_{b}(v)$ for test values of $v$ with increments $.001$ in the interval between 0 and $v_*=1$. For this purpose, we have used the numerical integrator of Mathematica. The critical value $\beta_c$ is the largest value $b$ such that $\Gamma_{b}(v) \leq 0$ for all test values of $v$, where discrete positive values of $b$ with increments $0.001$ were tested. Figure \ref{prob_sparse_Rad} summarizes the numerical results for $p=3$, 4, 5, 10 and $\rho = 0.1$, $0.2,$ $\dots$, 1.

The behavior of  $\beta_c$ is influenced by the proportion of zeros and the magnitude of the nonzero jumps.  As can be seen, in each of the four figures there exists a threshold $\rho_*$ (depending on $p$) such that $\rho\mapsto\beta_c$ is increasing on $[0,\rho_*]$ and decreasing on $[\rho_*,1]$. Heuristically, in the interval $[0,\rho_*)$, the large fraction of the zeros dominates the small proportion of far jumps, whose magnitude $1/\sqrt{\rho}$ is large. On the other hand, in the interval $(\rho_*,1]$, the  far jumps overpower the small fraction of zeros and their magnitude has relatively low variation with $\rho$.  In each subfigure of Figure \ref{prob_sparse_Rad}, we indicate by a solid curve the following upper bound for $\beta_c$, which was pointed out in \cite{PWB17},
$$
H(\rho):=\sqrt{2\bigl(-\rho \log \rho - (1-\rho) \log(1-\rho)+\rho \log 2\bigr)}.
$$
We note that as $p$ increases the estimated values of $\beta_c$ are closer to the ones of the upper bound $H(\rho)$. For $p=3$, $4$, $5$, we see that if $\rho$ is sufficiently small, then $H(\rho)$ is still a good approximation for $\beta_c$.

\begin{figure}[!htb]
	\centering
	\begin{subfigure}[t]{0.45\textwidth}
		\centering
		\includegraphics[width=\textwidth]{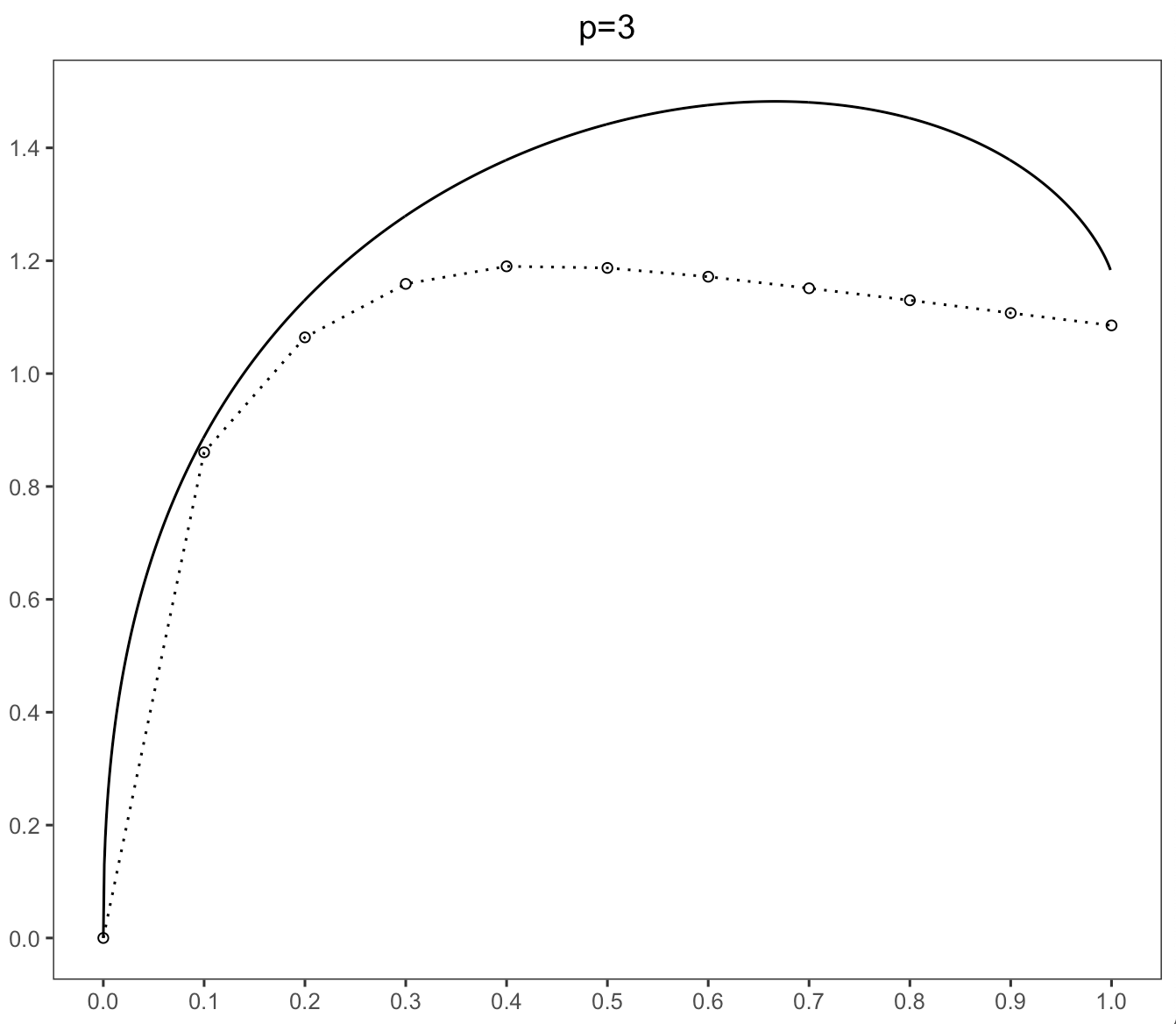}
	\end{subfigure}
	\begin{subfigure}[t]{0.45\textwidth}
		\centering
		\includegraphics[width=\textwidth]{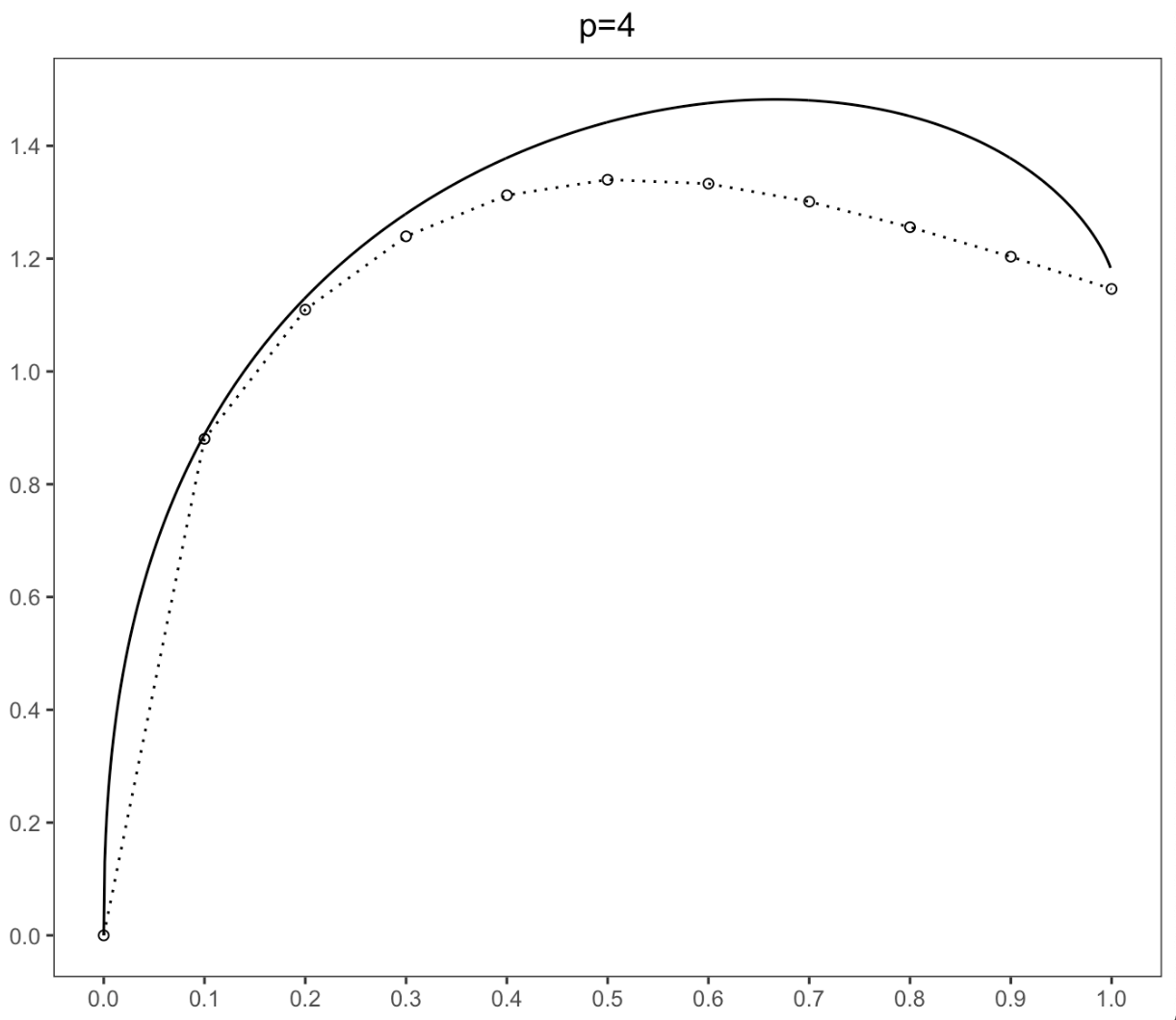}
	\end{subfigure}
	\vskip\baselineskip
	\begin{subfigure}[b]{0.45\textwidth}
		\centering
		\includegraphics[width=\textwidth]{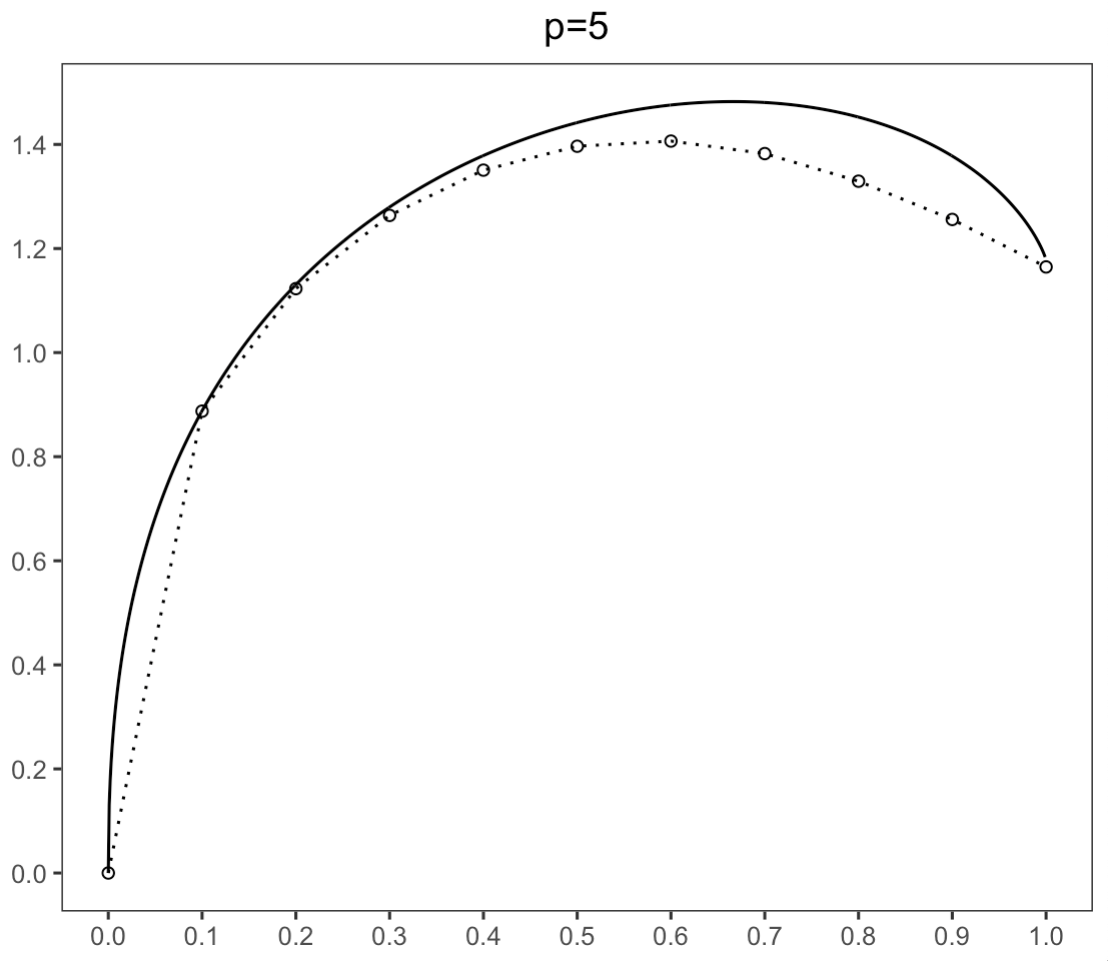}
	\end{subfigure}
	\quad
	\begin{subfigure}[b]{0.45\textwidth}
		\centering
		\includegraphics[width=\textwidth]{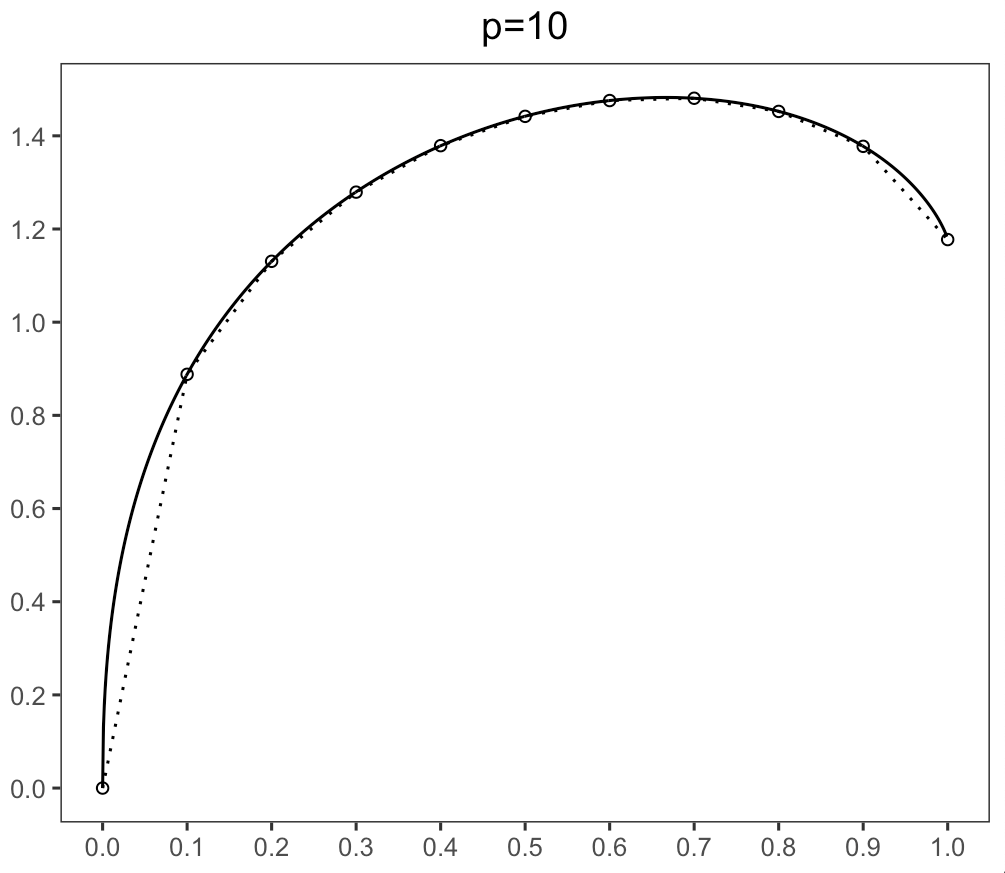}
	\end{subfigure}
	\caption{Numerical simulations for the critical value $\beta_c$ with sparse Rademacher prior and various values of $p$. The top left plot is for $p=3$, the top right for $p=4$, the bottom left for $p=5$ and the bottom right for $p=10$.  The open circles are the simulated critical values $\beta_c$. The dashed curve interpolates between these points and the solid curve describes the function $H(\rho)$.}
	\label{prob_sparse_Rad}
\end{figure}

\subsection{Main Results for  Detection of Multiple Spikes }\label{sec2.3}
In this subsection, we study the case of more than one spike. Denote the number of spikes by $k$.
Let $\Lambda_1,\ldots,\Lambda_k$ be bounded subsets of $\mathbb{R}$ and $\mu_1,\ldots,\mu_k$ be centered probability measures on the Borel $\sigma$-fields of $\Lambda_1,\ldots,\Lambda_k$, respectively.
For any $1\leq r\leq k,$ let $u_1(r),\ldots,u_N(r)$ be i.i.d. samplings from $\mu_r$ and set
$
u(r)=(u_1(r),\ldots,u_N(r)).
$
We assume that $u(1),\ldots,u(k)$ are independent of each other and of $W$.
For $\bar \beta=(\beta_1,\ldots,\beta_k)$ with $\beta_1,\ldots,\beta_k>0,$ the spiked tensor $T_k$ is defined by
\begin{align}\label{spikes}
T_k=W+\frac{1}{N^{(p-1)/2}}\sum_{r=1}^k \beta_ru(r)^{\otimes p}.
\end{align}
In a manner similar to the previous subsection, we say that detection is possible if $W$ and $T_k$ are distinguishable and is impossible if they are indistinguishable. For $1\leq r\leq k,$ denote by $\beta_{r,c}$ the critical threshold obtained by plugging $\mu_r$ into Theorem~\ref{thm2.1}. We extend Theorem \ref{thm1} to the case of multiple spikes as follows.

\begin{theorem}\label{maintheorem}
Assume that $\mu_1,\ldots,\mu_k$ are centered. For $p\geq 3$, the following statements hold.
	\begin{itemize}
		\item[$(i)$] If $\bar \beta\in (0,\beta_{1,c})\times\cdots\times (0,\beta_{k,c})$, then detection is impossible;
		\item[$(ii)$] If $\bar \beta \notin (0,\beta_{1,c}]\times\cdots\times (0,\beta_{k,c}]$,  then detection is possible.
	\end{itemize}
\end{theorem}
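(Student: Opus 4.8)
The plan is to reduce the multi-spike detection problem to the analysis of a vector-valued pure $p$-spin free energy, exactly as in the single-spike case, and then to leverage the single-spike thresholds $\beta_{1,c},\dots,\beta_{k,c}$ coordinate by coordinate. The first step is to establish the analogue of Lemma \ref{dtv_free}: writing $\xi(s)=s^p/2$ and expanding the Gaussian density ratio $f_{T_k}/f_W$, one finds that this ratio is precisely a partition function over the product configuration space $\Lambda_1^N\times\cdots\times\Lambda_k^N$, where each replica $\bs=(\bs^{(1)},\dots,\bs^{(k)})$ is weighted by the prior product $\mu_1^{\otimes N}\otimes\cdots\otimes\mu_k^{\otimes N}$ and the Hamiltonian is a Gaussian process whose covariance is governed by $\sum_{r,r'}\beta_r\beta_{r'}\xi(R(\bs^{(r)},\bt^{(r')}))$ with $R$ the normalized overlap. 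Then $d_{TV}(W,T_k)$ is an integral against the distribution function of the associated free energy $F_N(\bar\beta)$, and the whole problem becomes: show $F_N(\bar\beta)\to 0$ in an appropriate sense (with fluctuations of order $N^{-(p/2+1)}$) in the high-temperature region, and $F_N(\bar\beta)$ of order $1$ outside it.

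For part $(i)$, when $\bar\beta$ lies strictly in the open box $\prod_r(0,\beta_{r,c})$, I would run a second-moment / Gaussian-interpolation argument on the free energy. The key structural fact is that the Hamiltonian decomposes across spikes up to cross terms, and because each $\mu_r$ is centered, the cross terms $\beta_r\beta_{r'}\xi(R(\bs^{(r)},\bt^{(r')}))$ are subleading when both $\beta_r<\beta_{r,c}$ and $\beta_{r'}<\beta_{r',c}$ — this is where the centering hypothesis and the definition of $\beta_{r,c}$ via $\sup_{v\in(0,v_*]}\Gamma_b(v)=0$ in Theorem \ref{thm2.1} enter. Concretely I expect to control the second moment of the partition function by a sum over pairs of replicas of a quantity that, after the Latala–Guerra / Parisi-type variational analysis, is nonpositive precisely on $\prod_r(0,\beta_{r,c})$; one then upgrades the bounded second moment to the vanishing of $d_{TV}$ via a truncation on the overlap (removing the atypical event where some $R(\bs^{(r)},\bt^{(r)})$ is macroscopic), as in Chen \cite{Chen17}. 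The fluctuation estimate of order $N^{-(p/2+1)}$ needed for the total variation integral would come from a concentration inequality for $F_N$ combined with the second-moment bound.

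For part $(ii)$, suppose $\bar\beta\notin\prod_r(0,\beta_{r,c}]$, so some coordinate, say $\beta_{r_0}>\beta_{r_0,c}$. Here I would exhibit an explicit statistic that separates $W$ from $T_k$: since the other spikes only add independent centered noise-like contributions, one can project onto the $r_0$-th spike direction and reduce to the single-spike supercritical case handled by Theorem \ref{thm1}$(ii)$. More carefully, restricting the likelihood ratio to configurations supported on the $r_0$-th coordinate block gives a lower bound on $F_N(\bar\beta)$ by the single-spike free energy at $\beta_{r_0}$, which is of order $1$ above $\beta_{r_0,c}$; monotonicity of $d_{TV}$ in the SNRs (or a direct contiguity/second-moment-blows-up argument) then yields distinguishability. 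The main obstacle I anticipate is \emph{the cross terms in the vector-valued model}: decoupling the $k$ spikes in the second-moment computation is not automatic, since the overlap matrix $(R(\bs^{(r)},\bt^{(r')}))_{r,r'}$ couples all coordinates, and one must show the off-diagonal overlaps are negligible under the tilted measure — this requires a multi-dimensional Latala–Guerra inequality and careful bookkeeping to ensure the variational problem genuinely factorizes over $r$ on the open box, which is the technical heart of extending \cite{Chen17} to $k\ge 2$.
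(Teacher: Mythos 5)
Your reduction to the vector-valued free energy and your identification of the cross-overlap terms $R(\s(r),\s(r'))$ as the technical heart are both on target, but the route you sketch for part $(i)$ would not deliver the full theorem, and your argument for part $(ii)$ points the key inequality the wrong way. For part $(i)$, the paper never attacks the vector-valued model directly. It instead uses the elementary subadditivity $d_{TV}(W,W+Y_1+Y_2)\leq d_{TV}(W,W+Y_1)+d_{TV}(W,W+Y_2)$ for $Y_1,Y_2$ independent of each other and of $W$ (Lemma \ref{lem2}); setting $T_{k,r}=W+\beta_r N^{-(p-1)/2}u(r)^{\otimes p}$ this yields $d_{TV}(W,T_k)\leq\sum_{r=1}^k d_{TV}(W,T_{k,r})$, and each summand is $O(N^{-(p-2)/4})$ by the single-spike estimate coming out of Proposition \ref{fluctuations}. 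This is not merely a shortcut: the vector-valued fluctuation bound that your second-moment/interpolation approach would need is exactly Theorem \ref{momentcontrol2}, which the paper proves only for \emph{even} $p$, precisely because when $p$ is odd the cross terms $R(\s(r),\s(r'))^p$ can be negative and one cannot dominate the coupled Hamiltonian by the marginal ones (see Remark \ref{rmk2}). Your route, as stated, would be stuck at odd $p$, whereas the subadditivity argument works for all $p\geq 3$.

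For part $(ii)$, to conclude distinguishability one needs $F(\bar\beta)<0$, which requires an \emph{upper} bound of the form $F(\bar\beta)\leq\sum_{r}F_r(\beta_r)\leq F_{r_0}(\beta_{r_0})<0$, not a lower bound. Moreover ``restricting the likelihood ratio to configurations supported on the $r_0$-th coordinate block'' is not a well-defined operation on $F_N(\bar\beta)$: the integral is over the full product space $\bar\Lambda^N$, every configuration $\bar\s$ carries all $k$ blocks, and $0$ need not even lie in $\Lambda_r$; at best such a restriction would give a useless lower bound. The paper obtains the needed upper bound in Theorem \ref{thm6} by showing the self-overlap matrix $\mathbf{R}(\bar\s)$ concentrates around the diagonal matrix $V_*$ (Proposition \ref{new:lem2}), after which the off-diagonal terms in $H_{N,\bar\beta}$ can be peeled off with $O(\varepsilon)$ error and the Hamiltonian factorizes over $r$. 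The fallback you mention is also shaky: monotonicity of $d_{TV}$ in the SNR vector is not evident (there is no data-processing map stripping the other spikes from $T_k$), and a diverging second moment of the likelihood ratio does not by itself give $d_{TV}\to 1$.
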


Theorem \ref{maintheorem} implies that in order to detect the spikes, at least one of the $\beta_r$'s has to exceed its own marginal critical threshold $\beta_{r,c}$. In particular, if all probability measures are the same, that is, $
\mu_1=\cdots=\mu_k,
$
then the above result implies that $W$ and $T_k$ are indistinguishable if $\max_{1\leq r\leq k}\beta_r<\beta_c$ and are distinguishable if $\max_{1\leq r\leq k}\beta_r>\beta_c$, where $\beta_c$ is the common threshold for all components. 

\begin{remark}
	\rm The first statement of Theorem \ref{maintheorem} directly follows from Theorem \ref{thm1} and a triangle inequality for the total variation distance, which is formulated in Lemma \ref{lem2}. The second statement of Theorem \ref{maintheorem} is nontrivial and requires a thorough study of the high temperature regime of a spin glass model (see Section \ref{hd}).
\end{remark}

It is natural to ask whether this critical threshold $\beta_c$ would change if one allows $k$ to grow  with $N.$ We show that this is not the case if the growth of $k=k(N)$ is of certain polynomial order, which is sufficiently slow in comparison to the size of the $p$-tensor, $N^p.$
To state our result, let $\mu$ be the probability measure considered in Section \ref{sec2.2} and let $\beta_c$ be the corresponding critical value provided by Theorem \ref{thm2.1}. Assume that $\mu_r=\mu$ for all $r\geq 1$ and that $(\beta_r)_{r\geq 1}$ is a sequence of SNRs satisfying $\sup_{r\geq 1}\beta_r<\infty.$ Let $T_k$ be the random tensor in \eqref{spikes} with $\mu_r$ and $\beta_r$ for $1\leq r\leq k.$

\begin{theorem}\label{maintheorem2}
Assume that $p\geq 3$ and $k=k(N)$ satisfies
$$
\lim_{N\to\infty}k(N)=\infty\,\,\mbox{and}\,\,\lim_{N\rightarrow\infty}\frac{k(N)}{N^{(p-2)/4}}=0.
$$
We have that
\begin{enumerate}
	\item[$(i)$] Detection is impossible if $\sup_{r\geq 1}\beta_r<\beta_c$.
	\item[$(ii)$] Detection is possible if either of the following two assumptions is satisfied:
	\begin{itemize}
		\item [$(a.1)$] There exists a fixed $k_0\in \mathbb{N}$ such that $\beta_r>\beta_c$ for at least one $r\leq k_0$ and $\sup_{r> k_0}\beta_r<\beta_c$.
		\item [$(a.2)$] $p$ is even and there exists a fixed $k_0\in \mathbb{N}$ such that $\inf_{r\geq k_0}\beta_r>\beta_c.$
	\end{itemize}
\end{enumerate}
\end{theorem}

\begin{remark}\rm 
We expect that detection is possible if at least one of the $\beta_r$'s exceeds $\beta_c$. However, our proof requires more restrictive conditions for technical reasons.
Note that condition (a.1) excludes the case of infinitely many $\beta_r$'s above $\beta_c$. On the other hand, condition (a.2) does not exclude this case, but requires $p$ to be even.
\end{remark}

As the number of independent spikes grows in $N$, it seems reasonable to believe that the critical threshold $\beta_c$ should become smaller since now we have more spikes and it should be relatively easier to detect them in comparison to the case of  a fixed finite number of spikes. However, Theorem~\ref{maintheorem2} presents a counterintuitive result that if the total number of spikes is of smaller order than $N^{(p-2)/4}$, then the critical threshold remains unchanged. 
In particular, if we let $\beta_r = \tilde{\beta}/k$ for all $1 \leq r \leq k$
and use the average of the spikes, $U_k = k^{-1} \sum_{r=1}^k u(r)^{\otimes p}$, then we can write
\[
T_{k(N)} = W + \frac{\tilde{\beta}}{N^{(p-1)/2}} U_{k(N)}.
\]
If $k=k(N)$ satisfies the growth conditions above, Theorem \ref{maintheorem2} says that detection is impossible if $\tilde{\beta} < k\beta_c$ and detection is possible when $\tilde{\beta} > k\beta_c$ and $p$ is even. Interestingly, this growth rate of $\tilde\beta$ required for detection, matches some recent results about algorithmic thresholds for spike recovery. In \cite{MR+14}, Montanari and Richard established recovery via the tensor unfolding up to the threshold $N^{(\lceil p/2\rceil-1)/2}$ and predicted that the optimal threshold should be $N^{(p-2)/4}.$ In addition, they obtained recovery via the tensor power iteration up to the threshold $N^{(p-1)/2}$ and conjectured that the true threshold for the power iteration and the Approximate Message Passing algorithm is $N^{(p-2)/2}$ with no comment on the initialization. In \cite{BGJ+18}, Ben Arous-Gheissari-Jagannath studied results for the Langevin dynamics and gradient descent and they gave recovery guarantees when $\tilde\beta > N^{(p-2)/2}$ for spherical  and spin glass initial data. It was also known that the degree 4 sum-of-squares algorithm  \cite{hopkins2015tensor} and a related spectral algorithm  \cite{hopkins2016fast} (for $p=3$) have sharp recovery threshold $N^{(p-2)/4}$. More recently,  a hierarchy of spectral methods following Kikuchi free energy has been proposed in the spiked tensor model \cite{WEM19}, where it was shown that  as long as the order parameter $\ell$ satisfies $\ell=o(N)$, then strong recovery and detection hold whenever $\tilde\beta \gg \ell^{-(p-2)/4}N^{(p-2)/4}\sqrt{\log N}.$

\subsection{Byproduct: Result for Recovery by MMSE}
\label{sec2.4}



Recall the settings of Section~\ref{sec2.3}. Let $\hat \theta = (\hat\theta_{i_1,\dots, i_p})$ be a $\R^{N^p}$-valued bounded random variable generated by the $\sigma$-field $\sigma(T_k)$. We also allow the random variable $\hat \theta$ to be dependent on other randomness that are independent of the $u_i(r)$'s and $T_k$.
The minimum mean square error (MMSE) is defined by
\begin{align}\label{MMSE}
\MMSE_N(\bar \beta):=\min_{\hat\theta}\frac{1}{N^p}\sum_{1\leq i_1,\ldots,i_p\leq N}\e\Bigl(\sum_{r=1}^k\beta_ru_{i_1}(r)\cdots u_{i_p}(r)-\hat\theta_{i_1,\ldots,i_p}\Bigr)^2,
\end{align}
where the minimum is taken over all such $\hat \theta$. The minimizer to this problem is attained by the minimum mean square estimator,
\begin{align*}
\hat\theta^{\MMSE}&=\Bigl(\sum_{r=1}^k\beta_r\e [u_{i_1}(r)\cdots u_{i_p}(r)|T_k]\Bigr)_{1\leq i_1,\ldots,i_p\leq N}.
\end{align*}
By restricting the minimum in the definition of $\MMSE_N(\bar \beta)$ to the  so-called dummy estimators \cite{LM+16}, i.e., estimators where $\hat\theta$ is independent of $T_1,\ldots,T_k$, one obtains a trivial upper bound:
\begin{align*}
\MMSE_N(\bar \beta)\leq \frac{1}{N^p}\sum_{1\leq i_1,\ldots,i_p\leq N}\Bigl(\e\Bigl(\sum_{r=1}^k\beta_ru_{i_1}(r)\cdots u_{i_p}(r)\Bigr)^2-\Bigl(\e\Bigl[\sum_{r=1}^k\beta_ru_{i_1}(r)\cdots u_{i_p}(r)\Bigr]\Bigr)^2\Bigr).
\end{align*}
Denote $v_{r,*}:=\int a^2\mu_r(da)$ for $1 \leq r \leq k$. Applying the strong law of large numbers yields
\begin{align*}
\limsup_{N\rightarrow\infty}\MMSE_N(\bar \beta)&\leq \DMSE(\bar \beta):=\sum_{r=1}^k\beta_r^2v_{r,*}^p.
\end{align*}
Note that from the Gaussianity of $Y$, it can be directly checked that conditionally on $T_k,$ the distribution of $(u(1),\ldots,u(k))$ can be described by a Gibbs measure $G_N^A$ on $\Lambda_1^N\times\cdots\times \Lambda_k^N$, see \eqref{add:eq-5} below. Denote by $(\sigma(1),\ldots,\sigma(k))$  a sampling from $G_N^A$ and by $\la \cdot\ra^A$ the expectation associated to this measure. 
We show that 

\begin{theorem}\label{thm0}
For $p\geq 3$, the following statements hold.
	\begin{itemize}
		\item[$(i)$] If $\bar \beta\in (0,\beta_{1,c})\times\cdots\times (0,\beta_{k,c})$, then $$
		\lim_{N\rightarrow\infty}\MMSE_N(\bar \beta)=\DMSE(\bar \beta)\quad\mbox{and}\quad
		\lim_{N\to\infty}\sum_{r,r'=1}^k\beta_r\beta_{r'}\e \Bigl\la \Bigl(\frac{1}{N}\sum_{i=1}^Nu_i(r)\sigma_i(r')\Bigr)^p\Bigr\ra^A=0.$$
		\item[$(ii)$] If $\bar \beta\notin (0,\beta_{1,c}]\times\cdots\times (0,\beta_{k,c}]$, then
	$$
	\limsup_{N\rightarrow\infty}\MMSE_N(\bar \beta)<\DMSE(\bar \beta)\quad\mbox{and}\quad
	\liminf_{N\to\infty}\sum_{r,r'=1}^k\beta_r\beta_{r'}\e \Bigl\la \Bigl(\frac{1}{N}\sum_{i=1}^Nu_i(r)\sigma_i(r')\Bigr)^p\Bigr\ra^A>0.$$
	\end{itemize}
	
\end{theorem}

This theorem asserts that if the SNRs of all marginal spikes are less than their critical thresholds, then when estimating for the tensor $\sum_{r=1}^k\beta_ru(r)^{\otimes p}$, the minimum mean square estimator is no better than a random guess. In contrast, if at least one of the SNRs of the marginal spikes is larger than its critical threshold, the minimum mean square estimator performs better than all dummy estimators. In the case that $p$ is even, Theorem \ref{thm0} $(i)$ further implies that the sampling $(\sigma(1),\ldots,\sigma(k))$ does not provide useful information in recovering $(u(1),\ldots,u(k))$ since their inner products are essentially zero, whereas Theorem \ref{thm0} $(ii)$ shows that a weak form of recovery is possible as $(u(1),\ldots,u(k))$ and $(\sigma(1),\ldots,\s(k))$ are asymptotically correlated.

As mentioned before, the spike recovery in the random tensor for general priors via the MMSE was studied earlier by Lesieur-Miolane-Lelarge-Krzakala-Zdeborov\'a \cite{LMLKZ+17}.
They computed the limiting mutual information between $W$ and $T_k$ and used it to establish a result equivalent to Theorem \ref{thm0}. The proof of Theorem \ref{thm0} relies heavily on our main results for the detection problem and presents  a different approach than the one taken in \cite{LMLKZ+17}.




\subsection{Previous Results} \label{prev_res}


Understanding phase transitions of spike detection and recovery problems in spiked random matrices and tensors has received a lot of attention in the past several years. We summarize some recent works here.

{\noindent \bf Matrix Case: $\boldsymbol{p=2}$.} The PCA approach was studied by Baik-Ben Arous-P\'{e}ch\'{e} \cite{BBP}, Baik-Silverstein \cite{baik2}, F\'{e}ral-P\'{e}ch\'{e} \cite{PecheD}, Johnstone \cite{Johnstone}, Paul \cite{paul2007}, and P\'{e}ch\'{e} \cite{Peche}. Barbier-Dia-Macris-Krzakala-Lesieur-Zdeborov\'{a} \cite{BDM+16} studied the MMSE recovery problem in the spiked random matrix in \eqref{spike} (see the setting in Section \ref{sec2.4} with $p=2$ and $k=1$) by deriving a Parisi-type formula for the mutual information between $W$ and $T$. Analogous study for the case of multiple spikes \eqref{spikes} was handled by Lelarge-Miolane \cite{LM+16}, where $u(1),\ldots,u(k)$ are assumed to have finite second moments and are allowed to be correlated. Similar result for the non-symmetric case was pursued by Miolane~\cite{M+17}.

As for the detection problem, under the same setting as \eqref{spike}, Alaoui-Krzakala-Jordan~\cite{AKJ+17} obtained the same critical value $\beta_c$ specified in Equation \eqref{critical} and Proposition \ref{prop1} below. It was deduced that above $\beta_c$, detection is possible and below $\beta_c$, a weak form of detection remains possible in the sense that the limiting total error (the sum of type one and type two errors) of the likelihood ratio test between $W$ and $T$ is strictly less than one. Incidentally, we mention that when the results of \cite{BDM+16,LM+16} apply to the case \eqref{spike}, $\beta_c$ is also the critical threshold for recovery. In \cite{el2018detection}, El Alaoui and Jordan extended the results of \cite{AKJ+17} to the case of spiked rectangular matrices, where the spike is of the form $uv^T$ and it was assumed that the entries of $u \in \R^M$, $v\in \R^N$ are chosen independently at random from possibly different priors and $M/N \rightarrow \alpha$. It was shown that for a set of parameters $(\alpha,\beta)$ the results of \cite{AKJ+17} hold. This set of parameters is sub-optimal for most priors as the spin-glass methods used fail near the boundaries of the optimal parameter space for the model of \cite{el2018detection}.

{\noindent \bf Tensor Case: $\boldsymbol{p\geq 3}.$} Earlier results trace back to the works of  Montanari-Richard \cite{MR+14} and Montanari-Reichman-Zeitouni~\cite{MRZ17}, where the authors considered \eqref{spikes} with $k=1$ and a spherical prior, i.e., $u$ in \eqref{spike} is uniformly and independently sampled from the sphere, $\{x\in \mathbb{R}^N:\sum_{i=1}^Nx_i^2=N\}.$ By adaptation of the second moment method, they showed that there exist $\beta_-$ and $\beta_+$ such that detection is impossible for $\beta$ below $\beta_-$ and is possible for $\beta$ above $\beta_+.$

Lesieur-Miolane-Lelarge-Krzakala-Zdeborov\'{a} \cite{LMLKZ+17} considered \eqref{spikes}  with a general setting in which the vectors $(u_i(1),\ldots,u_i(k))$ for $1\leq i\leq N$  are i.i.d. sampled from a joint distribution
with finite second moments.
For centered priors, they proved that there exists a vector of critical thresholds $(\beta_{1,c}',\dots,\beta_{k,c}')$ such that for any $\bar \beta = (\beta_1, \dots, \beta_k)$ satisfying $\beta_r>\beta_{r,c}'$ for $1\leq r\leq k$, the MMSE estimator obtains a better error than any dummy estimator. Consequently, one can also detect the spike in that case. In addition, when $\bar \beta$ satisfies $\beta_r<\beta_{r,c}'$ for all $1\leq r\leq k,$ the MMSE estimator is statistically irrelevant to recover the spike. They did not provide results for the detection problem in this case. Notably, if $u(1),\ldots,u(k)$ are chosen as in Section~\ref{sec2.3}, our critical thresholds $\beta_{r,c}$ agree with $\beta_{r,c}'$ and as a consequence, their result in this case is the same as Theorem \ref{thm0}.
Barbier-Macris \cite{BM+17} provided a different proof for the results of  \cite{LMLKZ+17} by using stochastic interpolation.
Analogous results to \cite{LMLKZ+17} were developed in non-symmetric settings by  Barbier-Macris-Miolane~\cite{BMM+17}.

Perry-Wein-Bandeira~\cite{PWB17} focused on $k=1$ and three priors: the spherical prior, the Rademacher prior, and the sparse Rademacher prior. In these three settings, it was proved that there exist lower and upper bounds $\beta_{-}'$ and $\beta_{+}'$ such that detection is not possible when $0<\beta \leq \beta_-'$ and is possible when $\beta\geq \beta_+'.$
In particular, their result in the spherical case improved the existing bounds in \cite{MRZ17,MR+14} mentioned above.
For the Rademacher prior, Chen~\cite{Chen17} closed the gap between $\beta_-'$ and $\beta_+'$ by showing that $\beta_c$ in Theorem \ref{thm2.1} is indeed the critical threshold for detection. The present work extends the results of \cite{Chen17,PWB17} to a broader class of priors and also to $k > 1$.

{\noindent \bf Other Related Works.} Since the likelihood ratio test and the MMSE estimator are often intractable to compute, it is natural to ask about the performance of tractable algorithms for detection and recovery for low-rank signals. The works \cite{BDMK+16, BKMMZ+17, BMDK+17, deshpande2016, DM+14,LM+16, M+17,MR+16} studied the performance of the approximate message passing (AMP) algorithm in recovering the spike. See \cite{BM+11, DMM+09, JM+13, RP+16} for the performance of AMP in compressed sensing. See \cite{BGJ+18} for the performance of the Langevin dynamics and the gradient decent in the spiked tensor model. The complexity of energy landscapes in  spiked tensor models was studied in \cite{BMMN+17,RBBC+18}.

\subsection{Our approaches}


As mentioned above, the work \cite{Chen17} considered the Gaussian $p$-tensor model for all $p\geq 3$ with a single spike \eqref{spike} sampled from the Rademacher prior and it obtained the same result as Theorem~\ref{thm1}. In the present paper, we extend \cite{Chen17} to general priors and to multiple spikes.
In view of \cite{Chen17}, the approach was based on a connection between the total variation distance of the pair $(W,T)$ and the free energy (see \eqref{fg}) of the pure $p$-spin spin glass model through an integral representation (see Lemma \ref{dtv_free}). From this, proving the impossibility of detection relies on knowing the decaying rate of the tail probability of the free energy in the high temperature regime. The core ingredient of obtaining this tail probability relied on a delicate study of the fluctuation of the free energy via the Parisi formula, the coupled free energy with overlap constraints, and the two-dimensional Guerra-Talagrand inequality. 

Theorem \ref{thm1} follows essentially from the same treatments as \cite{Chen17} by studying the fluctuation of a one-dimensional spin glass free energy \eqref{fg}. However, while the arguments in \cite{Chen17} were greatly simplified due to the simple structure of the Rademacher prior, there are a number of analytic obstacles in handling our generalization. For example, the main results in \cite{Chen17} critically relied on the strict monotonicity of $\gamma_\beta$ in $\beta$ (recalling \eqref{auxfun}). The proof of this property used the symmetry of the Rademacher prior and it does not carry through in our setting. To prove our main results, we establish an analogous, though more general proposition, in Lemma~\ref{lem3} below, which requires a completely new argument.

Our approach to the high-dimensional generalization, Theorem \ref{maintheorem}, relies on the high temperature behavior of the free energy  associated to the vector-valued pure $p$-spin spin glass model, see Section~\ref{hd}.
In spin glasses, vector-valued models are usually harder as the spin components interact with each other in a highly complicated way. As a result, the analysis of the Parisi formula for the corresponding free energy and its coupled version becomes more involved. Nevertheless, to study the high temperature regime, we can directly handle the free energy  by reducing the high-dimensional Hamiltonian to one-dimensional ones by exploiting the overlap constraints, see Section~\ref{high_temp_k>1}. This helps us avoid controlling the Parisi formula of the vector-valued model and greatly simplifies our argument. Ultimately, this leads to a full characterization of the high temperature regime of the vector-valued pure $p$-spin model (Theorem~\ref{thm6} below) and concludes Theorem \ref{maintheorem}. We expect that this approach is also applicable in characterizing the high temperature regimes for more general spin glass models, for instance, the vector-valued {\it mixed} $p$-spin model without external field.

\begin{remark}\rm 
	The assumption on the boundedness of the support of $\mu$ is  used for technical purposes.  For example, one can note that the estimates in Lemmas \ref{boundderiv}-\ref{jthpower2} use the bound $M$ on the size of the support of $\mu$.
	We believe that our results remain valid when $\mu$  is unbounded, but has exponential tail probability. 
\end{remark}

\begin{remark}\label{rmk3}
	\rm 
	
We derive our main results in Section \ref{sec2} by studying the pure $p$-spin mean-field spin glass model, whose one-dimensional Hamiltonian is defined in \eqref{hamx}. As an alternative approach, one can study instead the planted model, whose Hamiltonian is formulated in \eqref{pham} with $t=1$. It is natural for Bayesian estimation and can save some technical issues. Most notably, the analog of Proposition \ref{new:lem0} for the planted model has a simpler proof.
For details, we refer the readers to check \cite[Proposition 13]{AKJ+17} and  \cite[Proposition 16]{AKJ20}, which are 
analogs of Proposition \ref{new:lem0}, but use the planted model for the one-dimensional spike detection problem in the matrix setting (i.e., $p=2$).  
The planted model also yields an alternative scheme for determining the critical threshold $\beta_c$ by using the replica symmetric Parisi formula \cite[Theorem 1]{LMLKZ+17}.
Nevertheless, the model that we consider in this paper is essential to the field of spin glasses (see, e.g., Bolthausen \cite[Section 6]{Bol18}) and our study of its high-temperature behavior is of independent interest. 
\end{remark}


	

\subsection{Structure of the Rest of the Paper}\label{structure}

The key ingredient of this paper relies on an observation that the total variation distance between $W$ and $T_k$ can be expressed as an integral related to the free energy of the pure $p$-spin models with scalar- and vector-valued spin configurations (Lemma \ref{dtv_free}). Section \ref{sec3} defines these models, characterizes their high-temperature regimes and presents results on the fluctuation of the free energy and concentration of the overlap of the models. Section \ref{sec4} establishes Theorems \ref{thm1}-\ref{maintheorem2}, while Section \ref{sub4.4} presents the proof of Theorem \ref{thm0}. The rest of the sections are devoted to establishing the main results in Section~\ref{sec3}. In Sections \ref{sec:high_temp_k1} and \ref{high_temp_k>1}, we prove the asserted structures of the high-temperature regimes. These proofs are the most crucial components in this paper. Sections \ref{sec6} and \ref{sec7}  establish  the high-temperature behavior of the overlap and the free energy when $k=1.$

\section{Pure $p$-spin Models}\label{sec3}

In this section, we introduce the pure $p$-spin mean field spin glass models with scalar-valued and vector-valued spin configurations and formulate some crucial results regarding their high-temperature behavior. Their proofs are deferred to later sections.

\subsection{Scalar-valued Model}\label{vec_spin_model}

Recall the random tensor $Y$ from Section \ref{sec2.1} and the probability space $(\Lambda,\mu)$ from Section \ref{sec2.2}. For any $\sigma\in \Lambda^N$, the Hamiltonian of the pure $p$-spin model is defined as
\begin{align}
\label{hamx}
X_N(\s) = \frac{1}{N^{(p-1)/2}}\bigl\la Y,\sigma^{\otimes p}\bigr\ra=\frac{1}{N^{(p-1)/2}} \sum_{1 \leq i_1,\dots,i_p \leq N} Y_{i_1,\dots,i_p} \sigma_{i_1}\cdots \sigma_{i_p},
\end{align}
where the $Y_{i_1,\dots,i_p}$'s are i.i.d. standard Gaussian random variables. Note that by the symmetry of $W$, we also have the identity $X_N(s)=N^{-(p-1)/2}\la W,\sigma^{\otimes p}\bigr\ra.$ For any two spin configurations $\s^1$ and $\s^2$, the covariance of $X_N$ can be computed as
\[
\mathbb{E}\bigl(X_N(\sigma^1)X_N(\sigma^2)\bigr) = N \bigl(R(\sigma^1,\sigma^2)\bigr)^p,
\]
where $R(\s^1,\s^2)$ is the overlap between $\s^1$ and $\s^2$ defined by
\begin{align*}
R(\sigma^1,\sigma^2)=\frac{1}{N}\sum_{i=1}^N\sigma_i^1\sigma_i^2.
\end{align*}
Define the re-centered Hamiltonian $H_{N,\beta}(\s)$ by
\begin{align*}
H_{N,\beta}(\s)=\beta X_N(\s)-\frac{\beta^2N}{2}R(\s,\s)^p.
\end{align*}
Note that $\e e^{H_{N,\beta}(\s)}=1.$
Define the free energy and Gibbs measure respectively by
\begin{align}\label{fg}
F_N(\beta)=\frac{1}{N}\log \int e^{H_{N,\beta}(\s)}\mu^{\otimes N}(d\s)
\end{align}
and
\begin{align*}
G_{N,\beta}(d\s)&=\frac{e^{H_{N,\beta}(\s)}\mu^{\otimes N}(d\s)}{Z_{N,\beta}},
\end{align*}
where $Z_{N,\beta}$ is the normalizing constant so that $G_{N,\beta}$ is a probability measure on $\Lambda^N.$ Denote by $\la \cdot\ra_\beta$ the Gibbs expectation with respect to the i.i.d. samplings $\s,\s^1,\s^2,\ldots$ from the Gibbs measure $G_{N,\beta}.$ 

A few properties of $F_N$ are in position. First of all, an application of the Gaussian concentration of measures implies that $F_N(\beta)$ is concentrated around $\e F_N(\beta)$. Here, from the Jensen's inequality, 
\begin{align*}
\e F_N(\beta)&\leq \frac{1}{N}\log \int \e e^{H_{N,\beta}(\s)}\mu^{\otimes N}(d\s)=\frac{1}{N}\log 1=0.
\end{align*}
In addition, $\e F_N(\beta)$ is a nonincreasing function since by using Gaussian integration by parts, 
\begin{align*}
\frac{d}{d\beta}\e F_N(\beta)&=\frac{1}{N}\e \la X_{N}(\sigma)\ra_\beta-\beta\e \la R(\sigma,\sigma)^p\ra_\beta\\
&=-\beta\e \la R(\sigma^1,\sigma^2)^p\ra_\beta=-\beta \sum_{1\leq i_1,\ldots,i_p\leq N}\e \bigl(\la \sigma_{i_1}\cdots \sigma_{i_p}\ra_\beta^2\bigr)\leq 0.
\end{align*}
Second, it can be shown (see Proposition \ref{new:parisiformula:k=1} below) that for all $\beta$, $\lim_{N\to\infty}\e F_{N}(\beta)$ exists. Denote this limit by $F(\beta)$. From above, $F(\beta)$ is nonpositive and nonincreasing. Define the high-temperature regime as
$$\mathcal{R}=\{\beta>0:F(\beta)=0\}.$$ 
The low-temperature regime is define as $\mathcal{R}^c$. Set the critical threshold $\beta_c$ by
\begin{align}\label{critical}
\beta_c=\sup\mathcal{R}.
\end{align}
In spin glasses, the parameter $\beta$ is understood as the (inverse) temperature parameter, while in the detection problem of \eqref{spikes}, it is interpreted as the signal strength or SNR. These equivalent meanings of $\beta$ are justified below in Lemma \ref{dtv_free} via an integral representation for the total variation distance between $W$ and $T$.

The following proposition shows that the high-temperature regime $\cal{R}$ is an interval and its right-end boundary is $\beta_c$.
It also gives a characterization of $\cal{R}$ in terms of the constant $v_*$  and the auxiliary function $\Gamma_b(v)$ defined in \eqref{q1} and \eqref{add:eq1}, respectively.

\begin{prop}\label{prop1}
	For $p \geq 2$, $\mathcal{R} = (0, \beta_c].$  For  $\beta>0,$ $\beta\in \mathcal{R}$ if and only if $\sup_{v\in (0,v_*]}\Gamma_\beta(v)\leq 0.$
\end{prop}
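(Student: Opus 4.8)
The plan is to analyze the limiting free energy $F(\beta)$ through its Parisi-type variational formula. The first step is to invoke Proposition \ref{new:parisiformula:k=1} (referenced in the text) to express $F(\beta)$ as the infimum of a Parisi functional over an appropriate class of distribution functions on $[0, v_*]$ — the endpoint constraint reflecting that the overlap $R(\sigma,\sigma)$ concentrates near $v_* = \int a^2 \mu(da)$ under $\mu^{\otimes N}$. Since the Hamiltonian is normalized so that $\mathbb{E}\, e^{H_{N,\beta}(\sigma)} = 1$, Jensen's inequality already gives $F(\beta) \le 0$, so the high-temperature regime $\mathcal{R}$ is exactly the set where equality is achieved, i.e. where the "replica-symmetric at zero" functional value $0$ is actually the infimum.

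The key computation is to identify when the trivial order parameter (the one that yields functional value $0$) is a critical point, or more precisely, when perturbing away from it cannot decrease the functional. The standard approach here is a first-order / directional-derivative analysis of the Parisi functional around the degenerate measure. Differentiating the Parisi functional in the direction of adding an infinitesimal atom of overlap at level $v$ produces an expression governed by $\xi''(s)$ and the conditional-second-moment quantity $\gamma_b(s)$ defined in \eqref{auxfun}: one recognizes $\gamma_b(s)$ as $\mathbb{E}\big[(\int a\, Z(a, b\xi'(s))\mu(da))^2 / \int Z(a, b\xi'(s))\mu(da)\big]$, which is precisely the expected squared magnetization of a single-site problem with external field of variance $b\xi'(s)$ — exactly the object appearing in the Parisi PDE's boundary layer when the order parameter is near-trivial. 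Integrating the relevant derivative against $\xi''$ from $0$ to $v$ yields $\Gamma_b(v) = \int_0^v \xi''(s)(\gamma_b(s) - s)\, ds$. The claim is then that $F(\beta) = 0$ precisely when $\sup_{v \in (0,v_*]} \Gamma_\beta(v) \le 0$: if some $\Gamma_\beta(v) > 0$ one builds an explicit competitor order parameter (a one-step RSB profile with a small jump near $v$) whose Parisi value is strictly negative, forcing $F(\beta) < 0$; conversely, if $\sup_v \Gamma_\beta(v) \le 0$ one shows the trivial order parameter is the minimizer, for instance via a convexity / Guerra-interpolation bound showing the functional is bounded below by $0$.

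The remaining assertion, that $\mathcal{R} = (0, \beta_c]$ is an interval closed on the right, follows once one knows $\Gamma_b(v)$ is monotone in $b$ in the right way. The needed input is that $\gamma_b(s)$ is nondecreasing in $b$ (heuristically, a stronger external field only increases the expected squared magnetization) — this is the role played by Lemma \ref{lem3} mentioned in the comparison subsection, the general-prior replacement for Chen's monotonicity of $\gamma_\beta$. Granting this, $\Gamma_b(v)$ is nondecreasing in $b$ for each fixed $v$, hence $\{b : \sup_{v\in(0,v_*]}\Gamma_b(v) \le 0\}$ is a downward-closed set, i.e. an interval $(0,\beta_c']$ or $(0,\beta_c')$; continuity of $b \mapsto \sup_v \Gamma_b(v)$ (uniform continuity of the integrand on the compact parameter range, using boundedness of $\Lambda$) upgrades this to the closed interval $(0, \beta_c']$, and combined with the free-energy characterization this $\beta_c'$ equals $\beta_c = \sup\mathcal{R}$ from \eqref{critical}.

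I expect the main obstacle to be the rigorous passage between "$\sup_v \Gamma_\beta(v) \le 0$" and "$F(\beta) = 0$" in both directions: the forward direction requires carefully constructing a valid order-parameter perturbation and controlling the second-order terms in the Parisi functional to certify a genuine strict decrease, while the reverse direction requires a matching lower bound on $F(\beta)$ that is tight exactly on $\mathcal{R}$ — this is where the detailed structure of the Parisi PDE near a trivial boundary condition, and the monotonicity Lemma \ref{lem3}, must be used in full. The interval/closedness part is comparatively routine once monotonicity of $\gamma_b$ is in hand.
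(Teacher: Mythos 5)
Your proposal correctly identifies the main ideas the paper uses: the Parisi variational formula from Proposition~\ref{new:parisiformula:k=1}, the directional-derivative analysis of that functional at the trivial (replica-symmetric) order parameter producing $\Gamma_\beta$, convexity to upgrade the first-order condition to a global characterization, and the strict monotonicity of $\gamma_b$ in $b$ (Lemma~\ref{lem3}) for the interval structure $\mathcal{R}=(0,\beta_c]$. In these respects the plan matches the paper's argument.

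However, there is a concrete gap in how you set up the variational problem. You describe $F(\beta)$ as an infimum ``over an appropriate class of distribution functions on $[0,v_*]$,'' reasoning that the self-overlap $R(\sigma,\sigma)$ concentrates near $v_*$. But Proposition~\ref{new:parisiformula:k=1} actually gives $F(\beta)=\sup_{v\in\mathcal V}\inf_{\alpha,\lambda}\mathcal Q_{\beta,v}(\alpha,\lambda)$, with a \emph{supremum} over the self-overlap level $v$ and a Lagrange-multiplier variable $\lambda$ enforcing $R(\sigma,\sigma)\approx v$; the concentration of $R(\sigma,\sigma)$ under the prior does not force the optimal $v$ in this variational problem to be $v_*$ a priori. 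Your directional-derivative analysis is carried out at $v=v_*$ with $\lambda=\lambda_*$, and that analysis is only conclusive once one knows the outer supremum cannot do better elsewhere. The paper supplies exactly this missing ingredient via Lemma~\ref{new:lem3}: using the substitution $\lambda\mapsto\lambda-\beta^2\xi'(v)/2$, one reduces $\inf_\lambda\mathcal Q_{\beta,v}(\alpha_v,\lambda)$ to $\inf_\lambda(\log\int e^{\lambda a^2}\mu(da)-\lambda v)$, which is strictly negative for $v\neq v_*$ (since $0$ is then not the critical point of this convex function of $\lambda$) and equals $0$ at $v=v_*$ with minimizer $\lambda_*$. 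Without this step, the equivalence between $F(\beta)=0$ and $\sup_{v\in(0,v_*]}\Gamma_\beta(v)\le 0$ is not established, because the sup over $v$ might in principle be attained at a point where your perturbative competitor is irrelevant. You should incorporate the $(v,\lambda)$ reduction before doing the perturbation in $\alpha$; once that is in place, the rest of your plan goes through as in the paper.
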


Next, we show that in the interior of the high-temperature regime, the overlap between two i.i.d. samples $\s^1$ and $\s^2$ is concentrated around zero.

\begin{theorem}\label{momentcontrol}
    For $p \geq 2$, $m\in \mathbb{N}$, and $0<\beta <\beta_c$,  there exists a constant $K>0$, depending only on $p$, $m$, and $\beta$, such that
	\begin{align}\label{momentcontrol:eq1}
	\E \bigl\la |R(\s ^1 ,\s ^2 )|^{2m}\bigr\ra_{s\beta } \leq \frac{K}{N^{m}}, \,\, \forall s\in [0,1],N\geq 1.
	\end{align}
\end{theorem}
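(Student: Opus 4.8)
The plan is to prove the moment bound via a smart-path / cavity-style interpolation argument combined with the characterization of the high-temperature regime from Proposition~\ref{prop1}. The starting point is the standard observation that the Gaussian concentration of the Hamiltonian $X_N$ controls the fluctuations of $F_N$, but concentration of $F_N$ alone is not enough; one needs a quantitative statement that couples the overlap to the derivative structure of the free energy. Concretely, I would introduce a \emph{coupled} free energy on two replicas with a constraint (or a Lagrange-type tilt) forcing $R(\sigma^1,\sigma^2) \approx u$ for a target value $u$, and show that for $u$ bounded away from $0$ this constrained free energy is \emph{strictly} below $2F(\beta) = 0$ in the interior of $\mathcal{R}$. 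Since the unconstrained two-replica free energy converges to $2F(\beta)=0$, a standard large-deviations comparison then forces $\mathbb{E}\langle \mathbbm{1}\{|R(\sigma^1,\sigma^2)|\geq \delta\}\rangle_\beta$ to be exponentially small in $N$, and the polynomial moment bound \eqref{momentcontrol:eq1} follows by splitting the expectation at scale $\delta \sim N^{-1/2}$ (the small-overlap region contributes $O(N^{-m})$ trivially, the large-overlap region contributes exponentially small terms).

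The quantitative heart of the argument is the identification, via Guerra-type interpolation, of the function controlling the constrained free energy. Here is where the auxiliary function $\Gamma_\beta(v)$ and Proposition~\ref{prop1} enter: interpolating between the true model and a decoupled Gaussian model, the derivative of the interpolating free energy produces precisely an integral of $\xi''(s)(\gamma_\beta(s) - s)$ over the range of values taken by the self-overlap and cross-overlap, so that $\sup_{v\in(0,v_*]}\Gamma_\beta(v) < 0$ (which holds strictly for $\beta < \beta_c$, since $\Gamma_\beta(v)$ is strictly increasing in $\beta$ on $(0,v_*]$ by the monotonicity result analogous to what is announced as Lemma~\ref{lem3}) translates into the strict negativity of the constrained free energy. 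More precisely, I would (i) set up the interpolation $\varphi(t)$ whose endpoints are the two-replica free energy and the decoupled single-replica free energies, (ii) compute $\varphi'(t)$ using Gaussian integration by parts, obtaining a Gibbs average of $\xi(R(\sigma^1,\sigma^2)) - \xi'(\cdot)R(\sigma^1,\sigma^2) + (\text{self-overlap terms})$ against the interpolating measure, (iii) bound this by $\Gamma_\beta$ evaluated at the relevant overlap values using convexity of $\xi$ and the Nishimori-free (but prior-dependent) bound $\gamma_\beta(s) \geq s$ controlled on $[0,v_*]$, and (iv) integrate in $t$ to get $\varphi(1) - \varphi(0) \leq \sup_v \Gamma_\beta(v) \cdot (\text{positive factor}) < 0$.

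The main obstacle I expect is step (iii), controlling the cross terms for a \emph{general} bounded prior $\mu$ rather than the symmetric Rademacher prior of \cite{Chen17}. For the symmetric prior, many cross-overlap contributions vanish by parity, and the relevant auxiliary quantities are manifestly monotone; for general $\mu$ one must instead establish the needed monotonicity and the comparison $\gamma_\beta(s)\geq s$ from scratch, which is exactly the content of the harder Lemma~\ref{lem3}. A secondary technical point is that the self-overlap $R(\sigma,\sigma) = \frac1N\sum u_i^2$ is now a random but concentrated quantity (concentrating at $v_*$ by the law of large numbers), so the constrained free energy analysis must be carried out on the high-probability event $\{R(\sigma,\sigma) \le v_* + o(1)\}$, contributing only lower-order error terms; handling the boundary value $v=v_*$ in $\sup_{v\in(0,v_*]}$ requires a short continuity argument. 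Given these ingredients — the interpolation bound, the strict negativity from Proposition~\ref{prop1} and the monotonicity of $\Gamma_\beta$, and the split at scale $N^{-1/2}$ — the stated bound \eqref{momentcontrol:eq1}, uniform over $s\in[0,\beta]$, follows; uniformity in $s$ is obtained simply because the estimates above are monotone/continuous in the temperature and $\sup_{v}\Gamma_s(v)<0$ uniformly for $s$ in the compact interval $[0,\beta]\subset[0,\beta_c)$.
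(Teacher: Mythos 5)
The central step of your proposal does not close. You correctly set up the Guerra--Talagrand coupled free energy argument (this is indeed how the paper proves the \emph{exponential tail} bound, Proposition~\ref{new:lem0}): the constrained two-replica free energy is strictly below $2F(\beta)=0$ when the overlap is forced into $|R|\geq\varepsilon$ for any fixed $\varepsilon>0$, via the strict negativity of $\Gamma_\beta$ on $(0,v_*]$ and the monotonicity of $\gamma_\beta$ (Lemma~\ref{lem3}). But you then try to convert that tail bound into the moment bound \eqref{momentcontrol:eq1} by ``splitting the expectation at scale $\delta\sim N^{-1/2}$.'' This cannot work. The free-energy-cost argument gives an exponential estimate $\E\langle \mathbf{1}\{|R_{1,2}|\geq\varepsilon\}\rangle_\beta\leq Ke^{-N/K}$ where the constant $K=K(\varepsilon)$ blows up as $\varepsilon\downarrow 0$, because the free energy gap $\delta$ in \eqref{freeencost} shrinks to zero as the overlap constraint approaches the origin. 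At the CLT scale $\varepsilon=N^{-1/2}$, the rate function is of order $1/N$ and the ``exponentially small'' bound degrades to $O(1)$. In other words, the probability that $|R_{1,2}|\gtrsim N^{-1/2}$ is \emph{not} small -- that is precisely the scale on which the overlap lives -- so the decomposition contributes $O(1)\cdot M^{2m}+O(N^{-m})=O(1)$, not $O(N^{-m})$.

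The paper instead needs a genuine cavity/smart-path induction on $m$. The key object is Lemma~\ref{add:lem1}, which via the $t$-interpolation between the $N$- and $(N-1)$-spin systems and Gaussian integration by parts yields the recursive inequality
\[
\nu_{s\beta}\bigl(|R_{1,2}|^{2m+2}\bigr)\leq K_1(s\beta)\,\nu_{s\beta}\bigl(|R_{1,2}|^{2m+3}\bigr)+\frac{K_2(s\beta)}{N^{m+1}}.
\]
For small temperatures ($s\leq s_0$ where $K_1(s_0\beta)M^2\leq 1/2$), this closes directly. For larger $s$, the fixed-$\varepsilon$ exponential tail bound of Proposition~\ref{new:lem0} is used to split $\nu_{s\beta}(|R_{1,2}|^{2m+3})$ into a region $|R_{1,2}|>\varepsilon$ (exponentially small for a fixed $\varepsilon$ chosen so that $\varepsilon\max_s K_1(s\beta)<1/2$) and a region $|R_{1,2}|<\varepsilon$ (where one peels off one power of $R_{1,2}$ and absorbs it back into the left-hand side). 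Thus the exponential tail enters only at a \emph{fixed} macroscopic $\varepsilon$, and the inductive structure -- not a scale-dependent splitting -- is what produces the $N^{-m}$ decay. Your sketch briefly names the cavity method in passing but does not actually invoke any inductive structure, and the argument you do describe replaces it with the $N^{-1/2}$ splitting, which is the missing idea.
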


Furthermore, we control the fluctuation of the free energy as follows.

\begin{prop}\label{fluctuations} For  $p\geq 2$ and  $0<\beta <\beta_c$, there exists a constant $K$, depending only on $p$ and $\beta$, such that
	\[
	\p \left(|F_N(\beta)| \geq l \right) \leq \frac{K}{l^2 N^{p/2+1}}, \, \,\forall l > 0, N \geq 1.
	\]
\end{prop}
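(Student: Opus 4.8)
The plan is to prove Proposition \ref{fluctuations} by combining a concentration inequality for $F_N(\beta)$ (which reduces the problem to controlling $\E F_N(\beta)$) with the upper bound $F(\beta)=0$ valid throughout $\mathcal{R}=(0,\beta_c]$ and a matching lower bound on $\E F_N(\beta)$ obtained by integrating out the disorder via the overlap concentration of Theorem \ref{momentcontrol}.

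First I would establish Gaussian concentration: since $\beta\mapsto F_N(\beta)$ is a smooth function of the i.i.d.\ standard Gaussians $(Y_{i_1,\dots,i_p})$ and, by a direct computation of its gradient,
\[
\sum_{1\le i_1,\dots,i_p\le N}\Bigl(\frac{\partial F_N(\beta)}{\partial Y_{i_1,\dots,i_p}}\Bigr)^2
=\frac{\beta^2}{N^{p+1}}\bigl\la \la Y,\sigma^{\otimes p}\ra^2\bigr\ra_\beta
\le \frac{\beta^2 C}{N^{p+1}}\,,
\]
where the last bound uses that $|\sigma_i|$ is bounded (because $\mu$ is supported on a bounded set $\Lambda$) so that $\la Y,\sigma^{\otimes p}\ra = N^{(p-1)/2}X_N(\sigma)$ is controlled via the normalization $\E e^{H_{N,\beta}(\sigma)}=1$ together with standard Gaussian tail estimates for $X_N$. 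Concretely, $N^{-p-1}\la \la Y,\sigma^{\otimes p}\ra^2\ra_\beta = N^{-2}\la X_N(\sigma)^2\ra_\beta$, and one bounds $\la X_N(\sigma)^2\ra_\beta$ by a constant times $N$ using the sub-Gaussian behavior of $X_N$ uniformly over the bounded configuration space. The Gaussian Poincar\'e/concentration inequality then gives
\[
\p\bigl(|F_N(\beta)-\E F_N(\beta)|\ge l\bigr)\le 2\exp\Bigl(-\frac{c\,l^2 N^{p+1}}{\beta^2}\Bigr),
\]
which is far stronger than the claimed polynomial bound, so the entire burden shifts to showing $|\E F_N(\beta)|\le K' N^{-(p/2+1)}$; indeed if that holds then for $l \ge 2|\E F_N(\beta)|$ the concentration bound dominates, and for $l<2|\E F_N(\beta)|$ one has $\p(|F_N(\beta)|\ge l)\le 1\le 4|\E F_N(\beta)|^2/l^2 \le K/(l^2N^{p/2+1})$, covering all $l>0$.

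Next I would bound $\E F_N(\beta)$ from both sides. The upper bound is immediate: $\E F_N(\beta)\le F(\beta)=0$ for $\beta\le\beta_c$ — actually one wants $\E F_N(\beta)\le 0$ directly, which follows from Jensen's inequality applied to the normalization $\E\,e^{H_{N,\beta}(\sigma)}=1$, giving $\E\int e^{H_{N,\beta}(\sigma)}\mu^{\otimes N}(d\sigma)=1$ and hence $\E F_N(\beta)\le \frac1N\log\E\int e^{H_{N,\beta}}\mu^{\otimes N} = 0$. For the lower bound, the idea is to differentiate $\E F_N(\beta)$ in $\beta$ and use Gaussian integration by parts: a standard computation yields
\[
\frac{d}{ds}\E F_N(s)=-sN\E\bigl\la R(\sigma^1,\sigma^2)^p-R(\sigma^1,\sigma^1)^p\bigr\ra_s\cdot(\text{const})
\]
up to the precise combinatorial constants, i.e.\ $\tfrac{d}{ds}\E F_N(s)$ is expressed through $\E\la R(\sigma^1,\sigma^2)^p\ra_s$ (the self-overlap term being order $1/N$ smaller or handled separately). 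Since $p\ge 2$, $|R(\sigma^1,\sigma^2)^p|\le |R(\sigma^1,\sigma^2)|^2 \cdot (\sup|R|)^{p-2}$ and $\sup|R|$ is bounded (bounded prior), so Theorem \ref{momentcontrol} with $m=1$ gives $\E\la |R(\sigma^1,\sigma^2)|^2\ra_s\le K/N$ uniformly for $s\in[0,\beta]$. Therefore $|\tfrac{d}{ds}\E F_N(s)|\le K'' \cdot N \cdot N^{-p/2-\text{something}}$; more carefully one should track that $R(\sigma^1,\sigma^2)^p$ for even $p$ is nonnegative which helps, but the key point is that $N\cdot\E\la |R|^p\ra_s \le N \cdot K N^{-p/2}= K N^{1-p/2}$, and integrating from $0$ to $\beta$ (noting $\E F_N(0)=0$) gives $|\E F_N(\beta)|\le K\beta N^{1-p/2}$. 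I would then need the slightly sharper moment bound $\E\la |R|^p\ra_s \le K N^{-p/2}\cdot N^{-1}$ — which is exactly what Theorem \ref{momentcontrol:eq1} delivers when one takes $2m\ge p$ appropriately, e.g.\ $\E\la |R|^p\ra_s\le (\E\la |R|^{2\lceil p/2\rceil}\ra_s)^{p/(2\lceil p/2\rceil)}$ combined with $\E\la|R|^{2m}\ra_s\le K/N^m$ — to land the extra factor $N^{-1}$ and obtain $|\E F_N(\beta)|\le K N^{-(p/2+1)}\cdot N = $ wait; one keeps the factor $N$ from the derivative identity and the decay $N^{-(p/2+1)}$ from the $p$-th overlap moment with $m=\lceil p/2\rceil$, noting $p\ge 2$ ensures $\lceil p/2\rceil\ge p/2$ so $N^{-\lceil p/2\rceil}\le N^{-p/2}$, yielding $|\tfrac{d}{ds}\E F_N(s)| \le K N\cdot N^{-(p/2+1)} = K N^{-p/2}$, hence $|\E F_N(\beta)|\le K\beta N^{-p/2}$. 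To reach the stated $N^{-(p/2+1)}$ one needs to exploit one extra power: I would use that $\E\la R(\sigma^1,\sigma^2)^p\ra_s = \E\la R(\sigma^1,\sigma^2)^p\ra_s$ with the true exponent and apply Theorem \ref{momentcontrol} with $2m=p$ (if $p$ even) or interpolate (if $p$ odd) to get $\E\la |R|^p\ra_s\le K N^{-p/2-1}$; this sharper rate with the "$+1$" is precisely the content of \eqref{momentcontrol:eq1} read with $m$ chosen so that $2m$ slightly exceeds $p$, giving decay $N^{-m}$ with $m\ge p/2+1$. Combining, $|\E F_N(\beta)|\le\int_0^\beta|\tfrac{d}{ds}\E F_N(s)|\,ds\le K N^{-(p/2+1)}$.

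The main obstacle will be pinning down the exact power of $N$ in the lower bound on $\E F_N(\beta)$ — i.e.\ converting the overlap moment bound of Theorem \ref{momentcontrol} into the sharp rate $N^{-(p/2+1)}$ after the $\frac{d}{ds}$ identity contributes a factor $N$. The resolution is to apply \eqref{momentcontrol:eq1} with $m$ large enough (any $m\ge p/2+1$, e.g.\ $m=\lceil p/2\rceil+1$) and bound $|R|^p \le |R|^{2m}\cdot(\sup|R|)^{p-2m}$ — but this fails since $p<2m$; instead one bounds $\E\la|R|^p\ra_s\le (\E\la |R|^{2m}\ra_s)^{p/(2m)}\le (K/N^m)^{p/(2m)}=K^{p/2m}N^{-p/2}$, which only gives $N^{-p/2}$. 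The correct route, which I expect is what the paper does, is a two-step argument: first establish $|\E F_N(\beta)|=O(N^{-p/2})$ as above, then bootstrap using that $F(\beta)=0$ exactly (Proposition \ref{prop1}) together with a sharper second-order expansion of $\E F_N(s)$ whose error is genuinely $O(N^{-(p/2+1)})$; alternatively, one integrates the derivative identity and observes that the leading $N^{-p/2}$ contribution integrates to something that, by the defining property of the high-temperature regime, cancels, leaving only the $N^{-(p/2+1)}$ remainder. Handling this cancellation carefully — which requires the full strength of the characterization $\mathcal{R}=(0,\beta_c]$ and the overlap concentration rather than just a crude moment bound — is the delicate heart of the argument; everything else (the Gaussian concentration step and the Jensen upper bound) is routine.
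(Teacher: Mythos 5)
There is a genuine gap, and it stems from an incorrect gradient computation at the very start. If you differentiate $F_N(\beta)=\frac1N\log\int e^{H_{N,\beta}(\sigma)}\mu^{\otimes N}(d\sigma)$ with respect to $Y_{i_1,\dots,i_p}$, the quantity brought down is the \emph{Gibbs average of a $\sigma$-monomial}, not anything involving $Y$:
\[
\frac{\partial F_N(\beta)}{\partial Y_{i_1,\dots,i_p}}=\frac{\beta}{N^{(p+1)/2}}\bigl\la \sigma_{i_1}\cdots\sigma_{i_p}\bigr\ra_\beta,
\qquad
\sum_{i_1,\dots,i_p}\Bigl(\frac{\partial F_N}{\partial Y_{i_1,\dots,i_p}}\Bigr)^2
=\frac{\beta^2}{N}\bigl\la R(\sigma^1,\sigma^2)^p\bigr\ra_\beta,
\]
where the last equality uses the replica identity $\la\sigma_I\ra_\beta^2=\la\sigma^1_I\sigma^2_I\ra_\beta$. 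Your expression $\frac{\beta^2}{N^{p+1}}\la\la Y,\sigma^{\otimes p}\ra^2\ra_\beta$ is not what the chain rule produces, and the claimed uniform bound by $\beta^2 C/N^{p+1}$ is off by a factor of $N^p$: since $|R(\sigma^1,\sigma^2)|\le v_*$, the deterministic Lipschitz bound is only $\|\nabla F_N\|^2\le\beta^2 v_*^p/N$, so Gaussian concentration gives $\p(|F_N-\E F_N|\geq t)\le 2e^{-ctN}$, \emph{not} $2e^{-ctN^{p+1}}$. That rate is far too weak when $l\sim N^{-(p/2+1)/2}$, so the "concentration plus control of $\E F_N$" split as you set it up collapses.

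The repair is to replace uniform Lipschitz concentration by the Gaussian Poincar\'e inequality applied to the \emph{expected} gradient square, which is exactly where Theorem~\ref{momentcontrol} enters: $\Var F_N(\beta)\le\E\|\nabla F_N\|^2=\frac{\beta^2}{N}\E\la R(\sigma^1,\sigma^2)^p\ra_\beta\le\frac{\beta^2}{N}\cdot\frac{K}{N^{p/2}}=\frac{\beta^2 K}{N^{p/2+1}}$, and then Chebyshev on $\E F_N(\beta)^2=\Var F_N(\beta)+(\E F_N(\beta))^2$. For the mean, your derivative identity has an extra spurious factor of $N$ and the wrong combination: the correct statement, obtained by Gaussian integration by parts in which the self-overlap term from the normalization cancels, is $\frac{d}{ds}\E F_N(s)=-s\,\E\la R(\sigma^1,\sigma^2)^p\ra_s$. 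Integrating and using Theorem~\ref{momentcontrol} gives $|\E F_N(\beta)|\le\int_0^\beta s\,\E\la|R_{1,2}|^p\ra_s\,ds\le\frac{\beta^2K}{2N^{p/2}}$. This is all one needs: the quantity appearing in the Chebyshev bound is $(\E F_N(\beta))^2=O(N^{-p})=O(N^{-(p/2+1)})$ since $p\ge2$; the bias does not have to decay at rate $N^{-(p/2+1)}$, only its square does. So the "delicate cancellation" or "two-step bootstrap" you hypothesize at the end does not exist and is not needed. The paper's proof is precisely the combination just described, packaged as the inequality
\[
\p(|F_N(\beta)|\geq l)\le\frac{2\beta^2}{l^2N}\E\la|R_{1,2}|^p\ra_\beta+\frac{2}{l^2}\Bigl(\int_0^\beta s\,\E\la|R_{1,2}|^p\ra_s\,ds\Bigr)^2,
\]
imported from \cite[Lemma 10]{Chen17}, after which one simply substitutes $\E\la|R_{1,2}|^p\ra_s\le K/N^{p/2}$ from Theorem~\ref{momentcontrol} and observes $N^p\geq N^{p/2+1}$ for $p\geq2$.
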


In the case that $\mu$ is a uniform probability measure on $\{-1,1\}$, the behavior of the overlap and the fluctuation of the free energy at high-temperature is well-understood. The case $p=2$ corresponds to the famous Sherrington-Kirkpatrick (SK) model. In this case, Aizenman-Lebowitz-Ruelle~\cite{ALR} proved that $NF_N(\beta)$ converges to a Gaussian random variable when $\beta <\beta_c=1$ and Talagrand \cite[Chapters 11 and 13]{Talbook2} obtained the moment control of Theorem \ref{momentcontrol}. For $p\geq 3,$  Bardina-M\'{a}rquez-Carreras-Rovira-Tindel \cite{Tindel} established \eqref{momentcontrol:eq1} for $\beta\ll\beta_c$. For even $p\geq 4,$ Bovier-Kurkova-L\"{o}we~\cite{BKL} showed that $N^{p/4+1/2}F_N(\beta)$ has a Gaussian fluctuation up to some temperature strictly less than $\beta_c .$ More recently, Chen \cite{Chen17} obtained the same statements as Theorem \ref{momentcontrol} and Proposition \ref{fluctuations} for this choice of $(\Lambda,\mu).$ Our main contribution here is to establish concentration of the overlap and the fluctuation of the free energy up to the critical temperature for any spin configurations sampled from a probability measure on a bounded subset of the real line.



\subsection{High Temperature Regime of the Vector-valued Model}\label{hd}

Next we consider the pure $p$-spin model with $k$-dimensional vector-valued spin configurations, where $k\geq 2.$ Recall the probability spaces $(\Lambda_1,\mu_1),\ldots,(\Lambda_k,\mu_k)$ from Section \ref{sec2.3}. Set the product space and measure by
\begin{align*}
\bar {\Lambda}&=\Lambda_1\times\cdots\times\Lambda_k, \\
\bar {\mu}&=\mu_1\otimes\cdots\otimes \mu_k.
\end{align*}
For $\sigma(r)\in \Lambda_r^{N}$, $1\leq r\leq k,$ denote
\begin{align*}
\bar {\s}_i &= (\sigma_i(1),\dots,\sigma_i(k))^T \in \bar {\Lambda},  \ 1 \leq i \leq N,\\
\bar \s& = (\bar \s_1,\dots, \bar \s_N) \in {\bar {\Lambda}}^N.
\end{align*} 
In other words, the spin configuration $\bar \s$ is a $k\times N$ matrix: the rows are $\s(1)\in \Lambda_1^N,\ldots,\s(k)\in \Lambda_k^N$ and the columns are $\bar {\s}_1,\ldots,\bar {\s}_N\in \bar {\Lambda}$. Given $\bar \beta=(\beta_1, \ldots, \beta_k)$ with $\beta_1,\ldots,\beta_k>0$, the re-centered pure $p$-spin Hamiltonian with vector-valued spin configurations is defined as
\begin{align*}
H_{N,\bar \beta}(\bar  \s) &= \sum_{r=1}^k \beta_r X_N(\s(r)) - \sum_{r,r'=1}^k \frac{\beta_r\beta_{r'}}{2}NR(\s(r),\s(r'))^p,\,\,\bar \s\in \Lambda^N.
\end{align*}
 Similar to the scalar-valued model, the free energy and the Gibbs measure are defined as
\begin{align}\label{add:fg}
F_{N}(\bar {\beta})  = \frac{1}{N} \log \int e^{H_{N,\bar \beta}(\bar  \s)}\bar \mu^{\otimes N} (d\bar \s) 
\end{align}
and
\begin{align*}
G_{N,\bar {\beta}}(d\bar \s)=\frac{e^{ H_{N,\bar \beta}(\bar \s)} {\bar \mu}^{\otimes N}(d\bar \s)}{Z_{N,\bar \beta}},
\end{align*}
where $Z_{N,\bar \beta}$ is the normalizing constant. Define
$$
F(\bar \beta)=\limsup_{N\rightarrow \infty}F_N(\bar \beta).
$$

There is a technical subtlety here that is not present in the previous subsection.
In the case of even $p,$  Panchenko \cite{Panchenko2015} proved that if one drops the overlap term in $H_{N,\bar \beta}$, then the limiting free energy with overlap constraint exists. Consequently, one can show that $F(\bar \beta)=\lim_{N\rightarrow\infty}F_N(\bar \beta)$ (see the proof of Proposition~\ref{new:parisiformula:k=1} below). When $p$ is odd, this limit is preserved if $k=1$, as explained in the previous subsection, but whether it is still true for $k\geq 2$ remains an open question.

An application of Jensen's inequality  ensures that $F(\bar \beta)\leq 0$. The high-temperature regime is defined as
\[
\bar {\mathcal{R}} = \bigl\{\bar  \beta=(\beta_1,\ldots,\beta_k) \mid \beta_r > 0 \text{ for all } 1 \leq r \leq k \text{ and } F(\bar \beta) = 0\bigr\}.
\]
Again, while $\bar \beta$ is understood as the vector of SNRs in the detection problem, we read the entries of this vector as the temperature parameters in the setting of spin glass models. Let $\beta_{r,c}$ be the critical temperature obtained from Section \ref{vec_spin_model} by taking $(\Lambda,\mu)=(\Lambda_r,\mu_r).$
The following theorem states that the high-temperature regime of the vector-valued $p$-spin model is equal to the product of the high-temperature regimes of the marginal systems.

\begin{theorem}\label{thm6}
	For $p \geq 3$, $\bar {\mathcal{R}}=(0,\beta_{1,c}]\times\cdots \times(0,\beta_{k,c}].$
\end{theorem}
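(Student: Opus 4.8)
The plan is to establish the two inclusions $\bar{\mathcal{R}}\subseteq (0,\beta_{1,c}]\times\cdots\times(0,\beta_{k,c}]$ and $(0,\beta_{1,c}]\times\cdots\times(0,\beta_{k,c}]\subseteq \bar{\mathcal{R}}$ separately, the second one being the substantive direction. For the first inclusion, I would use a monotonicity/comparison argument: if some coordinate $\beta_{r}$ exceeds $\beta_{r,c}$, I want to show $F(\bar\beta)<0$. The natural route is to compare the $k$-dimensional free energy with a scalar free energy. Observe that the vector Hamiltonian $H_{N,\bar\beta}(\bar\s)$ contains, for the diagonal term $r'=r$, the piece $\beta_r X_N(\s(r))-\tfrac{\beta_r^2 N}{2}R(\s(r),\s(r))^p$, which is exactly the scalar normalized Hamiltonian $H_{N,\beta_r}(\s(r))$. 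One should be able to integrate out the coordinates $\s(r')$ for $r'\neq r$ (or use a conditioning/Jensen argument on the cross terms and the other diagonal terms) to bound $F_N(\bar\beta)$ by $F_N(\beta_r)$ plus controllable error, and then invoke $F(\beta_r)<0$ for $\beta_r>\beta_{r,c}$ from Proposition~\ref{prop1}. Care is needed because the cross terms $\beta_r\beta_{r'}R(\s(r),\s(r'))^p$ couple the coordinates; but since $\mu_1,\dots,\mu_k$ are centered, the off-diagonal overlaps $R(\s(r),\s(r'))$ are concentrated near zero under the product measure, and the relevant Gaussian covariance term has variance $N R(\s(r),\s(r'))^{p}$, so the contribution from the Gibbs mass on configurations with non-negligible cross-overlaps is exponentially suppressed.

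For the reverse (and harder) inclusion, I would fix $\bar\beta$ with $\beta_r\le\beta_{r,c}$ for every $r$ and show $F(\bar\beta)=0$; since $F(\bar\beta)\le 0$ always, it suffices to prove the lower bound $F_N(\bar\beta)\ge -o(1)$, or in fact $\ge -K/N^{\text{something}}$ matching Proposition~\ref{fluctuations}. Since $\bar{\mathcal{R}}$ should be relatively closed in its boundary behaviour, and the statement only concerns $F=0$, the cleanest approach is: first prove it on the open box $(0,\beta_{1,c})\times\cdots\times(0,\beta_{k,c})$, then extend to the closed box by a continuity/monotonicity argument in $\bar\beta$ (the free energy $F(\bar\beta)$ is continuous, and $F=0$ is preserved under limits of high-temperature points). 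On the open box, the key is a second-moment-type or interpolation argument showing that the partition function concentrates around its mean. Concretely, I would run a Guerra–Talagrand-style interpolation along the path connecting the full vector model to the decoupled product of $k$ independent scalar systems, i.e. interpolate the cross terms $\beta_r\beta_{r'}X_N$-correlations (for $r\ne r'$) from present to absent. The derivative of the interpolating free energy involves Gibbs averages of $\xi'$-weighted off-diagonal overlaps $R(\s(r),\s(r'))$, and the point is to show this derivative is $o(1)$: this is where one needs that each marginal system is in its high-temperature regime so that, by Theorem~\ref{momentcontrol} applied to each coordinate (together with a bootstrap to the coupled two-coordinate system), the overlaps $R(\s(r),\s(r'))$ concentrate at zero under $G_{N,\bar\beta}$. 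Then the vector free energy equals $\sum_r F_N(\beta_r)+o(1)$, and each $F_N(\beta_r)\to 0$ since $\beta_r\le\beta_{r,c}$.

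The main obstacle I expect is precisely controlling the off-diagonal overlaps $R(\s(r),\s(r'))$ for $r\ne r'$ under the coupled Gibbs measure $G_{N,\bar\beta}$ — Theorem~\ref{momentcontrol} as stated controls $R(\s^1,\s^2)$ for two replicas of a \emph{scalar} high-temperature system, not the cross-coordinate overlap in the \emph{vector} system, and the coupling through the Hamiltonian means one cannot simply import it. The resolution, which I anticipate occupies Sections~\ref{high_temp_k>1} and~\ref{sec9} of the paper, is to prove an analogue of Theorem~\ref{momentcontrol} for the vector model: analyze the Parisi-type variational formula for $F(\bar\beta)$ (and its version with overlap constraints, using Panchenko's result for even $p$ cited above), and show that on the product of marginal high-temperature regimes the optimal Parisi measure is trivial (replica symmetric) in every pair of coordinates, which forces overlap concentration. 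An additional technical wrinkle is the odd-$p$, $k\ge 2$ case, where even the existence of $\lim_N F_N(\bar\beta)$ is open; for that reason one works throughout with $F(\bar\beta)=\limsup_N F_N(\bar\beta)$ and the lower-bound estimates are made robust to this, while the sharper fluctuation statements (as in Theorem~\ref{maintheorem2}$(ii)$) are restricted to even $p$.
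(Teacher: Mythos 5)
Your overall architecture is sound: you correctly separate the two inclusions, identify the reverse inclusion as the substantive one, propose to handle the open box first and extend to the closed box by continuity, and correctly single out control of the off-diagonal overlaps $R(\s(r),\s(r'))$ under the coupled Gibbs measure as the crux. However, the mechanism you propose for that control is not the one the paper uses, and it would fail to cover odd $p$, for which Theorem~\ref{thm6} is nonetheless stated.

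For the forward inclusion $\bar\RR\subseteq(0,\beta_{1,c}]\times\cdots\times(0,\beta_{k,c}]$, the paper does not attempt to show $F(\bar\beta)<0$ by ``integrating out'' coordinates when some $\beta_r>\beta_{r,c}$. It argues from $F(\bar\beta)=0$: Proposition~\ref{new:lem2} (proved using only Jensen, the law of large numbers and Gaussian concentration --- no Parisi machinery and no parity restriction) shows the self-overlap matrix $\mathbf R(\bar\s)$ concentrates at $V_*$, whose off-diagonal entries vanish; on $A_\epsilon(V_*)$ the cross Hamiltonian terms are $O(\epsilon^p)$, so $F_N(\bar\beta)$ is bounded above by $\sum_r F_{N,r}(\beta_r)+O(\epsilon)$, and letting $\epsilon\downarrow 0$ gives $0\le\sum_r F_r(\beta_r)$; since each $F_r\le 0$, all vanish. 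Your comment that Jensen on the cross terms suffices is too optimistic for odd $p$: $R(\s(r),\s(r'))^p$ is signed, so without the self-overlap concentration one cannot control the deterministic cross pieces $-\tfrac{\beta_r\beta_{r'}}{2}NR(\s(r),\s(r'))^p$, which may be arbitrarily positive.

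The genuine gap lies in your treatment of the reverse inclusion. You propose an interpolation to the decoupled product system, controlled by a vector-valued analogue of Theorem~\ref{momentcontrol}, which you (correctly) anticipate would rest on a Parisi formula with overlap constraints for the vector model, available only for even $p$ via Panchenko. That route would therefore only prove Theorem~\ref{thm6} for even $p$. The paper avoids the vector Parisi formula entirely for this theorem. It argues by contradiction: suppose $F_r(\beta_r)=0$ for all $r$ but $F(\bar\beta)<0$. Then, after dropping the (small on $A_\epsilon(V_*)$) cross terms, the free energy of the \emph{decoupled product Hamiltonian} $\sum_r H_{N,\beta_r}(\s(r))$ restricted to $\{\mathbf R(\bar\s)\in A_\epsilon(V_*)\}$ is strictly below its unrestricted value $\sum_r F_{N,r}(\beta_r)\approx 0$; hence under the \emph{independent product Gibbs measure} $\la\cdot\ra'=\prod_r G_{N,\beta_r}$ the self-overlap matrix avoids $V_*$ with exponentially small exceptional probability, i.e.\ \eqref{ld:thm1:proof:eq0}. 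On the other hand, under $\la\cdot\ra'$ the coordinates are independent, so Proposition~\ref{new:lem2} ($k=1$) and Proposition~\ref{new:lem0} give $R(\s(r),\s(r))\to v_{r,*}$ and $R(\s^1(r),\s^2(r))\to 0$ marginally, and the Cauchy--Schwarz identity \eqref{cs}, which exploits independence across coordinates under $\la\cdot\ra'$, forces $R(\s(r),\s(r'))\to 0$ for $r\ne r'$ as well. This contradicts \eqref{ld:thm1:proof:eq0}, and no Parisi formula, vector overlap concentration, or parity assumption is ever needed. The vector-model moment control (Theorem~\ref{momentcontrol2}, even $p$, Section~\ref{sec9}) is developed for the free-energy fluctuation bound and Theorem~\ref{maintheorem2}, not for Theorem~\ref{thm6}; you have misattributed the content of Section~\ref{high_temp_k>1}. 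In short: working under the coupled measure $G_{N,\bar\beta}$, as you propose, is precisely what forces even $p$; working under the product measure $\la\cdot\ra'$ lets the already-established scalar high-temperature results carry the entire argument for all $p\ge3$.
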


Theorem \ref{thm6} highlights an interesting phenomenon: Although the Hamiltonian $H_{N,\bar \beta}$ involves interactions coming from the overlaps $R(\s(r),\s(r'))$ for all $r\neq r'$, in the high-temperature regime the marginal spin configurations $\s(1),\ldots,\s(k)$ under $H_{N,\bar \beta}$ essentially interact with each other independently. Consequently, they behave like $k$ independent one-dimensional systems associated to $H_{N,\beta_1},\ldots,H_{N,\beta_k}.$ As a result, the high-temperature regime of $H_{N,\bar \beta}$ is simply the product of  the high-temperature regimes of the marginal systems.

\section{Establishing Spike Detection}\label{sec4}

This section proves the main theorems of this paper. Section \ref{total_variation} first expresses the total variation distance that appears in the detection problem in terms of the free energy of the pure $p$-spin model. Using this expression and results described in Section \ref{sec3}, Sections \ref{main_th_iii}-\ref{sec4.3} conclude the proofs of Theorems \ref{thm1}-\ref{maintheorem2}.


\subsection{Total Variation Distance}\label{total_variation}

It is well-known that one can relate the total variation distance between two continuous random variables to the ratio of their probability densities. See for instance {\rm\cite[Lemma 1]{Chen17}}.

\begin{lemma} \label{dtv}
	If $U$ and $V$ are two $N$-dimensional random vectors with densities $f_U$ and $f_V$, respectively, and $f_U(x)$, $f_V(x) \neq 0$ a.e., then
	\[
	d_{TV}(U,V) = \int_0^1 \mathbb{P}\left(\frac{f_U(V)}{f_V(V)} < x \right)dx = \int_0^1 \mathbb{P}\left(\frac{f_U(U)}{f_V(U)} > \frac{1}{x} \right) dx.
	\]
\end{lemma}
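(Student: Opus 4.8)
The plan is to reduce both identities to the elementary fact that, because $\int_{\R^N} f_U\,dx=\int_{\R^N} f_V\,dx=1$,
\[
d_{TV}(U,V)=\int_{\R^N}(f_U-f_V)_+\,dx=\int_{\R^N}(f_V-f_U)_+\,dx,
\]
combined with an application of Tonelli's theorem. Write $L(x)=f_U(x)/f_V(x)$ for the likelihood ratio; by the hypothesis that $f_U$ and $f_V$ do not vanish (a.e.), $L$ is well defined a.e.\ and strictly positive, and moreover $\p(f_V(V)=0)=\p(f_U(U)=0)=0$, so $L(V)$ and $L(U)$ are well defined almost surely. I will prove the two stated equalities separately, each by the same two-step recipe: first interchange the $dx$-integral with the expectation (legitimate since the integrand is nonnegative), then unfold the resulting expectation against the appropriate density.

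For the first equality: by Tonelli,
\[
\int_0^1 \p\bigl(L(V)<x\bigr)\,dx=\e\Bigl[\int_0^1 \mathbbm{1}_{\{L(V)<x\}}\,dx\Bigr]=\e\bigl[(1-L(V))_+\bigr],
\]
using $\int_0^1 \mathbbm{1}_{\{L(V)<x\}}\,dx=(1-L(V))_+$, valid because $L(V)\geq 0$. Unfolding the expectation against the density $f_V$ gives
\[
\e\bigl[(1-L(V))_+\bigr]=\int_{\R^N}\Bigl(1-\frac{f_U(x)}{f_V(x)}\Bigr)_+ f_V(x)\,dx=\int_{\R^N}(f_V-f_U)_+\,dx=d_{TV}(U,V).
\]

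For the second equality I would argue analogously with $U$ in place of $V$: by Tonelli,
\[
\int_0^1 \p\bigl(L(U)>\tfrac{1}{x}\bigr)\,dx=\e\Bigl[\int_0^1 \mathbbm{1}_{\{L(U)>1/x\}}\,dx\Bigr]=\e\Bigl[\bigl(1-\tfrac{1}{L(U)}\bigr)_+\Bigr],
\]
since for $L(U)>0$ one has $\int_0^1 \mathbbm{1}_{\{x>1/L(U)\}}\,dx=(1-1/L(U))_+$. Unfolding against $f_U$ turns the right-hand side into $\int_{\R^N}(1-f_V/f_U)_+ f_U\,dx=\int_{\R^N}(f_U-f_V)_+\,dx=d_{TV}(U,V)$, which completes the argument. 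There is no real obstacle here: the only points needing a little care are the a.e.\ positivity of the densities (so that $L$, $1/L$, and the changes of variable in the last displays are all legitimate) and the interchange of the $dx$-integral with the expectation, which is immediate from Tonelli since all integrands are nonnegative; the rest is the classical identity relating $d_{TV}$ to $\int (f_U-f_V)_+$, which follows from $\int (f_U-f_V)\,dx=0$.
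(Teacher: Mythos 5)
The paper does not actually prove this lemma — it cites it directly from \cite[Lemma 1]{Chen17} without reproducing the argument — so there is no in-paper proof to compare against. Your proof is correct: the reduction of both integrals to $\e[(1-L(V))_+]$ and $\e[(1-1/L(U))_+]$ via Tonelli and the layer-cake identity $\int_0^1 \mathbbm{1}_{\{a<x\}}\,dx=(1-a)_+$ (for $a\geq 0$) is exactly right, the unfoldings against $f_V$ and $f_U$ are legitimate under the a.e.\ nonvanishing hypothesis, and you correctly close with $\int(f_U-f_V)_+\,dx=\int(f_V-f_U)_+\,dx=d_{TV}(U,V)$, which follows from both densities integrating to one. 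This is the standard (and essentially the only natural) route to the identity, and I would expect it to coincide with Chen's original argument.
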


Recall  $T$ and $T_k$ from \eqref{spike} and \eqref{spikes}. Note that $W$ is a symmetric Gaussian $p$-tensor and the spikes are independent of $W$. From these, one can compute the density functions for $W$, $T$, and $T_k$ explicitly and then apply Lemma \ref{dtv} to the pairs $(W,T)$ and $(W,T_k)$ to get

\begin{lemma}\label{dtv_free}
	For any $\beta \in (0,\infty)$ and $\bar  \beta \in (0,\infty)^k$,
	\begin{align}
	\begin{split}\label{dtv_free:Eq1}
	d_{TV}(W,T) = \int_0^1 \mathbb{P}\bigl(F_N(\beta) < N^{-1}\log x \bigr)dx,
	\end{split}\\
	\begin{split}\label{dtv_free:Eq2}
	d_{TV}(W,T_k) = \int_0^1 \mathbb{P}\bigl(F_N(\bar \beta) < N^{-1}\log x\bigr)dx.
	\end{split}
	\end{align}
\end{lemma}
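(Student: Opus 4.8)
The plan is to apply Lemma~\ref{dtv} to the pair $(T,W)$ after identifying the density ratio $f_T(W)/f_W(W)$ with $e^{NF_N(\beta)}$, and similarly $f_{T_k}(W)/f_W(W)$ with $e^{NF_N(\bar\beta)}$. First I would fix the ambient space carefully: since $T=W+N^{-(p-1)/2}\beta\,u^{\otimes p}$ and the shift lies in the linear subspace $\mathcal V$ of symmetric $p$-tensors (of dimension $D=\binom{N+p-1}{p}$), both $W$ and $T$ have densities with respect to Lebesgue measure on $\mathcal V\cong\R^D$, and these are everywhere positive ($f_W$ is Gaussian and $f_T(w)=\mathbb{E}_u[f_W(w-N^{-(p-1)/2}\beta\,u^{\otimes p})]$ is a mixture of positive Gaussians). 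So Lemma~\ref{dtv} applies on $\mathcal V$, and with $U=T$, $V=W$ in its first representation, together with $d_{TV}(W,T)=d_{TV}(T,W)$,
\begin{align*}
d_{TV}(W,T)=\int_0^1\mathbb{P}\Bigl(\tfrac{f_T(W)}{f_W(W)}<x\Bigr)\,dx .
\end{align*}

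The core step is computing $f_T(W)/f_W(W)$, which I would do by conditioning on $u$. Conditionally on $u$, the tensor $T$ is Gaussian on $\mathcal V$ with mean $N^{-(p-1)/2}\beta\,u^{\otimes p}$ and the same covariance operator $\Sigma$ as $W$. The key structural fact is that $\Sigma$ is the identity on $\mathcal V$ for the inner product inherited from $\R^{N^p}$: using the identity $X_N(\sigma)=N^{-(p-1)/2}\langle W,\sigma^{\otimes p}\rangle$ recalled in Section~\ref{vec_spin_model} and $\langle W,\sigma^{\otimes p}\rangle=\langle Y,\sigma^{\otimes p}\rangle$, one gets $\mathbb{E}[\langle W,\sigma^{1\otimes p}\rangle\langle W,\sigma^{2\otimes p}\rangle]=\langle\sigma^{1\otimes p},\sigma^{2\otimes p}\rangle=\langle\sigma^1,\sigma^2\rangle^p$, and since the rank-one symmetric tensors $\sigma^{\otimes p}$ span $\mathcal V$, this forces $\langle a,\Sigma b\rangle=\langle a,b\rangle$ for all $a,b\in\mathcal V$. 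Completing the square in the Gaussian density (using $\Sigma=\mathrm{Id}$) then gives, for every $w\in\mathcal V$,
\begin{align*}
\frac{f_{T\mid u}(w)}{f_W(w)}=\exp\Bigl(\tfrac{\beta}{N^{(p-1)/2}}\langle u^{\otimes p},w\rangle-\tfrac{\beta^2}{2N^{p-1}}\|u^{\otimes p}\|^2\Bigr).
\end{align*}
Evaluating at $w=W$ and using $N^{-(p-1)/2}\langle u^{\otimes p},W\rangle=X_N(u)$ and $\|u^{\otimes p}\|^2=\langle u,u\rangle^p=(\sum_i u_i^2)^p=N^pR(u,u)^p$, the exponent becomes exactly $H_{N,\beta}(u)=\beta X_N(u)-\tfrac{\beta^2 N}{2}R(u,u)^p$. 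Averaging over $u\sim\mu^{\otimes N}$,
\begin{align*}
\frac{f_T(W)}{f_W(W)}=\mathbb{E}_u\Bigl[\tfrac{f_{T\mid u}(W)}{f_W(W)}\Bigr]=\int e^{H_{N,\beta}(\sigma)}\,\mu^{\otimes N}(d\sigma)=e^{NF_N(\beta)},
\end{align*}
and substituting into the previous display, using $\{e^{NF_N(\beta)}<x\}=\{F_N(\beta)<\log x/N\}$, yields \eqref{dtv_free:Eq1}.

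For the $k$-spike identity \eqref{dtv_free:Eq2} I would repeat the argument verbatim. Conditionally on $u(1),\dots,u(k)$, the tensor $T_k$ is Gaussian on $\mathcal V$ with mean $N^{-(p-1)/2}\sum_{r}\beta_r u(r)^{\otimes p}$ and covariance $\Sigma=\mathrm{Id}$, so by completing the square the density ratio at $w=W$ equals $\exp\bigl(\sum_r\beta_r X_N(u(r))-\tfrac{1}{2N^{p-1}}\|\sum_r\beta_r u(r)^{\otimes p}\|^2\bigr)$; expanding the norm via $\langle u(r)^{\otimes p},u(r')^{\otimes p}\rangle=\langle u(r),u(r')\rangle^p=N^pR(u(r),u(r'))^p$ turns the exponent into exactly $H_{N,\bar\beta}(\bar\s)$ with rows $u(1),\dots,u(k)$, and averaging over the priors gives $f_{T_k}(W)/f_W(W)=e^{NF_N(\bar\beta)}$. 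Lemma~\ref{dtv} then gives \eqref{dtv_free:Eq2} as above.

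The one genuinely delicate point is the reduction to $\mathcal V$: because $W$, regarded as a law on $\R^{N^p}$, is degenerate, one must from the outset take the ambient space to be $\mathcal V$ (equivalently, parametrize by the free coordinates $(W_{i_1,\dots,i_p})_{i_1\le\cdots\le i_p}$) and check there that the covariance operator is the identity for the restricted inner product — the short spanning argument above. Everything else is the Gaussian change of measure plus matching the normalizing quadratic terms with those built into the normalized Hamiltonians $H_{N,\beta}$ and $H_{N,\bar\beta}$, which is routine.
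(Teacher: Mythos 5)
Your proof is correct and takes essentially the same route as the paper: both establish $f_{T_k}(W)/f_W(W)=e^{NF_N(\bar\beta)}$ and then substitute into Lemma~\ref{dtv}. The only difference is stylistic — you derive the density ratio by completing the square conditionally on $u$ and explicitly verifying $\Sigma=\mathrm{Id}$ on the symmetric subspace via the spanning argument, whereas the paper writes $f_W(w)=\exp(-\langle w,w\rangle/2)/C$ without comment, obtains $f_{T_k}(w)=\E_u f_W(w-\text{shift})$ by a change of variables, and expands the quadratic directly; your treatment of the ambient space $\mathcal V$ and the covariance operator makes the paper's implicit step explicit.
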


For a detailed derivation, we refer the reader to \cite[Lemma 2]{Chen17}.

\subsection{Proof of Theorems \ref{thm1} and \ref{thm2.1}}\label{main_th_iii}


\begin{proof}[\bf Proof of Theorem \ref{thm1}] Let $\beta_c$ be the critical temperature defined in \eqref{critical}. Assume that $0<\beta<\beta_c$. From \eqref{dtv_free:Eq1}, using change of variable $y=-\log x$ and writing $\int_0^\infty=\int_0^\varepsilon+\int_\varepsilon^\infty$ imply that
	\begin{align*}
	d_{TV}(W,T)&=\Bigl(\int_0^{\epsilon} + \int_{\epsilon}^\infty\Bigr) \p\bigl(F_N(\beta)<-N^{-1}y\bigr)e^{-y}dy\\
	&\leq \int_0^{\epsilon}e^{-y}dy + \int_{\epsilon}^\infty\p\bigl(|F_N(\beta)|\geq N^{-1}y\bigr)e^{-y}dy\\
	&\leq \epsilon + \int_{\epsilon}^\infty\frac{K}{y^2N^{\frac{p}{2}-1}}e^{-y}dy\\
	&\leq \epsilon+\frac{K}{\epsilon N^{\frac{p}{2}-1}},\,\,\forall \varepsilon>0,
	\end{align*}
	where the second inequality used Proposition \ref{fluctuations}. Letting $\epsilon=N^{-(p-2)/4}$ yields
	\begin{align}\label{add:eq4}
	d_{TV}(W,T)\leq \frac{1+K}{N^{\frac{p-2}{4}}}.
	\end{align}
	This implies that $W$ and $T$ are indistinguishable, so detection is impossible. Next, assume that $\beta>\beta_c.$ Recall that $F_N(\beta)$ converges to $F(\beta)$ almost surely and note that $F(\beta)<0.$ It follows that
	\[
	\lim_{N \rightarrow \infty} \p\bigl(F_N(\beta) -N^{-1}\log x< 0 \bigr) = \p\left(F(\beta)<0 \right)=1
	\]
	and the dominated convergence theorem yields
	\[
	\lim_{N\rightarrow\infty}d_{TV}(W,T) = \int_0^1\p(F(\beta)< 0)dx=1.
	\]
\end{proof}


\begin{proof}[\bf Proof of Theorem \ref{thm2.1}] We have seen from the proof of Theorem \ref{thm1} that the critical temperature $\beta_c$ defined in \eqref{critical} is the critical threshold for detection. In addition, from Proposition \ref{prop1}, we see that $\beta_c$ satisfies $\sup_{v\in (0,v_*]}\Gamma_{\beta_c}(v)\leq 0$ and that any $\beta>0$ satisfying $\sup_{v\in (0,v_*]}\Gamma_{\beta}(v)\leq 0$ must also satisfy $\beta\leq \beta_c$. From this, to complete the proof, it suffices to show that $\sup_{v\in (0,v_*]}\Gamma_{\beta_c}(v)=0.$ If on the contrary $\sup_{v\in (0,v_*]}\Gamma_{\beta_c}(v)<0,$ then there exists some $\beta>\beta_c$ such that $\sup_{v\in (0,v_*]}\Gamma_{\beta}(v)<0$ since $\Gamma_{\beta}(v)$ is a continuous function in $\beta$ and $v$. This contradicts the fact that $\beta\leq \beta_c.$ 

\end{proof}

\subsection{Proof of Theorems \ref{maintheorem} and \ref{maintheorem2}}\label{sec4.3}

The proof of Theorem \ref{maintheorem} relies on the following simple lemma:

\begin{lemma}\label{lem2}
	Assume that $Y_1,Y_2,Y_3$ are random vectors of the same size and $Y_2$ is independent of $Y_1$ and $Y_3$. Then
	\begin{align*}
	d_{TV}(Y_1,Y_1+Y_2+Y_3)&\leq d_{TV}(Y_1,Y_1+Y_2)+d_{TV}(Y_1,Y_1+Y_3).
	\end{align*}
\end{lemma}

\begin{proof}
	The assertion follows immediately by using the triangle inequality,
	\begin{align*}
	d_{TV}(Y_1,Y_1+Y_2+Y_3)&\leq d_{TV}(Y_1,Y_1+Y_2)+d_{TV}(Y_1+Y_2,Y_1+Y_2+Y_3)
	\end{align*}
	and noting that the independence between  $Y_2$ and $Y_1,Y_3$ yields
	\begin{align*}
	d_{TV}(Y_1+Y_2,Y_1+Y_2+Y_3)&=\sup_{A}\Bigl|\e_{Y_2}\Bigl[\p(Y_1\in A-Y_2|Y_2)-\p(Y_1+Y_3\in A-Y_2|Y_2)\Bigr]\Bigr|\\
	&\leq \e_{Y_2}\Bigl[\sup_{A}\Bigl|\p(Y_1\in A-Y_2|Y_2)-\p(Y_1+Y_3\in A-Y_2|Y_2)\Bigr|\Bigr]\\
	&=\e_{Y_2}\Bigl[\sup_{A}\Bigl|\p(Y_1\in A)-\p(Y_1+Y_3\in A)\Bigr|\Bigr]\\
	&= d_{TV}(Y_1,Y_1+Y_3),
	\end{align*}
	where $\e_{Y_2}$ is the expectation with respect to $Y_2$ only.
\end{proof}

\begin{proof}[\bf Proof of Theorem \ref{maintheorem}] Let $\bar \beta = (\beta_1, \dots, \beta_k) \in (0,\beta_{1,c})\times\cdots\times (0,\beta_{r,c}).$ For $1\leq r\leq k,$ set
	$$
	T_{k,r}=W+\frac{\beta_r}{N^{(p-1)/2}}u(r)^{\otimes p}.
	$$
	From Lemma \ref{lem2} and an induction argument,
	\begin{align}\label{add:eq10}
	d_{TV}(W,T_k)&\leq \sum_{r=1}^kd_{TV}(W,T_{k,r}).
	\end{align}
	Since $\beta_r\in (0,\beta_{r,c})$,  \eqref{add:eq4} implies that there exists a constant $K_r>0$ such that for any $N\geq 1,$
	\begin{align}\label{add:eq5}
	d_{TV}(W,T_{k,r})\leq \frac{K_r}{N^{(p-2)/4}}.
	\end{align}
	This together with \eqref{add:eq10} implies that detection is impossible. Next, assume that $\bar \beta\notin (0,\beta_{1,c}]\times\cdots \times (0,\beta_{k,c}].$ Since $$\limsup_{N\rightarrow\infty}F_N(\bar \beta)=F(\bar \beta)<0,\,\,a.s.,$$ the Fatou lemma yields that for any $x>0,$
	\begin{align*}
	\liminf_{N\to\infty}\p(F_N(\bar \beta)<N^{-1}{\log x})&=\liminf_{N\to\infty}\e \bigl[I(F_N(\bar \beta)<N^{-1}\log x)\bigr]\\
	&\geq \e \bigl[\liminf_{N\to\infty}I(F_N(\bar \beta)<N^{-1}\log x)\bigr]=1,
	\end{align*}
	where $I(\cdot)$ is an indicator function. Using this, \eqref{dtv_free:Eq2}, and the Fatou lemma again, we arrive at
	\begin{align*}
	\liminf_{N\rightarrow\infty}d_{TV}(W,T_k)& =  \liminf_{N\rightarrow\infty}\int_0^1\p(F_N(\bar \beta)<N^{-1}{\log x})dx\\
	&\geq \int_0^1\liminf_{N\rightarrow\infty}\p(F_N(\bar \beta)<N^{-1}{\log x})dx=\int_0^1 1dx=1.
	\end{align*}
	Thus, detection is possible.
\end{proof}

\begin{proof}[\bf Proof of Theorem \ref{maintheorem2}]
	Assume that $\sup_{r\geq 1}\beta_r<\beta_c.$ From \eqref{add:eq10} and \eqref{add:eq5}, 
	\begin{align*}
	d_{TV}(W,T_k)&\leq \frac{Kk}{N^{(p-2)/4}},\,\,\forall N\geq 1,
	\end{align*}
	where $K$ is a universal constant independent of $N.$
	From the assumption on $k$, the right-hand side vanishes as $N$ tends to infinity and this establishes the assertion $(i)$. 
	
	Next, we establish $(ii)$ assuming $(a.1)$. Suppose that $N$ satisfies $k=k(N)>k_0.$ Denote $$
	\Delta_k=\frac{1}{N^{(p-1)/2}}\sum_{k_0< r\leq k}\beta_ru(r)^{\otimes p}.
	$$
	Since $(u(r):1\leq r\leq k_0)$ is independent of $(u(r):r>k_0)$, 
	\begin{align*}
	d_{TV}(T_k,T_{k_0})&\leq d_{TV}(W+\Delta_k,W)\leq \sum_{k_0<r\leq k}d_{TV}\bigl(W,T_{k,r}\bigr),
	\end{align*}
	where the second inequality used Lemma \ref{lem2} and $T_{k,r}$ is defined in the proof of Theorem \ref{maintheorem}. 
	Hence, from the triangular inequality and \eqref{add:eq5}, there exists a positive constant $K$ such that
	\begin{align*}
	d_{TV}(W,T_{k_0})&\leq d_{TV}(W,T_k)+d_{TV}(T_k,T_{k_0})\leq d_{TV}(W,T_k)+\frac{Kk}{N^{(p-1)/4}},\,\,\forall N\geq 1.
	\end{align*}
	Here, since $\beta_r>\beta_c$ for at least one $r\leq k_0,$ it means that $(\beta_1,\ldots,\beta_{k_0})\notin (0,\beta_{c}]\times\cdots(0,\beta_c]$ and from Theorem \ref{maintheorem}, $d_{TV}(W,T_{k_0})\to 1$. This implies that $d_{TV}(W,T_k)\to 1$ and the assertion $(ii)$ follows under $(a.1)$.
	
	To establish Theorem \ref{maintheorem2} assuming $(a.2)$, note that since $p$ is even, dropping the overlap terms in $H_{N,\bar \beta}(\bar \sigma)$ yields
    $F_N(\bar \beta)\leq \sum_{r=1}^k F_{N,r}(\beta_r).$ In addition, note that $\e F_{N,r}(\cdot)$ is the same function for every $r$ and it can be checked, by using  Gaussian integration by parts, that its derivative is uniformly bounded as long as $\beta$ stays in a bounded interval. Hence, for every $r\geq 1,$ $\e F_{N,r}(\cdot)$ is a sequence of equicontinuous functions. As a result, the assumption that $\inf_{r\geq k_0}\beta_r>\beta_c$ and $\sup_{r\geq k_0}\beta_r<\infty$ implies that there exist some $\delta>0$ and $N_0\geq 1$ such that $\e F_{N,r}(\beta_r)\leq -\delta$ for all $r\geq k_0$ and $N\geq N_0.$ On the other hand, the Gaussian concentration inequality implies that there exists a universal constant $K>0$ such that
    \begin{align*}
    \p\bigl(\Omega_{N,r,t}\bigr)&\leq Ke^{-t^2N/K},\,\,\forall N\geq 1,r\geq 1,t>0,
    \end{align*}
    where $\Omega_{N,r,t}:=\bigl\{\bigl|F_{N,r}(\beta_r)-\e F_{N,r}(\beta_r)\bigr|\geq t\bigr\}$.
    Fix $0<t\leq \delta/2$. From these, the probability of the event $\cap_{1\leq r\leq k}\Omega_{N,r,t}^c$ is at least $1-kKe^{-t^2N/K}$. Furthermore, as long as $N\geq N_0$ satisfies $k=k(N)>k_0$, on this event,
    \begin{align*}
    \sum_{r=1}^k F_{N,r}(\beta_r)&=\sum_{r=1}^k \bigl(F_{N,r}(\beta_r)-\e F_{N,r}(\beta_r)\bigr)+\sum_{r=1}^k \e F_{N,r}(\beta_r)\\
    &\leq \frac{\delta k}{2}-\sum_{k_0<r\leq k}\delta=\frac{k_0\delta}{2}-\frac{1}{2}\sum_{k_0< r\leq k}\delta,
    \end{align*}
    where the first inequality used $\e F_{N,r}(\beta_r)\leq 0.$
    Hence, from the assumption on $k$ and the Borel-Cantelli lemma, $\limsup_{N\to\infty}F_N(\bar \beta)=\-\infty$ a.s. and this implies that for all $x\in (0,1)$,
    \begin{align*}
    \lim_{N\to\infty}\p\bigl(F_N(\bar \beta)\leq N^{-1}\log x\bigr)=1.
    \end{align*}
    From \eqref{dtv_free:Eq2}, the assertion $(ii)$ follows.
\end{proof}


\section{Establishing Spike Recovery}\label{sub4.4}

We present the proof of Theorem \ref{thm0} in this section. Recall that we handled the detection problem by means of the free energies of the spin glass models defined in Section \ref{sec3}. Our treatment for Theorem~\ref{thm0} will also rely on an auxiliary spin glass model, which arises naturally from the conditional distribution of $u(1),\ldots,u(r)$ given $T_k.$ This allows us to establish the so-called Nishimori identity and connect the MMSE to the free energy associated to this auxiliary spin system.


\subsection{Nishimori Identity} \label{Nishimori}
Recall the probability spaces $(\Lambda_r,\mu_r)$, the product probability space $(\bar \Lambda,\bar \mu)$, and the Hamiltonians $H_{N,\bar \beta}(\bar \s)$ from Section \ref{hd}. Fix a SNR vector $\bar \beta$. For any $t\geq 0,$ define the random tensor $T_k(t)$ by
\begin{align*}
T_k(t)=W+\sqrt{\frac{t}{N^{p-1}}}\sum_{r=1}^k\beta_ru(r)^{\otimes p}.
\end{align*}
For $\bar \s\in \bar \Lambda^N,$ define the auxiliary Hamiltonian
\begin{align}\label{pham}
&H_{N,t,\bar{\beta}}^A(\bar \sigma)=\frac{\sqrt{t}}{N^{(p-1)/2}}\sum_{r=1}^k\beta_r\bigl\la T_k(t),\sigma(r)^{\otimes p}\bigr\ra-\frac{t}{2}\sum_{r,r'=1}^k\beta_r\beta_{r'}NR(\s(r),\s(r'))^p.
\end{align}
For $t\geq 0$, define the auxiliary free energy and Gibbs measure by
\begin{align*}
F_{N}^A(t)&=\frac{1}{N}\log\int e^{H_{N,t}^A(\bar \s)}\bar \mu^{\otimes N}(d\bar \s)
\,\,\mbox{and}\,\,
G_{N,t}^A(d\bar \s)=\frac{e^{H_{N,t}^A(\bar \s)}\bar \mu^{\otimes N}(d\bar \s)}{\int e^{H_{N,t}^A(\bar \s')}\bar \mu^{\otimes N}(d\bar \s')}.
\end{align*}
Denote by $\bar {\sigma}^{1},\bar {\sigma}^2,\ldots$ the i.i.d. samplings from $G_{N,t}^A$ and by $\la \cdot\ra_t^A$ the Gibbs expectation with respect to $G_{N,t}^A.$  

A key observation here is that the distribution of $(u(1),\ldots,u(k))$ conditionally on $T_k(t)$ is described by the Gibbs measure,
\begin{align}\label{add:eq-5}
\p\bigl((u(1),\ldots,u(k))\in \cdot|T_k(t)\bigr)&=G_{N,t}^A(\cdot).
\end{align}
To see this equation, one uses the fact that $W$ is Gaussian and is independent of $u(r)$'s and then express the joint density of $u(1),\ldots,u(k),T_k(t)$ in terms of the Gaussian density, see, e.g., \cite{Chen17}. As a consequence, \eqref{add:eq-5} implies the so-called Nishimori identity, namely,
\begin{align}\label{Nish}
\la f(\bar {\sigma}^1,\ldots,\bar {\sigma}^n,u(1),\ldots,u(k))\ra_t^A= \la f(\bar {\sigma}^1,\ldots,\bar {\sigma}^{n},\bar {\sigma}^{n+1})\ra_t^A
\end{align}
for any bounded measurable function $f$. One may find more general settings, for instance, in \cite{LMLKZ+17}.

Consider the following auxiliary minimum mean square error
\begin{align*}
\MMSE_N^A(\bar \beta, t):=\min_{\hat\theta}\frac{1}{N^p}\sum_{i_1,\ldots,i_p=1}^N\e \Bigl(\sum_{r=1}^k\beta_ru_{i_1}(r)\cdots u_{i_p}(r)-\hat\theta_{i_1,\ldots,i_p}\Bigr)^2,
\end{align*}
where the minimum is taken over all $\mathbb{R}^{N^p}$-valued bounded random variables $\hat\theta=(\hat\theta_{i_1,\ldots,i_p})$ that are generated by the $\sigma$-field $\sigma(T_k(t))$ and are allowed to depend on other randomness independent of both $u_i(r)$'s and $T_k$. The following lemma summarizes some key properties of $\e F_{N}^A(t)$ and relates the auxiliary minimum mean square error to the derivative of the free energy. These were originally discovered in \cite{GWSV,WV}. For completeness, we present their proofs here.

\begin{lemma}\label{add:lem4}
	The following statements hold:
	\begin{itemize}
		\item [$(i)$] $\e F_N^A(t)$ is a nondecreasing, nonnegative, and convex function of $t$.
		\item [$(ii)$] $\frac{d}{dt}\e F_N^A( t)=\frac{1}{2}\sum_{r,r'=1}^k\beta_r\beta_{r'}\e \bigl\la R(\s(r),u(r'))^p\bigr\ra_{t}^A.$
		\item[$(iii)$] $\MMSE_N^A(\bar \beta, t)=\sum_{r,r'=1}^k\beta_r\beta_{r'}\e R(u(r),u(r'))^p-2\frac{d}{dt}\e F_N^A( t).$
	\end{itemize}	
\end{lemma}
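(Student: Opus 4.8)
The plan is to establish (ii) first, via an I--MMSE-type identity obtained from Gaussian integration by parts together with the Nishimori identity \eqref{nline}, and then to read off (i) from the resulting formula for $\tfrac{d}{dt}\e F_N^A(t)$. Since $\Lambda_1,\dots,\Lambda_k$ are bounded, $e^{H_{N,t}^A}$ and its $t$-derivatives are bounded uniformly on compact subsets of $(0,\infty)$, so $\e F_N^A$ is differentiable there and $\tfrac{d}{dt}\e F_N^A(t)=\tfrac1N\,\e\langle\partial_t H_{N,t}^A\rangle_t^A$. Reading $\partial_t H_{N,t}^A$ off the last displayed expression for $H_{N,t}^A$, it splits into a \emph{disorder part} $\tfrac{1}{2\sqrt t\,N^{(p-1)/2}}\sum_r\beta_r\langle Y,\sigma(r)^{\otimes p}\rangle$, a \emph{self-overlap part} $-\tfrac12\sum_{r,r'}\beta_r\beta_{r'}N R(\sigma(r),\sigma(r'))^p$, and a \emph{planted part} $\sum_{r,r'}\beta_r\beta_{r'}N R(\sigma(r),u(r'))^p$. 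I would treat the disorder part by Gaussian integration by parts in the i.i.d.\ entries $Y_{i_1,\dots,i_p}$, using $\partial_{Y_{i_1,\dots,i_p}}H_{N,t}^A=\tfrac{\sqrt t}{N^{(p-1)/2}}\sum_{r'}\beta_{r'}\sigma_{i_1}(r')\cdots\sigma_{i_p}(r')$ and $\sum_{i_1,\dots,i_p}\sigma_{i_1}^a(r)\cdots\sigma_{i_p}^a(r)\,\sigma_{i_1}^b(r')\cdots\sigma_{i_p}^b(r')=N^p R(\sigma^a(r),\sigma^b(r'))^p$; after tracking the powers of $N$ this converts the disorder part into $\tfrac12\sum_{r,r'}\beta_r\beta_{r'}\,\e\bigl[\langle R(\sigma(r),\sigma(r'))^p\rangle_t^A-\langle R(\sigma^1(r),\sigma^2(r'))^p\rangle_t^A\bigr]$ for independent replicas $\bar\sigma^1,\bar\sigma^2$. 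The intra-replica term cancels the self-overlap part, leaving
\begin{align*}
\frac{d}{dt}\e F_N^A(t)=\sum_{r,r'=1}^k\beta_r\beta_{r'}\,\e\bigl\langle R(\sigma(r),u(r'))^p\bigr\rangle_t^A-\frac12\sum_{r,r'=1}^k\beta_r\beta_{r'}\,\e\bigl\langle R(\sigma^1(r),\sigma^2(r'))^p\bigr\rangle_t^A.
\end{align*}
By \eqref{nline}, conditionally on $T_k(t)$ the planted configuration $(u(1),\dots,u(k))$ is an independent sample from $G_{N,t}^A$, so $(\bar\sigma^1,u)$ has the same law as $(\bar\sigma^1,\bar\sigma^2)$ under $\e\langle\cdot\rangle_t^A$; hence $\e\langle R(\sigma^1(r),\sigma^2(r'))^p\rangle_t^A=\e\langle R(\sigma(r),u(r'))^p\rangle_t^A$, and substituting gives $2\tfrac{d}{dt}\e F_N^A(t)=\sum_{r,r'}\beta_r\beta_{r'}\,\e\langle R(\sigma^1(r),\sigma^2(r'))^p\rangle_t^A$.

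To finish (ii) I would match this with the MMSE. Write $\theta_{i_1,\dots,i_p}=\sum_r\beta_r u_{i_1}(r)\cdots u_{i_p}(r)$; the minimizer in $\MMSE_N^A(\bar\beta,t)$ is the posterior mean, which by \eqref{nline} equals $\hat\theta_{i_1,\dots,i_p}:=\langle\sum_r\beta_r\sigma_{i_1}(r)\cdots\sigma_{i_p}(r)\rangle_t^A$. Applying the identity $\e[(X-\e[X\mid\mathcal F])^2]=\e[X^2]-\e[(\e[X\mid\mathcal F])^2]$ coordinatewise,
\begin{align*}
\MMSE_N^A(\bar\beta,t)=\frac1{N^p}\sum_{i_1,\dots,i_p}\e[\theta_{i_1,\dots,i_p}^2]-\frac1{N^p}\sum_{i_1,\dots,i_p}\e[\hat\theta_{i_1,\dots,i_p}^2].
\end{align*}
Collapsing the product over $i_1,\dots,i_p$ turns the first sum into $\sum_{r,r'}\beta_r\beta_{r'}\,\e R(u(r),u(r'))^p$, while expanding $\hat\theta_{i_1,\dots,i_p}^2$ into two replicas turns the second sum into $\sum_{r,r'}\beta_r\beta_{r'}\,\e\langle R(\sigma^1(r),\sigma^2(r'))^p\rangle_t^A=2\tfrac{d}{dt}\e F_N^A(t)$, which is exactly (ii).

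For (i), the identity just obtained gives $\tfrac{d}{dt}\e F_N^A(t)=\tfrac1{2N^p}\sum_{i_1,\dots,i_p}\e[\hat\theta_{i_1,\dots,i_p}^2]\ge 0$, so $\e F_N^A$ is nondecreasing; since $H_{N,0}^A\equiv 0$ forces $\e F_N^A(0)=0$, it is also nonnegative. Convexity is equivalent, through (ii), to $t\mapsto\MMSE_N^A(\bar\beta,t)$ being nonincreasing; this is the standard monotonicity of the Bayes-optimal error along a Gaussian channel, because for $t_1<t_2$ the observation $T_k(t_1)$ can be recovered in law, jointly with the spike, from $T_k(t_2)$ and independent Gaussian randomness (a suitable linear combination of $T_k(t_2)$ with an independent copy of $W$), so the Bayes estimator at $t_1$ is a feasible --- hence no better --- estimator built from $T_k(t_2)$. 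Alternatively, differentiating the displayed derivative formula once more and integrating by parts again expresses $\tfrac{d^2}{dt^2}\e F_N^A(t)$ as a nonnegative combination of Gibbs variances.

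The step that needs genuine care is the Gaussian-integration-by-parts bookkeeping --- keeping straight which overlaps live within a single replica and which couple two replicas, and tracking the powers of $N$ so that the disorder part cancels the self-overlap part exactly --- followed by the clean use of \eqref{nline} to collapse the replica--replica overlaps first into the planted overlap and then into the posterior-mean estimator. The differentiability and interchange-of-limit steps are routine consequences of the boundedness of the priors.
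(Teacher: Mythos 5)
Your proof is correct and follows essentially the same route as the paper's: Gaussian integration by parts plus the Nishimori identity \eqref{nline} to obtain $\tfrac{d}{dt}\e F_N^A(t)=\tfrac12\sum_{r,r'}\beta_r\beta_{r'}\e\langle R(\s^1(r),\s^2(r'))^p\rangle_t^A$, the posterior-mean expansion of the MMSE to derive (ii), and the Gaussian-channel decomposition $T_k(t)$ as a degraded (noisier) version of $T_k(t')$ for $t<t'$ to get monotonicity of the MMSE and hence convexity of $\e F_N^A$. The only cosmetic differences are that you invoke the conditional-variance identity $\e[(X-\e[X\mid\mathcal F])^2]=\e[X^2]-\e[(\e[X\mid\mathcal F])^2]$ in place of expanding the square and collapsing the cross term by Nishimori, and you read the sign of $\tfrac{d}{dt}\e F_N^A$ directly from the formula rather than via the dummy-estimator bound as the paper does; both are equivalent.
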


\begin{proof}
	Using Gaussian integration by parts implies
	\begin{align*}
	\frac{d}{dt}\e F_N^A(t)
	&=\sum_{r,r'=1}^k\beta_r\beta_{r'}\Bigl(-\frac{1}{2}\e \bigl\la R(\s^1(r),\s^2(r'))^p\bigr\ra_{t}^A+\e \bigl\la R(\s(r),u(r'))^p\bigr\ra_{t}^A\Bigr).
	\end{align*}
	From \eqref{Nish}, $(ii)$ follows. To establish $(iii)$, note that the minimizer of $\MMSE_N^A$ is attained by the estimator $$
	\hat\theta_{i_1,\ldots,i_p}^A=\sum_{r=1}^k\beta_r \e\bigl[u_{i_1}(r)\cdots u_{i_p}(r)|T_k(t)\bigr]=\sum_{r=1}^k\beta_r\bigl\la \s_{i_1}\cdots \s_{i_p}\bigr\ra_{t}^A,
	$$
	where the second equality used \eqref{Nish}.
	Plugging this estimator into $\MMSE_N^A(\bar \beta, t)$ and applying \eqref{Nish} yield $(iii).$ 
	
	Finally, we prove $(i).$
	Note that setting $\hat\theta_{i_1,\ldots,i_p}\equiv 0$ gives the upper bound
\[
\MMSE^A_N(\bar \beta, t) \leq \sum_{r,r'=1}^k\beta_r\beta_{r'}\e R(u(r),u(r'))^p.
\]
Combining this with $(iii)$ shows that $\frac{d}{dt}\e F_N^A(t)$  is nonnegative, so $\E F_N^A(t)$ is non-decreasing in $t$. In addition, since $F_N^A(0)=0$, we conclude that $\e F_N^A(t)$ is nonnegative. To establish the convexity of $\e F_N^A$ in $t$, from $(iii)$ it suffices to show that $\MMSE_N^A(\bar \beta, t)$ is nonincreasing in $t$. For any $0\leq t< t'$, write
	\begin{align*}
	\frac{1}{\sqrt{t}}T_k(t)&=\frac{1}{\sqrt{N^{p-1}}}\sum_{r=1}^k\beta_ru^{\otimes p}(r)+\frac{1}{\sqrt{t}}W\stackrel{d}{=}\frac{1}{\sqrt{t'}}T_k(t')+\sqrt{\frac{1}{t}-\frac{1}{t'}}W',
	\end{align*}
	where $W'$ is an independent copy of $W$ and is also independent of $u(1),\ldots,u(k).$ Write
	\begin{align*}
	\e[u_{i_1}\cdots u_{i_p}|T_k(t')]&=\e[u_{i_1}\cdots u_{i_p}|T_k(t'),W']=\e[u_{i_1}\cdots u_{i_p}|T_k(t),T_k(t')].
	\end{align*}
	It follows that
	\begin{align*}
	\MMSE_N^A(\bar \beta, t')
	&=\frac{1}{N^p}\sum_{i_1,\ldots,i_p=1}^N\e\Bigl(\sum_{r=1}^k\beta_r\bigl(u_{i_1}\cdots u_{i_p}-\e[u_{i_1}\cdots u_{i_p}|T_k(t),T_k(t')]\bigr)\Bigr)^2 \\
	&\leq\frac{1}{N^p}\sum_{i_1,\ldots,i_p=1}^N\e\Bigl(\sum_{r=1}^k\beta_r\bigl(u_{i_1}\cdots u_{i_p}-\e[u_{i_1}\cdots u_{i_p}|T_k(t)]\bigr)\Bigr)^2=\MMSE_N^A(\bar \beta, t).
	\end{align*}
	This establishes $(i)$ and completes our proof.
\end{proof}

\subsection{Proof of Theorem \ref{thm0}}
    	We prove Theorem \ref{thm0} $(i)$ first.  Assume that $\bar \beta\in (0,\beta_{1,c})\times\cdots\times (0,\beta_{k,c}).$ From Theorem~\ref{maintheorem}, $d_{TV}(W,T_k)\rightarrow 0$. Note that
	\begin{align*}
	d_{TV}(W,T_k) = \int_0^1 \mathbb{P}\bigl(F_N(\bar \beta) < N^{-1}\log x \bigr)dx=\int_0^1 \mathbb{P}\bigl(F_N^A(1) > -N^{-1}\log x \bigr)dx.
	\end{align*}
	Here the first equality is from Lemma \ref{dtv_free}, while the second equality follows from a similar argument as that for Lemma \ref{dtv_free} by using the second equality in Lemma \ref{dtv}.	
 By Fatou's lemma and the above display, $$\liminf_{N\rightarrow\infty}\mathbb{P}\bigl(F_N^A(1) > -N^{-1}\log x \bigr)\rightarrow 0,\,\,x\in (0,1)$$
	and consequently, $\limsup_{N\rightarrow\infty}\mathbb{P}(B_N(\varepsilon))=1$ for all $\varepsilon\in (0,1)$, where $B_N(\varepsilon):=\{F_N^A(1) \leq \varepsilon\}.$ Therefore, from H\"{o}lder's inequality,
	\begin{align*}
	\e F_N^A(1)&=\e [F_N^A(1);B_N(\varepsilon)]+\e [F_N^A(1);B_N(\varepsilon)^c]\\
	&\leq \varepsilon+\bigl(\e F_N^A(1)^2\bigr)^{1/2}\bigl(\p(B_N(\varepsilon)^c)\bigr)^{1/2}.
	\end{align*}
	Note that since $\mu_1,\ldots,\mu_k$ are defined on bounded sets, one can verify that the second moment of the random variable $F_N^A(1)$ is bounded in $N.$ As a result,
	$$\limsup_{N\to\infty}\e F_N^A(1)\leq 0.$$
	From Lemma \ref{add:lem4} $(i)$, we then conclude that
	$$\lim_{N\rightarrow\infty} \E F_N^A(t)=0,\,\,\forall t\in [0,1].$$
	Now the convexity of $\E F_N^A$ implies that $\lim_{N\to\infty}\frac{d}{dt}\e F_N^A(t)=0$ for $t\in [0,1]$. From Lemma~\ref{add:lem4} $(ii)$ and $(iii)$ and the strong law of large numbers, Theorem \ref{thm0} $(i)$ follows.
	
	Next, we assume that $\bar \beta\not\in (0,\beta_{1,c}]\times \cdots\times(0,\beta_{k,c}].$  For $s\in [0,1]$, define an interpolating free energy by
	\begin{align*}
	F_{N}^I(s)&=\frac{1}{N}\log \int \exp\Bigl(H_{N,\bar \beta}(\bar \s)+s\sum_{r,r'=1}^k\beta_r\beta_{r'}NR(\sigma(r),u(r'))^p\Bigr)\bar \mu^{\otimes N}(d\bar \s).
	\end{align*}
	Note that when $t=1$,
	\begin{align*}
	H_{N,t}^A(\bar \s)&=H_{N,\bar \beta}(\bar \s)+\sum_{r,r'=1}^k\beta_r\beta_{r'}NR(\sigma(r),u(r'))^p.
	\end{align*}
	This implies that $F_N^I(1)=F_N^A(1)$ and $F_N^I(0)=F_N(\bar \beta).$ In addition, from Lemma \ref{add:lem4} $(ii)$, and the convexity of $F_N^I(s)$,
	\begin{align}\label{add:eq14}
	\frac{d}{dt}\e F_N^A(1)=\frac{1}{2}\frac{d}{ds}\e F_N^I(1)\geq \frac{1}{2}\frac{d}{ds}\e F_N^I(s),\,\,\forall s\in[0,1].
	\end{align}
	Note that since $\e F_N^I$ is a family of  equicontinuous and convex functions, one can pass to a subsequence $(N_n)_{n\geq 1}$ via a diagonalization procedure to show that $\e F_N^I$ is pointwise convergent along this subsequence. Furthermore, we can ensure that along this subsequence, $$\lim_{n\rightarrow\infty}\frac{d}{dt}\e F_{N_n}^I(1)=\liminf_{N\to \infty}\frac{d}{dt}\e F_{N}^A(1).$$ Denote $F^I=\lim_{n\to \infty}\e F_{N_n}^I.$
	Note that on the one hand, $\e F_N^A(1)\geq 0$ by Lemma~\ref{add:lem4} $(i)$ and on the other hand, $F(\bar \beta)=\limsup_{N\to\infty}\e F_N(\bar \beta)<0$ by Theorem~\ref{thm6}. Using the identities $F_N^I(1)=F_N^A(1)$ and  $F_N^I(0)=F_N(\bar \beta)$ yields that
    $
	F^I(0)< 0\leq F^I(1).
	$
	Consequently,	there exists some $s_0\in (0,1)$ such that $F^I$ is differentiable at this point and 
	\begin{align*}
	\lim_{n\rightarrow\infty}\frac{d}{ds}\e F_{N_n}^I(s_0)&=\frac{d}{ds}F^I(s_0)>0.
	\end{align*}
	This and \eqref{add:eq14} together yield
	\begin{align*}
	\lim_{n\rightarrow\infty}\frac{d}{dt}\e F_{N_n}^A(1)\geq  \frac{1}{2}\frac{d}{ds}F^I(s_0)>0.
	\end{align*}
	Finally, from this inequality, Lemma \ref{add:lem4} $(ii)$ and $(iii)$, and the strong law of large numbers, the assertion of Theorem \ref{thm0} $(ii)$ follows.


\section{Structure of the Regime $\RR$}\label{sec:high_temp_k1}

In this section, we establish the proof of Proposition \ref{prop1}. It is based on a subtle control of the Parisi formula for the free energy. While a similar argument has appeared in \cite{Chen17} for the case that there is only one spike and it is sampled from the Rademacher prior, our argument here works for more general priors. 


\subsection{The Parisi Formula}\label{sec5.1}

Recall the probability space $(\Lambda,\mu)$ from Section \ref{vec_spin_model}. Denote
\begin{align}\label{V}
\mathcal{V}=\{u^2:u\in \Lambda\}.
\end{align}
Fix $v\in \mathcal{V}$ and let $\mathcal{M}_v$ be the space of probability measures on $[0,v]$. Recall that $\xi(s)=s^p.$ For $\alpha\in \mathcal{M}_v$ and $\lambda\in \mathbb{R}$, define the Parisi functional by
\begin{align*}
\mathcal{P}_{\beta,v}(\alpha,\lambda)&=\Phi_{\beta,v,\alpha}(0,0,\lambda)-\lambda v-\frac{\beta^2}{2}\int_0^v\alpha(s)\xi''(s)sds,
\end{align*}
where $\Phi_{\beta,v,\alpha}(0,0,\lambda)$ is defined as the weak solution of the following PDE on $[0,v] \times \R \times \R$ (see \cite{JT2}):
\begin{align*}
\partial_s\Phi_{\beta,v,\alpha}&=-\frac{\beta^2\xi''}{2}\bigl(\partial_{xx}\Phi_{\beta,v,\alpha}+\alpha \bigl(\partial_x\Phi_{\beta,v,\alpha}\bigr)^2\bigr)
\end{align*}
with the boundary condition
\begin{align*}
\Phi_{\beta,v,\alpha}(v,x,\lambda)&=\log \int e^{xa+\lambda a^2}\mu(da).
\end{align*}
The Parisi formula states that
\begin{align*}
\lim_{N\rightarrow\infty}\frac{1}{N}\log \int e^{\beta X_N(\s)}\mu^{\otimes N}(d\s)=\sup_{v\in \mathcal{V}}\inf_{(\alpha,\lambda)\in \mathcal{M}_v\times\mathbb{R}}\mathcal{P}_{\beta,v}(\alpha,\lambda).
\end{align*}
This formula was initially established by Talagrand \cite{Tal03} for the mixture of even $p$-spin Hamiltonians and $\Lambda=\{-1,1\}.$ Later it was generalized to arbitrary mixtures of pure $p$-spin Hamiltonians including odd $p$ and any probability space $(\Lambda,\mu)$ with bounded $\Lambda\subset \mathbb{R}$ by Panchenko \cite{Panchenko05,Panchenko2015}. 
The following proposition shows that the limiting free energy $F(\beta)$ can also be expressed as a Parisi-type formula.

\begin{prop}[Parisi formula] \label{new:parisiformula:k=1} For any $\beta>0,$
	\begin{align*}
	F(\beta)&:=\lim_{N\rightarrow\infty}F_N(\beta)=\sup_{v\in \mathcal{V}}\inf_{\alpha,\lambda}\mathcal{Q}_{\beta,v}(\alpha,\lambda),
	\end{align*}
	where for $(\alpha,\lambda)\in \mathcal{M}_v\times \mathbb{R},$
	$$
	\mathcal{Q}_{\beta,v}(\alpha,\lambda):=\mathcal{P}_{\beta,v}(\alpha,\lambda)-\frac{\beta^2v^p}{2}.
	$$
\end{prop}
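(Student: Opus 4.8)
The plan is to deduce Proposition~\ref{new:parisiformula:k=1} from the classical Parisi formula for the free energy of $\beta X_N$ (stated just above the proposition) by carefully accounting for the extra self-overlap term $-\tfrac{\beta^2N}{2}R(\s,\s)^p$ that appears in the normalized Hamiltonian $H_{N,\beta}$. First I would observe that, unlike the Ising case $\Lambda=\{-1,1\}$ where $R(\s,\s)=1$ is deterministic, here $R(\s,\s)=\tfrac1N\sum_i\s_i^2$ fluctuates and must be controlled inside the partition function. The natural device is to fix the value of the self-overlap: for $v\in\mathcal V$ (or more precisely $v$ in the convex hull of $\mathcal V$, which is where $R(\s,\s)$ concentrates), one restricts the integral $\int e^{\beta X_N(\s)}\mu^{\otimes N}(d\s)$ to the set $\{\s: R(\s,\s)\approx v\}$. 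On that set the self-overlap term is essentially the constant $-\tfrac{\beta^2 N}{2}v^p$, so $F_N(\beta)$ should asymptotically equal $\sup_v\bigl(F_N^{(v)}(\beta)-\tfrac{\beta^2 v^p}{2}\bigr)$, where $F_N^{(v)}$ is the constrained free energy of $\beta X_N$ alone. Recognizing $\xi(v)=v^p$ (note $\xi(s)=s^p$ here, with the $1/2$ absorbed differently than the abstract $\xi(s)=s^p/2$ of the SNR side), this gives the claimed $\mathcal Q_{\beta,v}=\mathcal P_{\beta,v}-\tfrac{\beta^2 v^p}{2}$.

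The key steps, in order, would be: (1) Establish existence of $\lim_N F_N(\beta)$ as a nonrandom quantity — this follows from Gaussian concentration of measure for $F_N(\beta)$ (Lipschitz in the disorder $Y$ with the right constant) plus the convergence of $\e F_N(\beta)$; for even $p$ the latter is standard subadditivity, and for odd $p$ with $k=1$ one can still push it through via Panchenko's Parisi formula and a careful interpolation, which is exactly what the excerpt alludes to. (2) Introduce the constrained free energy $F_N^{(v)}(\beta)=\tfrac1N\log\int_{\{|R(\s,\s)-v|<\epsilon\}}e^{\beta X_N(\s)}\mu^{\otimes N}(d\s)$ and show a Parisi-type formula for it: the constrained Parisi formula of Talagrand/Panchenko gives $\lim_N F_N^{(v)}(\beta)=\inf_{(\alpha,\lambda)\in\mathcal M_v\times\R}\mathcal P_{\beta,v}(\alpha,\lambda)$, where the Lagrange multiplier $\lambda$ enforces the constraint (this is why the boundary condition carries the $e^{\lambda a^2}$ factor and the functional has the $-\lambda v$ term). (3) Combine: since $R(\s,\s)\in[\,\inf\mathcal V\cdot 0,\ \sup\mathcal V\,]$ always and the number of relevant $v$-values can be discretized at scale $\epsilon$ with only $\log$-order cost, a standard covering/union-bound argument yields $F_N(\beta)=\sup_{v}\bigl(F_N^{(v)}(\beta)-\tfrac{\beta^2}{2}v^p\bigr)+o(1)$, and passing to the limit plus taking $\epsilon\to0$ (using continuity of $v\mapsto\inf_{\alpha,\lambda}\mathcal P_{\beta,v}(\alpha,\lambda)$) gives the stated formula with the supremum over $v\in\mathcal V$ (or its closed convex hull; one checks these agree because the map is continuous and $\mathcal M_v$, $\mathcal P_{\beta,v}$ vary continuously).

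I expect the main obstacle to be step (2) together with the precise bookkeeping of which set $v$ ranges over. One must invoke the correct version of the constrained Parisi formula — the free energy of $\beta X_N$ with the self-overlap pinned to $v$ — and verify that the functional $\mathcal P_{\beta,v}(\alpha,\lambda)$ written via the weak solution $\Phi_{\beta,v,\alpha}$ of the Parisi PDE is genuinely the right one, including the identification of the linear term $-\tfrac{\beta^2}{2}\int_0^v\alpha(s)\xi''(s)s\,ds$ and the Lagrangian term $-\lambda v$. The delicate points are: the PDE is posed on $[0,v]\times\R\times\R$ with a $v$-dependent terminal time, so one needs continuity/stability of $\Phi_{\beta,v,\alpha}(0,0,\lambda)$ jointly in $(v,\alpha,\lambda)$ to justify interchanging limits, discretization, and the $\epsilon\to0$ passage; and for odd $p$ one cannot rely on Guerra's interpolation upper bound directly, so the argument must cite Panchenko's cavity-method treatment. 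A secondary subtlety is confirming that the supremum may be taken over $\mathcal V=\{u^2:u\in\Lambda\}$ rather than its convex hull — the point being that $R(\s,\s)$ under $\mu^{\otimes N}$ concentrates at $v_*=\int a^2\mu(da)\in\conv(\mathcal V)$, but the variational problem's supremum is nonetheless attained (and unchanged) when restricted as stated, which one argues by monotonicity/continuity of the constrained free energy in $v$. None of these steps requires new ideas beyond the literature, so the proof will largely consist of assembling the constrained Parisi formula with the self-overlap decomposition; the write-up's burden is making the discretization and limit-interchange rigorous.
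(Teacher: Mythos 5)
Your proposal follows essentially the same route as the paper's proof: both invoke Panchenko's constrained Parisi formula for the self-overlap--restricted free energy $F_N(\beta,A_\eta(v))$, then remove the restriction by passing to a finite open cover of the attainable overlap values (with only a $\tfrac{\log n}{N}$ cost) and letting the cover radius and $\delta$ shrink, using continuity of $v\mapsto\inf_{\alpha,\lambda}\mathcal{Q}_{\beta,v}(\alpha,\lambda)$. Your separate preliminary ``Step (1)'' establishing existence of $\lim_N F_N(\beta)$ is in fact redundant in the paper's version, since the two-sided covering bounds already sandwich $F_N(\beta)$ between nonrandom quantities; and your concern about $\mathcal{V}$ versus $\overline{\conv}(\mathcal{V})$ is a legitimate bookkeeping subtlety that the paper glosses over, but it does not change the argument, exactly as you anticipate.
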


\begin{proof} For any measurable $A\subset \mathcal{V}$, define the free energy restricted to $A$ by
	\begin{align*}
	F_N(\beta,A)&=\frac{1}{N}\log \int_{R(\s,\s)\in A}e^{H_{N,\beta}(\s)}\mu^{\otimes N}(d\s).
	\end{align*}
	For any $\eta>0$ and $v\in \mathcal{V}$, set $A_\eta(v)=(v-\eta,v+\eta).$
	Note that it is already known from \cite{Panchenko2015} that for any $v\in \mathcal{V}$,
	\begin{align*}
	\lim_{\eta\downarrow 0}\lim_{N\rightarrow\infty}F_N\bigl(\beta,A_\eta(v)\bigr)&=\inf_{\mathcal{M}_v\times \mathbb{R}}\mathcal{Q}_{\beta,v}(\alpha,\lambda).
	\end{align*}
	From this, for any $\delta>0,$ there exist $\eta(v)$ and $N(v)$ such that for any $N\geq N(v)$
	\begin{align}\label{new:prop1:proof:eq1}
	\Bigl|F_N\bigl(\beta,A_\eta(v)\bigr)- \inf_{\mathcal{M}_v\times \mathbb{R}}\mathcal{Q}_{\beta,v}(\alpha,\lambda)\Bigr|\leq\delta.
	\end{align}
	Note that $\mathcal{V}$ is bounded and that for any $\eta>0,$ $(A_\eta(v):v \in \mathcal{V})$ forms an open covering for the closure of $\mathcal{V}$. From these, we can pass to a finite covering,  $A_\eta(v_j)$ for $1\leq j\leq n$, such that \eqref{new:prop1:proof:eq1} is valid. From this,
	\begin{align*}
	F_N(\beta,A_\eta(v_j))
	\leq F_N(\beta)
	\leq \frac{1}{N}\log \sum_{j=1}^n \exp NF_N(\beta,A_\eta(v_j))
	\leq \frac{\log n}{N}+\max_{1\leq j\leq n}F_N(\beta,A_\eta(v_j))
	\end{align*}
	and hence, as long as $N$ is large enough,
	\begin{align*}
	\inf_{\mathcal{M}_v\times \mathbb{R}}\mathcal{Q}_{\beta,v}(\alpha,\lambda)-\delta\leq F_N(\beta)&\leq \max_{1\leq j\leq n}\inf_{\mathcal{M}_{v_j}\times \mathbb{R}}\mathcal{Q}_{\beta,v_j}(\alpha,\lambda)+2\delta.
	\end{align*}
	Thus,
	\begin{align*}
	\max_{1\leq j\leq n}\inf_{\mathcal{M}_{v_j}\times \mathbb{R}}\mathcal{Q}_{\beta,v_j}(\alpha,\lambda)-\delta&\leq \liminf_{N\rightarrow\infty}F_N(\beta)\\
	&
	\leq \limsup_{N\rightarrow\infty}F_N(\beta)\leq \max_{1\leq j\leq n}\inf_{\mathcal{M}_{v_j}\times \mathbb{R}}\mathcal{Q}_{\beta,v_j}(\alpha,\lambda)+2\delta.
	\end{align*}
	This completes our proof by letting $\delta\downarrow 0$ and noting that $\inf_{\mathcal{M}_v\times \mathbb{R}}\mathcal{Q}_{\beta,v}(\alpha\lambda)$ is continuous in $v$.
\end{proof}

\subsection{Two Technical Lemmas}\label{sec5.2}
Recall $\Gamma_b$ from \eqref{add:eq1} and $\gamma_b$ from \eqref{auxfun}. The following technical inequality establishes the strict monotonicity of $\gamma_b$ in the temperature parameter $b$. This will be of great importance for the rest of this section as well as in Section \ref{sec6}.  

\begin{lemma}\label{lem3}
	If $0<\beta<\beta'$, then $\gamma_{\beta}(s)<\gamma_{\beta'}(s)$ for all $s>0.$
\end{lemma}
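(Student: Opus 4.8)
The plan is to recognize $\gamma_b(s)$ as a covariance-type quantity along the geometric Brownian motion and then differentiate in $b$, exploiting the fact that increasing $b$ increases the "variance parameter" $b\xi'(s)$ driving $Z(a,\cdot)$. Fix $s>0$ and write $t=t(b)=b\xi'(s)$, so that $t$ is strictly increasing in $b$ (since $\xi'(s)=ps^{p-1}/2>0$ for $s>0$). Introduce the tilted probability measure $\nu_t$ on $\Lambda$ defined by $d\nu_t(a) = Z(a,t)\,d\mu(a)/\int Z(a',t)\,d\mu(a')$; a direct computation using $\e[Z(a,t)]=1$ and $\e[Z(a,t)Z(a',t)]=e^{aa't}$ shows that
\[
\gamma_b(s) = \e\Bigl[\Bigl(\int a\,d\nu_t(a)\Bigr)^2\Bigr] \cdot (\text{correction})
\]
— more precisely, after carrying out the Gaussian expectations one obtains a clean closed form, and the natural route is to show $\tfrac{d}{dt}\,\gamma$ (viewed as a function of $t$) is strictly positive, whence strict monotonicity in $b$ follows by the chain rule.

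Concretely, I would first compute $g(t) := \e\bigl[(\int a Z(a,t)\mu(da))^2 / \int Z(a,t)\mu(da)\bigr]$ and use Gaussian integration by parts (differentiating the $B_t$-dependence) together with the Itô/heat-equation identity $\partial_t Z(a,t) = \tfrac12(\partial_{xx})$-type relations, to express $g'(t)$ as the expectation of a manifestly nonnegative quantity. The anticipated outcome is that $g'(t)$ equals (a constant multiple of) the expected value of a conditional variance — something of the shape $\e\bigl[\operatorname{Var}_{\langle\cdot\rangle}(\text{spin})\bigr]$ or $\e[(\langle a^2\rangle - \langle a\rangle^2)\cdot(\cdots)]$ under the random tilted measure — which is strictly positive unless $\mu$ is a point mass. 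Since $\mu$ is centered (and not a point mass at $0$, else the statement is vacuous as all $\gamma_b\equiv 0$ and also $v_*=0$), strict positivity holds. Then $\tfrac{d}{db}\gamma_b(s) = g'(t(b))\cdot t'(b) = g'(t(b))\cdot \xi'(s) > 0$, giving $\gamma_\beta(s) < \gamma_{\beta'}(s)$ for $0<\beta<\beta'$.

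An alternative, possibly cleaner, route avoids differentiation entirely: represent $Z(a,t')$ for $t'>t$ as $Z(a,t)\cdot Z'(a,t'-t)$ where $Z'$ is driven by an independent Brownian motion, so that passing from $t$ to $t'$ amounts to an additional independent Gaussian tilt. One can then couple the two tilted measures and apply a strict Jensen / FKG-type inequality (the map $a\mapsto a$ is non-constant on the support of the non-degenerate tilted measure) to conclude the strict inequality directly. This sidesteps any PDE manipulation at the cost of setting up the coupling carefully.

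The main obstacle I expect is establishing \emph{strictness} rather than the weak inequality. The weak inequality $\gamma_\beta \le \gamma_{\beta'}$ should fall out of a convexity/Jensen argument almost immediately, but ruling out equality requires showing that the relevant conditional variance is strictly positive on a set of positive probability — i.e., that the random tilted measure $\nu_t$ is genuinely non-degenerate with positive probability. This is where the hypothesis that $\mu$ is not a Dirac mass (implicit, since otherwise $v_*>0$ forces $\mu=\delta_0$ trivially and the claim is empty, or $\mu$ is centered and non-trivial) must be used. In the Rademacher case treated in \cite{Chen17} this was handled by explicit symmetry; here the argument must instead track the support of $\mu$ through the Gaussian tilting and argue that tilting by $e^{aB_t - a^2 t/2}$ preserves non-degeneracy of the support, which requires a short but non-trivial measure-theoretic argument.
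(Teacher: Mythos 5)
Your proposal follows essentially the paper's own route: the paper sets $g_j(t,x)=\int a^j e^{ax-a^2t/2}\mu(da)$, applies It\^{o}'s formula to $g_1(t,B_t)^2/g_0(t,B_t)$, and finds the drift equals $g_0\bigl(g_1^2/g_0^2 - g_2/g_0\bigr)^2\,dt$ --- the tilted variance \emph{squared}, times $g_0$ --- which is manifestly nonnegative, so the process is a submartingale; strictness is then obtained exactly as you anticipate, via the equality case of Jensen's inequality forcing $\Lambda$ to be a singleton, contradicting the assumption that $\mu$ is centered and nondegenerate. The only small deviations from what you sketched are that the drift turns out to be the square of the tilted variance rather than the variance itself, and the non-degeneracy step is dispatched in two lines (not the ``non-trivial measure-theoretic argument'' you flagged); otherwise this is the paper's proof.
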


\begin{proof}
	Note that $\gamma_\beta(0)=0.$
	Let $B_t$ be a standard Brownian motion. Define
	\begin{align*}
	g_j(t,x)&=\int a^je^{ax-\frac{a^2t}{2}}\mu(da),\,\,\forall j=0,1,2,3.
	\end{align*}
	Set $X_t=g_1(t,B_t)^2$ and $Y_t=g_0(t,B_t)^{-1}.$ Note that $\gamma_\beta(s)=\e X_tY_t$ if we let $t=\beta^2\xi'(s).$ From It\^{o}'s formula,
	\begin{align*}
	dX_t&=2g_1 \partial_tg_1 dt+2g_1 \partial_xg_1 dB_t+\Bigl(g_1 \partial_{xx}g_1 +\bigl(\partial_xg_1 \bigr)^2\Bigr)dt\\
	&=-g_1 g_3 dt+2g_1 g_2 dB_t+\Bigl(g_1 g_3 +\bigl(g_2 \bigr)^2\Bigr)dt=\bigl(g_2 \bigr)^2dt+2g_1 g_2 dB_t
	\end{align*}
	and
	\begin{align*}
	dY_t&=-\frac{\partial_tg_0 }{g_0 ^2}dt-\frac{\partial_xg_0 }{g_0 ^2}dB_t-\frac{1}{2}\Bigl(\frac{\partial_{xx}g_0 }{g_0 ^2}-\frac{2\bigl(\partial_xg_0 \bigr)^2}{g_0 ^3}\Bigr)dt\\
	&=\frac{g_2 }{2g_0 ^2}dt-\frac{g_1 }{g_0 ^2}dB_t-\frac{1}{2}\Bigl(\frac{g_2 }{g_0 ^2}-\frac{2\bigl(g_1 \bigr)^2}{g_0 ^3}\Bigr)dt=\frac{\bigl(g_1 \bigr)^2}{g_0 ^3}dt-\frac{g_1 }{g_0 ^2}dB_t.
	\end{align*}
	Now from the product rule,
	\begin{align*}
	d(X_tY_t)&=X_tdY_t+Y_tdX_t+d\bigl\< X_t,Y_t\bigr\>\\
	&=g_1^2\Bigl(\frac{g_1^2}{g_0^3}dt-\frac{g_1}{g_0^2}dB_t\Bigr)+g_0^{-1}\Bigl(\bigl(g_2\bigr)^2dt+2g_1g_2dB_t\Bigr)-\frac{2g_1^2g_2}{g_0^2}dt\\
	&=\Bigl(\frac{g_1^4}{g_0^3}+\frac{g_2^2}{g_0}-\frac{2g_1^2g_2}{g_0^2}\Bigr)dt+\Bigl(-\frac{g_1^3}{g_0^2}+\frac{2g_1g_2}{g_0}\Bigr)dB_t\\
	&=g_0\Bigl(\frac{g_1^2}{g_0^2}-\frac{g_2}{g_0}\Bigr)^2dt+\Bigl(-\frac{g_1^3}{g_0^2}+\frac{2g_1g_2}{g_0}\Bigr)dB_t,
	\end{align*}
	where $\la\cdot,\cdot\ra$ is the quadratic variation and the third equality follows from the identity
	\begin{align*}
	 \frac{g_1^4}{g_0^3}+\frac{g_2^2}{g_0}-\frac{2g_1^2g_2}{g_0^2}&=g_0\bigl(\frac{g_1^4}{g_0^4}+\frac{g_2^2}{g_0^2}-2\frac{g_1^2}{g_0^2}\frac{g_2}{g_0}\bigr)=g_0\Bigl(\frac{g_1^2}{g_0^2}-\frac{g_2}{g_0}\Bigr)^2.
	\end{align*}
	From this, we conclude that $X_tY_t$ is a submartingale and thus $\e X_tY_t\leq \e X_{t'}Y_{t'}$ for any $0\leq t< t'.$
	
If equality holds for some $0\leq t<t',$ then
	\begin{align*}
	\int_{t}^{t'}\e\Bigl[g_0(s,B_s)\Bigl(\frac{g_1(s,B_s)^2}{g_0(s,B_s)^2}-\frac{g_2(s,B_s)}{g_0(s,B_s)}\Bigr)^2\Bigr]ds=\e X_{t'}Y_{t'}-\e X_tY_t=0.
	\end{align*}
	This implies that
	\begin{align*}
	\left(\frac{\int_\Lambda ae^{aB_s-\frac{a^2s}{2}}\mu(da)}{\int_\Lambda e^{aB_s-\frac{a^2s}{2}}\mu(da)}\right)^2=\frac{g_1(s,B_s)^2}{g_0(s,B_s)^2}=\frac{g_2(s,B_s)}{g_0(s,B_s)}=\frac{\int_\Lambda a^2e^{aB_s-\frac{a^2s}{2}}\mu(da)}{\int_\Lambda e^{aB_s-\frac{a^2s}{2}}\mu(da)}
	\end{align*}
	for all $t\leq s\leq t'.$ From this, the necessary condition for obtaining equality in Jensen's inequality implies that
	\begin{align*}
	a'=\frac{\int_\Lambda ae^{aB_s-\frac{a^2s}{2}}\mu(da)}{\int_\Lambda e^{aB_s-\frac{a^2s}{2}}\mu(da)},\,\,\forall a'\in\Lambda.
	\end{align*}
	The above equation implies that $\Lambda$ consists of a single element, which contradicts the assumption that $\mu$ is centered and $\Lambda$ contains more than one element. Therefore, $EX_tY_t<EX_{t'}Y_{t'}$ for any $0\leq t< t'.$ Finally, for $s>0$ and $0\leq \beta<\beta'$,  plugging $t=\beta^2\xi'(s)$ and $t'={\beta'}^2\xi'(s)$ into this inequality yields
	$
	\gamma_\beta(s)<\gamma_{\beta'}(s).
	$
\end{proof}

Recall the constant $v_*$ from \eqref{q1}. Set the parameter $$\lambda_*=-\frac{\beta^2\xi'(v_*)}{2}=-\frac{\beta^2 pv_*^{p-1}}{2}.$$ 
Recall the Parisi formula from Proposition \ref{new:parisiformula:k=1}. For any $v\in [0,v_*],$ define $\alpha_v\in \mathcal{M}_v$ by $\alpha_v(s)=1$ for $s\in [0,v].$ The next lemma studies some variational properties of the functional $\mathcal{Q}_{\beta,v}$ defined in Proposition \ref{new:parisiformula:k=1}.

\begin{lemma} \label{new:lem3}
	The following two statements hold:
	\begin{itemize}
		\item[$(i)$] If $v\neq v_*$, then $\inf_{\lambda}\mathcal{Q}_{\beta,v}(\alpha_v,\lambda)<0.$
		\item[$(ii)$] If $v= v_*$, then $\inf_{\lambda}\mathcal{Q}_{\beta,v}(\alpha_v,\lambda)=0$ and $\lambda_*$ is a minimizer.
	\end{itemize}
\end{lemma}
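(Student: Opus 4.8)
The plan is to reduce both parts of the lemma to an explicit one-parameter minimization by solving the Parisi PDE exactly at the replica-symmetric order parameter $\alpha_v$. Since $\alpha_v(s)=1$ for all $s\in[0,v]$, the Cole--Hopf substitution $\Psi=\exp\Phi_{\beta,v,\alpha_v}$ linearizes the Parisi PDE into the backward heat equation $\partial_s\Psi=-\tfrac{\beta^2\xi''(s)}{2}\,\partial_{xx}\Psi$ with terminal data $\Psi(v,x,\lambda)=\int e^{xa+\lambda a^2}\mu(da)$; this is the classical device that makes the replica-symmetric Parisi functional computable in closed form. Reparametrizing time by $s\mapsto\beta^2(\xi'(v)-\xi'(s))$ turns it into a forward heat equation run over $[0,\beta^2\xi'(v)]$ (using $\xi'(0)=0$), so propagating the terminal data along Brownian paths gives
\[
\Phi_{\beta,v,\alpha_v}(0,0,\lambda)=\log\e\!\left[\int e^{aB_{\beta^2\xi'(v)}+\lambda a^2}\,\mu(da)\right].
\]

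Next I would apply the shift $\lambda=h-\tfrac{\beta^2\xi'(v)}{2}$. The exponent then reads $ha^2+\bigl(aB_{\beta^2\xi'(v)}-\tfrac{a^2\beta^2\xi'(v)}{2}\bigr)=ha^2+\log Z\bigl(a,\beta^2\xi'(v)\bigr)$, and since $B$ is independent of $a$ and $\e\,Z(a,t)=1$ for every $t$, Fubini collapses the Brownian expectation: $\Phi_{\beta,v,\alpha_v}(0,0,\lambda)=\log\int e^{ha^2}\mu(da)$. Combined with the integration-by-parts identity $\tfrac{\beta^2}{2}\int_0^v\xi''(s)s\,ds+\tfrac{\beta^2v^p}{2}=\tfrac{\beta^2}{2}\bigl(v\xi'(v)-\xi(v)\bigr)+\tfrac{\beta^2\xi(v)}{2}=\tfrac{\beta^2}{2}v\xi'(v)$, the entire replica-symmetric functional telescopes to
\[
\mathcal{Q}_{\beta,v}(\alpha_v,\lambda)=\varphi_v(h),\qquad \varphi_v(h):=\log\int e^{ha^2}\mu(da)-hv,\qquad h:=\lambda+\tfrac{\beta^2\xi'(v)}{2}.
\]
Hence $\inf_{\lambda}\mathcal{Q}_{\beta,v}(\alpha_v,\lambda)=\inf_{h\in\R}\varphi_v(h)$, and the value $\lambda=\lambda_*=-\tfrac{\beta^2\xi'(v_*)}{2}$ corresponds precisely to $h=0$ in the case $v=v_*$.

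It then remains to study the scalar function $\varphi_v$. Because $\Lambda$ is bounded, $\varphi_v$ is smooth, and being the cumulant generating function of $a^2$ under $\mu$ minus a linear term it is convex. One computes $\varphi_v(0)=\log 1=0$ and $\varphi_v'(0)=\int a^2\mu(da)-v=v_*-v$. For part $(ii)$: when $v=v_*$ the derivative $\varphi_{v_*}'(0)$ vanishes, so $h=0$ is a critical point of the convex function $\varphi_{v_*}$, hence its global minimizer, which gives $\inf_{\lambda}\mathcal{Q}_{\beta,v_*}(\alpha_{v_*},\lambda)=\varphi_{v_*}(0)=0$ and identifies the minimizing $\lambda$ as $\lambda_*$. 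For part $(i)$: when $v\neq v_*$ we have $\varphi_v'(0)=v_*-v\neq 0$, so by continuity of $\varphi_v'$ one gets $\varphi_v(h)=-\int_h^0\varphi_v'(t)\,dt<0$ for every $h$ of small absolute value with $\sign(h)=-\sign(v_*-v)$; in particular $\inf_{h}\varphi_v(h)<0$.

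The conceptual content is entirely in the first two paragraphs; the last one is routine one-variable convex analysis. I expect the main (and essentially only) obstacle to be the rigorous justification of the Cole--Hopf step — verifying that the weak solution of the Parisi PDE in the sense of \cite{JT2} at the step function $\alpha_v$ really is the heat-semigroup expression above, so that the subsequent algebra and the differentiation in $\lambda$ under the integral sign are legitimate — together with the bookkeeping needed to keep the normalization conventions ($\xi(s)=s^p$, the factors $\beta^2/2$ appearing both in the PDE and in $\mathcal{P}_{\beta,v}$, and the martingale normalization $\e\,Z(a,t)=1$) consistent throughout.
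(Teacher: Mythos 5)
Your proposal is correct and follows essentially the same route as the paper's proof: both reduce $\mathcal{Q}_{\beta,v}(\alpha_v,\lambda)$ via the replica-symmetric closed form (you derive it explicitly through Cole--Hopf and the time-change $s\mapsto\beta^2(\xi'(v)-\xi'(s))$, while the paper states it directly) and the shift $h=\lambda+\beta^2\xi'(v)/2$, arriving at the same convex one-variable function $\varphi_v(h)=\log\int e^{ha^2}\mu(da)-hv$ with $\varphi_v(0)=0$ and $\varphi_v'(0)=v_*-v$, after which both conclude by the identical first-order/convexity argument. The only difference is expository.
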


\begin{proof}
	Note that
	\begin{align*}
	\mathcal{P}_{\beta,v}(\alpha_v,\lambda)&=\log \e \int\exp \Bigl(\beta z\sqrt{\xi'(v)}a+\lambda a^2\Bigr)\mu(da)-\lambda v-\frac{\beta^2}{2}\int_0^v\xi''(s)sds\\
	&=\log \int \exp \Bigl(\frac{\beta^2pv^{p-1}}{2}a^2+\lambda a^2\Bigr)\mu(da)-\lambda v-\frac{\beta^2(p-1)}{2}v^p,
	\end{align*}
	where $z$ is standard Gaussian. Consequently,
	\begin{align*}
	\inf_{\lambda}\mathcal{Q}_{\beta,v}(\alpha_v,\lambda)
	&=\inf_{\lambda}\Bigl(\log \int \exp\bigl(\lambda a^2\bigr)\mu(da)-\lambda v\Bigr),
	\end{align*}
	where the right-hand side is obtained through a change of variable $\lambda \mapsto \lambda-\beta^2  \xi'(v)/2$. Define
	\begin{align*}
	F(v,\lambda)&=\log \int \exp\bigl(\lambda a^2\bigr)\mu(da)-\lambda v.
	\end{align*}
	Note that H\"older's inequality implies that $F(v,\cdot)$ is convex. If $v=v_*,$ then $\partial_\lambda F(v,0)=0$ and thus $\lambda = 0$ is a minimizer of $F(v,\cdot).$ Recalling the substitution $\lambda \mapsto \lambda - \beta^2\xi'(v)/2$, this means that property $(ii)$ holds. To show $(i)$, note that $F(v,0)=0.$ If $v\neq v_*$, then $\partial_\lambda F(v,0)=v_*-v\neq 0$. This means that $0$ is not a minimizer of $F(v,\cdot)$ and therefore $\inf_\lambda F(v,\lambda)<0.$ This implies property $(i)$ and completes our proof.
\end{proof}

\subsection{Proof of Proposition \ref{prop1}}\label{sec5.3}

First, we prove that for $\beta>0$, $\beta\in \mathcal{R}$ if and only if $\sup_{v\in (0,v_*]}\Gamma_\beta(v)\leq 0.$ Let $\beta \in \RR$. From Proposition \ref{new:parisiformula:k=1}, 
\[
0=F(\beta)=\sup_v\inf_{\alpha,\lambda}\mathcal{Q}_{\beta,v}(\alpha,\lambda).
\]
From Lemma \ref{new:lem3} $(i)$, we see that for any $v\neq v_*,$
\begin{align*}
\inf_{\alpha,\lambda}\mathcal{Q}_{\beta,v}(\alpha,\lambda)\leq \inf_{\lambda}\mathcal{Q}_{\beta,v}(\alpha_v,\lambda)<0,
\end{align*}
which implies that
\[
\sup_v\inf_{\alpha,\lambda}\mathcal{Q}_{\beta,v}(\alpha,\lambda) = \inf_{\alpha,\lambda}\mathcal{Q}_{\beta,v_*}(\alpha,\lambda) = 0.
\]
From this and Lemma \ref{new:lem3} $(ii)$, we conclude that $(\alpha_{v_*},\lambda_*)$ is an optimizer of $\mathcal{Q}_{\beta,v_*}$. Now we use this conclusion to show that $\beta$ must satisfy $\sup_{v\in (0,v_*]}\Gamma_\beta(v)\leq 0$ as follows. Note that
\begin{align*}
\mathcal{Q}_{\beta,v_*}(\alpha,\lambda_*)&=\Phi_{\beta,v_*,\alpha}(0,0,\lambda_*)-\frac{\beta^2}{2}\int_0^{v_*}\alpha(s)\xi''(s)ds+\frac{\beta^2(p-1)v_*^p}{2}.
\end{align*}
Since the boundary condition $\Phi_{\beta,v_*,\alpha}(v_*,x,\lambda)$ is convex in $(x,\lambda),$ an argument identical to that in \cite{AC14} yields that $(\alpha,\lambda)\in \mathcal{M}_{v_*}\times\mathbb{R}\mapsto \mathcal{Q}_{\beta,v_*}(\alpha,\lambda)$ is a convex functional. For any $(\alpha,\lambda)\in \mathcal{M}_{v_*}\times\mathbb{R}$ and $\theta\in [0,1]$, set
\begin{align*}
\alpha_\theta&:=(1-\theta)\alpha_{v_*}+\theta\alpha\,\,\mbox{and}\,\,\lambda_\theta:=(1-\theta)\lambda_*+\theta\lambda.
\end{align*}
The directional derivative of $\mathcal{Q}_{\beta,v_*}$ at $(\alpha_{v_*},\lambda_*)$ can be computed as (see, e.g., \cite[Theorem 2]{C14} and the derivation of \eqref{add:eq--1} below),
\begin{align*}
\frac{d}{d\theta}\mathcal{Q}_{\beta,v_*}\bigl(\alpha_{\theta},\lambda_{\theta}\bigr)\Big|_{\theta=0}&=\frac{\beta^2}{2}\int_0^{v_*}\xi''(s)\bigl(\alpha(s)-\alpha_{v_*}(s)\bigr)\bigl(\gamma_\beta(s)-s\bigr)ds+\Bigl(\int a^2\mu(da)-v_*\Bigr)(\lambda-\lambda_*)\\
&=\frac{\beta^2}{2}\int_0^{v_*}\xi''(s)\bigl(\alpha(s)-\alpha_{v_*}(s)\bigr)\bigl(\gamma_\beta(s)-s\bigr)ds,
\end{align*}
where the derivative is from the right-hand side of $0$. As a result, the optimality of $(\alpha_{v_*},\lambda_*)$ implies that the last line of the above display is nonnegative.
Write
\begin{align*}
&\int_0^{v_*}\xi''(s)\bigl(\alpha(s)-\alpha_{v_*}(s)\bigr)\bigl(\gamma_\beta(s)-s\bigr)ds\\
&=\int_0^{v_*}\int_0^s\xi''(s)\bigl(\gamma_\beta(s)-s\bigr)\alpha(da)ds-\int_0^{v_*}\xi''(s)\bigl(\gamma_\beta(s)-s\bigr)ds\\
&=\int_0^{v_*}\Bigl(\int_{a}^{v_*}\xi''(s)\bigl(\gamma_\beta(s)-s\bigr)ds \Bigr)\alpha(da)-\int_0^{v_*}\xi''(s)\bigl(\gamma_\beta(s)-s\bigr)ds.
\end{align*}	
From this, the optimality of $(\alpha_{v_*},\lambda_*)$ is equivalent to
\begin{align*}
\int_{v}^{v_*}\xi''(s)\bigl(\gamma_\beta(s)-s\bigr)ds\geq \int_0^{v_*}\xi''(s)\bigl(\gamma_\beta(s)-s\bigr)ds,\,\,\forall v\in [0,v_*],
\end{align*}
and hence, this is also equivalent to $\Gamma_\beta(v)\leq 0$ for all $v\in[0,v_*]$. Conversely, if $\Gamma_\beta(v)\leq 0$ for all $v\in[0,v^*]$, then this inequality implies that the above directional derivative of $\mathcal{Q}_{\beta,v_*}$ is nonnegative. This means that $(\alpha_{v_*},\lambda_*)$ is an optimizer of the variational problem $\inf_{\alpha,\lambda}\mathcal{Q}_{\beta,v_*}(\alpha,\lambda)$ and $\inf_{\alpha,\lambda}\mathcal{Q}_{\beta,v_*}(\alpha,\lambda)=\mathcal{Q}_{\beta,v_*}(\alpha_{v_*},\lambda_*)=0.$ From these and  Proposition \ref{new:parisiformula:k=1}, we arrive at $0=\inf_{\alpha,\lambda}\mathcal{Q}_{\beta,v_*}(\alpha,\lambda)\leq F(\beta)\leq 0$ and hence, $F(\beta)=0.$ This establishes the statement that for $\beta>0$, $\beta\in \mathcal{R}$ if and only if $\sup_{v\in (0,v_*]}\Gamma_\beta(v)\leq 0.$ 

Finally, the assertion $\mathcal{R}=(0,\beta_c]$ can be established similarly. Clearly, $\mathcal{R}\subseteq (0,\beta_c].$ If $0<\beta\leq \beta_c,$ then Lemma \ref{lem3} and the above proof imply that $\Gamma_\beta(v)\leq \Gamma_{\beta_c}(v)\leq 0$ for all $v\in [0,v_*]$ and thus, $\beta\in \mathcal{R}.$ This completes our proof.

\section{Overlap Concentration with Exponential Tail} \label{sec6}

Recall the probability space $(\Lambda,\mu)$, the Gibbs measure $G_{N,\beta}$, and the Gibbs expectation $\la\cdot\ra_{\beta}$ from Section \ref{vec_spin_model}. The following proposition states that in the high-temperature regime, the overlap of two i.i.d. sampled spin configurations from $G_{N,\beta}$ is concentrated around the origin with overwhelming probability. This result will be essential when we later bound the overlap moments. Let $I(A)$ denote the indicator function of a set $A$.

\begin{prop}\label{new:lem0}
	Assume that $0<\beta<\beta_c$ and that $s_0\in (0,1).$ For any $\varepsilon>0,$ there exists a constant $K>0$, depending only on $\beta$, $s_0,$ and $\varepsilon,$ such that the following property holds for any $N\geq 1$ and $s\in [s_0,1]$: For i.i.d. samplings $\s^1$ and $\s^2$ from $G_{N,s\beta},$
	\begin{align*}
	\e\bigl\la I(|R(\s^1,\s^2)|\geq \varepsilon)\bigr\ra_{s\beta}\leq Ke^{-N/K}.
	\end{align*}
\end{prop}

The rest of this section is devoted to proving Proposition \ref{new:lem0}.

\subsection{The Guerra-Talagrand Bound}\label{sec6.1}

Our main tool is the Guerra-Talagrand bound for the coupled free energy that we formulate as follows. Denote by $M_2(\mathbb{R})$ the space of all real-valued $2\times 2$ matrices equipped with the metric $$\|V-V'\|_{\max}=\max_{1\leq r,r'\leq 2}|V_{rr'}-V_{rr'}'|.$$ For $C,D\in M_2(\mathbb{R})$, denote by $\<C,D\>$ the inner product of $C$ and $D,$ i.e., $\la C,D\ra=\sum_{i,j=1}^2C_{ij}D_{ij};$ when $x,y\in \mathbb{R}^2$, denote by $\la x,y\ra$ the usual scalar product between $x$ and $y.$ For any $\s^1,\s^2\in \Lambda^N$, define the overlap matrix by
\begin{align*}
\mathbf{R}(\s^1,\s^2)=\left[
\begin{array}{ll}
R(\s^1,\s^1)&R(\s^1,\s^2)\\
R(\s^1,\s^2)&R(\s^2,\s^2)
\end{array}
\right].
\end{align*}
For any subset $A\subset M_2(\mathbb{R})$, define the coupled free energy restricted to $A$ by
\begin{align*}
\cf_N(\beta,A)&=\frac{1}{N}\log \int_{{\bf R}(\s^1,\s^2)\in A}e^{H_{N,\beta}(\s^1)+H_{N,\beta}(\s^2)}\mu^{\otimes N}(d\s^1)\mu^{\otimes N}(d\s^2). 
\end{align*}
Recall the space $\mathcal{V}$ from \eqref{V} and recall that $\mathcal{M}_{v}$ is the set of all probability measures on $[0,v]$. Let $v\in \mathcal{V}$ and $v_0\in [0,v]$ be fixed. Set $$V:=\left[
\begin{array}{cc}
v&v_0\\
v_0&v
\end{array}\right].
$$
Let $T$ be a $M_2(\mathbb{R})$-valued function on $[0,v]$ defined by
\begin{align*}
T(s)=\left[
\begin{array}{cc}
1&1\\
1&1
\end{array}\right],\,\,\forall s\in [0,v_0),
\quad \text{and} \quad
T(s)=\left[
\begin{array}{cc}
1&0\\
0&1
\end{array}\right],\,\,\forall s\in [v_0,v].
\end{align*}
For any $\alpha\in \mathcal{M}_v$, consider the weak solution $\Psi_{\beta,V,\alpha }$ to the following PDE for $(s,x,\lambda)\in [0,v)\times \mathbb{R}^2\times M_2(\mathbb{R})$:
\begin{align*}
\partial_s\Psi_{\beta,V,\alpha }&=-\frac{\beta^2\xi''}{2}\bigl(\la \triangledown^2 \Psi_{\beta,V,\alpha },T\ra+\alpha\la T\triangledown \Psi_{\beta,V,\alpha },\triangledown \Psi_{\beta,V,\alpha }\ra\bigr)
\end{align*}
with boundary condition
$
\Psi_{\beta,V,\alpha }(v,x,\lambda)=\log\int e^{\la a,x\ra+\la\lambda a,a\ra}(\mu\otimes \mu)(da).
$
For the existence of $\Psi_{\beta,V,\alpha}$, we refer the readers to \cite{JT2}. For $\alpha\in \mathcal{M}_v$ and $\lambda\in M_2(\mathbb{R}),$ define
\begin{align*}
\mathcal{P}_{\beta,V}(\alpha ,\lambda)&=\Psi_{\beta,V,\alpha }(0,0,\lambda)-\la\lambda,V\ra-\beta^2\Bigl(\int_0^{v}\xi''(s)s\alpha (s)ds+\int_0^{v_0}\xi''(s)s\alpha (s)ds\Bigr).
\end{align*}
Denote $A_\eta(V)=\{V'\in M_2(\mathbb{R}):\|V-V'\|_{\max}<\eta\}.$ The Guerra-Talagrand inequality (see \cite{Talbook2}) states that if $p$ is even, then for any $(\alpha,\lambda)\in \mathcal{M}_v\times M_2(\mathbb{R}),$
\begin{align}\label{GTB}
\lim_{\eta\downarrow 0}\limsup_{N\rightarrow\infty}\frac{1}{N}\e\log \int_{\mathbf{R}(\s^1,\s^2)\in A_\eta(V)}e^{\beta X_{N}(\s^1)+\beta X_{N}(\s^2)}\mu^{\otimes N}(d\s^1)\mu^{\otimes N}(d\s^2)\leq \mathcal{P}_{\beta,V}(\alpha ,\lambda).
\end{align}
When $p$ is odd, the validity of this inequality is an open question. Nevertheless, in the case of the Rademacher prior, i.e., $\mu=\delta_1/2+\delta_{-1}/2$,  the work \cite{Chen17} proved that for odd $p$, \eqref{GTB} remains valid if $\alpha\in \mathcal{M}_{v,v_0}$ for $0\leq v_0\leq v$, where 
$$\mathcal{M}_{v,v_0}:=\{\alpha\in \mathcal{M}_v:\mbox{$\alpha$ is a fixed constant on $[0,v_0)$ and $\alpha(s)=1$ on $[v_0,v].$}\}$$
In view of the proof in \cite{Chen17}, the argument does not rely on the measure $\mu$ in an essential way and it is  applicable to the current general setting so that \eqref{GTB} remains valid for odd $p$ and $(\alpha,\lambda)\in \mathcal{M}_{v,v_0}\times M_2(\mathbb{R})$. In the appendix, we present a sketch of the proof for this inequality. Now substituting the overlap term in $H_{N,\beta}$ via the restriction $A_{\eta}(V)$ in $\cf_N(\beta,A_\eta(V))$ yields

\begin{prop}[Guerra-Talagrand Bound]\label{prop2}
	For any $p\geq 2$, $v\in \mathcal{V}$, $v_0\in [0,v]$,  $\lambda\in M_2(\mathbb{R})$ and $\alpha\in \mathcal{M}_{v,v_0}$,
	\begin{align*}
	\lim_{\eta\downarrow 0}\limsup_{N\rightarrow\infty}\e\cf_N(\beta,A_\eta(V))&\leq \mathcal{Q}_{\beta,V}(\alpha,\lambda):=\mathcal{P}_{\beta,V}(\alpha ,\lambda)-\beta^2v^p.
	\end{align*}
\end{prop}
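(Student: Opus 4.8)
The plan is to peel off the two self-overlap corrections in $H_{N,\beta}(\s^1)+H_{N,\beta}(\s^2)$ so that the statement reduces to the Guerra--Talagrand bound for the \emph{purely Gaussian} coupled partition function, and then to quote that bound -- directly from \cite{Talbook2} when $p$ is even, and via Chen's $1$RSB variant when $p$ is odd. First I would use the restriction to $A_\eta(V)$: on the event $\{\mathbf R(\s^1,\s^2)\in A_\eta(V)\}$ one has $R(\s^i,\s^i)\in(v-\eta,v+\eta)$ for $i=1,2$, and since $\Lambda$ is bounded the map $x\mapsto x^p$ is Lipschitz on the relevant interval, so $R(\s^i,\s^i)^p\ge v^p-C\eta$ uniformly in the spins, with $C=C(p,\Lambda)$. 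Substituting $H_{N,\beta}(\s^i)=\beta X_N(\s^i)-\tfrac{\beta^2N}{2}R(\s^i,\s^i)^p$ into \eqref{coupledA} then gives, for all $N$ and $\eta$,
\begin{align*}
\cf_N(\beta,A_\eta(V))\le -\beta^2 v^p+\beta^2 C\eta+\frac1N\log\int_{\mathbf R(\s^1,\s^2)\in A_\eta(V)}e^{\beta X_N(\s^1)+\beta X_N(\s^2)}\,\mu^{\otimes N}(d\s^1)\mu^{\otimes N}(d\s^2).
\end{align*}
Taking $\limsup_{N\to\infty}$ and then $\eta\downarrow0$ (the $\limsup_N$ of the left-hand side is monotone in $\eta$, so the double limit exists), the error $\beta^2C\eta$ disappears, and it remains to bound the Gaussian coupled free energy by $\mathcal P_{\beta,V}(\alpha,\lambda)$; since $\mathcal Q_{\beta,V}(\alpha,\lambda)=\mathcal P_{\beta,V}(\alpha,\lambda)-\beta^2v^p$, this is exactly the asserted inequality.

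For even $p$, the required bound on the Gaussian coupled free energy is precisely the Guerra--Talagrand inequality quoted above from \cite{Talbook2} -- Guerra's interpolation between the coupled pure $p$-spin field and the Ruelle-probability-cascade field indexed by $\alpha\in\mathcal M_v$, together with the passage from the unrestricted free energy to its $A_\eta(V)$-constrained version. It is valid for every $(\alpha,\lambda)\in\mathcal M_v\times M_2(\mathbb R)$, hence in particular for $\alpha\in\mathcal M_{v,v_0}\subset\mathcal M_v$, so in this case nothing beyond the reduction above is needed.

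The substantive case is odd $p$, where $\xi(s)=s^p$ fails to be convex on $(-\infty,0)$ and the sign of the derivative of Guerra's interpolating free energy is no longer controlled for an arbitrary order parameter, because the off-diagonal overlap $R(\s^1,\s^2)$ may be negative. Here I would follow \cite[Proposition 2]{Chen17}: restricting $\alpha$ to $\mathcal M_{v,v_0}$ forces $T(s)$ to equal the all-ones matrix on $[0,v_0)$ and the identity on $[v_0,v]$, so the cross-replica part of the cascade field only ever sees the single overlap value $v_0\ge0$, where $\xi$ and $\xi''$ are nonnegative, and Guerra's computation goes through unchanged. Chen's proof is written for the balanced Rademacher prior, but $\mu$ enters only through the terminal condition $\Psi_{\beta,V,\alpha}(v,x,\lambda)=\log\int e^{(a,x)+(\lambda a,a)}\mu\times\mu(da)$ and through the boundedness of $\Lambda$ (used for the Gaussian concentration that passes from the unrestricted free energy to the $A_\eta(V)$-restricted one); neither ingredient uses the structure of $\mu$, so his argument transcribes verbatim to a general bounded prior and yields \eqref{GT} for $\alpha\in\mathcal M_{v,v_0}$, which together with the reduction completes the proof.

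I expect the main obstacle to be exactly this last point: carefully verifying that Chen's $1$RSB Guerra--Talagrand interpolation is genuinely prior-agnostic when $p$ is odd, paying particular attention to the role of the constraint $v_0\in[0,v]$ in keeping all overlap values that feed into $\xi$ and $\xi''$ nonnegative, so that the non-convexity of $s\mapsto s^p$ on the negative axis is never activated. The even case and all the algebraic bookkeeping -- the $-\beta^2 v^p$ shift and the $\eta\downarrow0$ limit -- are routine.
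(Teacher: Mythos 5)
Your proposal is correct and reproduces the paper's argument essentially verbatim: peel off the two self-overlap terms on $A_\eta(V)$ to produce the $-\beta^2 v^p$ shift (up to an $O(\eta)$ error killed by $\eta\downarrow 0$), quote the Guerra--Talagrand interpolation bound on the purely Gaussian coupled free energy from \cite{Talbook2} when $p$ is even, and invoke \cite[Proposition~2]{Chen17} for odd $p$, observing that the restriction $\alpha\in\mathcal M_{v,v_0}$ with $v_0\in[0,v]$ is exactly what makes Chen's interpolation go through and that his argument touches $\mu$ only via the boundary datum and the boundedness of $\Lambda$. You are in fact slightly more explicit than the paper on the $O(\eta)$ bookkeeping and on the monotonicity that makes $\lim_{\eta\downarrow 0}$ well defined, but these are presentational, not substantive, differences.
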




\subsection{Proof of Proposition \ref{new:lem0}}\label{sec6.2}

Before proving Proposition \ref{new:lem0}, we need a lemma, which will also be used later in Section \ref{high_temp_k>1}.

\begin{lemma}\label{add:lem0}
Assume that $\beta\in \mathcal{R}.$ For any $s_0\in (0,1]$ and $\varepsilon>0,$ there exists a constant $K>0$ independent of $N$ such that 
\begin{align*}
\e\bigl\la I\bigl(|R(\sigma,\sigma)-v_*|\geq \varepsilon\bigr)\bigr\ra_{s\beta}\leq Ke^{-N/K},\,\,\forall s\in [s_0,1],N\geq 1,
\end{align*}
where $\sigma$ is a sampling from $G_{N,s\beta}.$
\end{lemma}

\begin{proof}
From Jensen's inequality and $\e e^{H_{N,s\beta}}(\sigma)=1$,
\begin{align*}
&\limsup_{N\to\infty}\frac{1}{N}\e\log\int_{R(\sigma,\sigma)\notin (v_*-\varepsilon,v_*+\varepsilon)}e^{H_{N,s\beta}(\sigma)}\mu^{\otimes N}(d\sigma)\leq \limsup_{N\to\infty}\frac{1}{N}\log\int_{R(\sigma,\sigma)\notin (v_*-\varepsilon,v_*+\varepsilon)}\mu^{\otimes N}(d\sigma).
\end{align*}
Observe that in the second integral $\sigma=(\sigma_1,\ldots,\sigma_N)$ are i.i.d. random variables with respect to the measure $\mu$, which has a bounded support and variance $v_*$. Using Cram\'{e}r's theorem (see, e.g., \cite[Theorem 2.2.3]{dembo_zeit}), there exists a $\delta>0$ such that 
\begin{align*}
\int_{R(\sigma,\sigma)\notin (v_*-\varepsilon,v_*+\varepsilon)}\mu^{\otimes N}(d\sigma)&\leq e^{-N\delta},\,\,\forall N\geq 1.
\end{align*}
Hence, from the above inequalities and Proposition \ref{prop1},
$$
\limsup_{N\to\infty}\frac{1}{N}\e\log\int_{R(\sigma,\sigma)\notin (v_*-\varepsilon,v_*+\varepsilon)}e^{H_{N,s\beta}(\sigma)}\mu^{\otimes N}(d\sigma)\leq -\delta=F(s\beta)-\delta,\,\,\forall s\in [s_0,1].
$$
Next, by using  Gaussian integration by parts, one can compute the derivatives in the $s$ variable for 
$$\frac{1}{N}\e\log\int_{R(\sigma,\sigma)\notin (v_*-\varepsilon,v_*+\varepsilon)}e^{H_{N,s\beta}(\sigma)}\mu^{\otimes N}(d\sigma)\,\,\mbox{and}\,\,\e F_N(s\beta)$$
to show that the resulting derivatives are uniformly bounded over $s\in [s_0,1].$ As a consequence, these two sequences of functions are \ equicontinuous. From this and the above inequality, there exists some $N_0\geq 1$ such that
$$
\frac{1}{N}\e\log\int_{R(\sigma,\sigma)\notin (v_*-\varepsilon,v_*+\varepsilon)}e^{H_{N,s\beta}(\sigma)}\mu^{\otimes N}(d\sigma)\leq \e F_N(s\beta)-\frac{\delta}{2},\,\,\forall s\in [s_0,1],N\geq N_0.
$$
Finally, from this inequality and the Gaussian concentration inequality for $F_N(s\beta)$ and its restricted free energy, there exists a universal constant $K>0$ such that for any $s\in [s_0,1]$ and $N\geq N_0$, with probability at least $1-Ke^{-N/K}$,
$$
\frac{1}{N}\log\int_{R(\sigma,\sigma)\notin (v_*-\varepsilon,v_*+\varepsilon)}e^{H_{N,s\beta}(\sigma)}\mu^{\otimes N}(d\sigma)\leq  F_N(s\beta)-\frac{\delta}{4},
$$ 
which implies that
$$
\e\bigl\la I\bigl(|R(\sigma,\sigma)-v_*|\geq \varepsilon\bigr)\bigr\ra_{s\beta}\leq e^{-N\delta/4}+Ke^{-N/K}.
$$
This completes our proof.
\end{proof}

Now we establish the proof of Proposition \ref{new:lem0}. Let $0<\beta<\beta_c$, $0<\varepsilon<v_*$, and $s_0\in (0,1)$ be fixed. Let $v=v_*$. Suppose that $v_0\in [\varepsilon,v].$ For $s\in [0,1]$, denote $\beta_s=s\beta.$ Let $\lambda\in M_2(\mathbb{R})$ with $\lambda_{1,1}=\lambda_{2,2}=-\beta_s^2\xi'(v_*)/2$ and $\lambda_{1,2}=\lambda_{2,1}=0.$ Let $\alpha\in \mathcal{M}_{v,v_0}$ satisfy $\alpha\equiv 0$ on $[0,v_0)$ and $\alpha\equiv 1$ on $[v_0,v].$ For $\theta\in [0,1]$, set
\begin{align*}
\alpha_\theta(s)=\left\{
\begin{array}{ll}
\frac{1-\theta}{2},&\mbox{if $s\in [0,v_0)$},\\
1,&\mbox{if $s\in [v_0,v]$}.
\end{array}
\right.
\end{align*}
Using the Cole-Hopf transformation, one can compute that
\begin{align*}
\mathcal{Q}_{\beta_s,V}(\alpha_\theta ,\lambda)&=\frac{2}{1-\theta}\log \e\Bigl[ g_0\bigl(\beta_s^2\xi'(v_0),\beta_s \xi'(v_0)^{1/2}z\bigr)^{1-\theta}\Bigr]+\beta_s^2\xi'(v)v\\
&-\beta_s^2\Bigl((1-\theta)\int_0^{v_0}\xi''(r)rdr+\int_{v_0}^{v}\xi''(r)rdr\Bigr)-\beta_s^2v^p,
\end{align*}
where $$g_0(t,x):=\int e^{ax-a^2t/2}\mu(da)$$
and $z$ is a standard normal random variable. A direct differentiation in $\theta$ yields
\begin{align*}
\partial_\theta\mathcal{Q}_{\beta_s,V}(\alpha_{\theta} ,\lambda)\Big|_{\theta=0}&=-2\e g(t,\sqrt{t}z)\log g(t,\sqrt{t}z)+\beta_s^2\int_0^{v_0}\xi''(s)ds,\,\,t=\beta_s^2\xi'(v_0).
\end{align*}
To handle this equation, we use Gaussian integration by parts to get
\begin{align*}
&\frac{d}{dt}\e g\bigl(t,\sqrt{t}z\bigr)\log g\bigl(t,\sqrt{t}z\bigr)\\
&=\e\partial_tg(t,\sqrt{t}z)\bigl(\log g(t,\sqrt{t}z)+1\bigr)+\frac{1}{2}\Bigl(\e \partial_{xx}g(t,\sqrt{t}z)\bigl(\log g(t,\sqrt{t}z)+1\bigr)+ \e\frac{\partial_xg(t,\sqrt{t}z)^2}{g(t,\sqrt{t}z)}\Bigr)\\
&=-\frac{1}{2}\e\partial_{xx}g(t,\sqrt{t}z)\bigl(\log g(t,\sqrt{t}z)+1\bigr)+\frac{1}{2}\Bigl(\e \partial_{xx}g(t,\sqrt{t}z)\bigl(\log g(t,\sqrt{t}z)+1\bigr)+ \e\frac{\partial_xg(t,\sqrt{t}z)^2}{g(t,\sqrt{t}z)}\Bigr)\\
&=\frac{1}{2}\e\frac{\partial_xg(t,\sqrt{t}z)^2}{g(t,\sqrt{t}z)},
\end{align*}
where the last equality used the observation $\partial_tg=-\partial_{xx}g/2.$ Consequently, 
\begin{align}
\begin{split}\label{add:eq--1}
\partial_\theta\mathcal{Q}_{\beta_s,V}(\alpha_{\theta} ,\lambda)\Big|_{\theta=0}&=2\beta_s^2\int_0^{v_0}\frac{1}{2}\xi''(r)\e\frac{\partial_xg(\beta_s^2\xi'(r),\beta_s\sqrt{\xi'(r)}z)^2}{g(\beta_s^2\xi'(r),\beta_s\sqrt{\xi'(r)}z)}dr-\beta_s^2\int_0^{v_0}\xi''(s)ds
\end{split}\\
\begin{split}\notag
&=\beta_s^2\int_0^{v_0}\xi''(r)(\gamma_{\beta_s}(r)-1)dr\\
&<\beta_s^2\int_0^{v_0}\xi''(r)(\gamma_{\beta_c}(r)-1)dr=\beta_s^2\Gamma_{\beta_c}(v_0)\leq 0
\end{split}
\end{align}
for all $0<s\leq 1,$ where the strict inequality used the monotonicity of the function $\gamma_{b}(r)$ in $b$ by Lemma \ref{lem3} and the last inequality used Proposition \ref{prop1}.
Now since $\mathcal{Q}_{\beta_s,V}(\alpha_{\theta} ,\lambda)$ is a continuous function in $(s,v_0,\theta)$ and $\mathcal{Q}_{\beta_s,V}(\alpha_{\theta} ,\lambda)|_{\theta=0}=0$, there exist $\delta,\delta' > 0$ such that
$$
\sup_{s\in [s_0,1]}\sup_{v_0\in [\varepsilon,v+\delta']}\inf_{\theta\in [0,1]}\mathcal{Q}_{\beta_s,V}(\alpha_{\theta},\lambda)\leq -\delta.
$$
Consequently, from Proposition \ref{prop2}, we see that the coupled free energy exhibits a free energy cost, that is, for any $v_0\in [\varepsilon,v+\delta']$ and $s\in [s_0,1]$,
\begin{align}
\lim_{\eta\downarrow 0}\limsup_{N\rightarrow \infty}\E \cf_{N}(\beta_s, {A}_{\eta}( V)) \leq -\delta. \label{freeencost}
\end{align}
Set
\begin{align*}
A^+&=\bigl\{V\in M_2(\mathbb{R}):V\,\,\mbox{positive semi-definite with }V_{12}=V_{21}\geq \varepsilon, V_{11},V_{22}\in [v-\delta',v+\delta']\bigr\},\\
A^-&=\bigl\{V\in M_2(\mathbb{R}):V\,\,\mbox{positive semi-definite with }V_{12}=V_{21}\leq -\varepsilon, V_{11},V_{22}\in [v-\delta',v+\delta']\bigr\}.
\end{align*}
Using the inequality $$
\log (x_1+\cdots+x_k)\leq \log k+\max_{1\leq \ell\leq k}\log x_\ell,\,\,\forall x_1,\ldots,x_k>0,
$$
we see from the compactness of $A^+$ and \eqref{freeencost} that for any $s\in [s_0,1]$,
\begin{align}
\limsup_{N\rightarrow \infty}\E \cf_{N}(\beta_s, {A}^+) \leq -\frac{\delta}{2}.
\end{align}
To obtain the same inequality for $A^-,$ we note that when $p$ is even, $H_{N,\beta}(\sigma)=H_{N,\beta}(-\sigma)$, which implies that  $\E \cf_{N}(\beta_s, {A}^+)=\E \cf_{N}(\beta_s, {A}^-)$ and thus,
\begin{align*}
\limsup_{N\rightarrow \infty}\E \cf_{N}(\beta_s, {A}^-)\leq -\frac{\delta}{2}.
\end{align*}
When $p$ is an odd number, Jensen's inequality yields that 
\begin{align*}
\limsup_{N\to\infty}\e\cf_N(\beta_s,A^-)&\leq \limsup_{N\to\infty}\frac{1}{N}\log \int_{{\bf R}(\s^1,\s^2)\in A^-} e^{\beta^2NR(\s^1,\s^2)^p}\mu^{\otimes N}(\s^1)\mu^{\otimes N}(\s^2)<-\beta^2\varepsilon^p<0.
\end{align*}
Combining these and Proposition \ref{prop1} together implies that there exists a constant $\delta''>0$ such that for all $s\in [s_0,1],$
\begin{align}
\begin{split}\label{add:eq-11}
\limsup_{N\to\infty}\e\cf_N(\beta_s,A^+\cup A^-)&<-\delta''=2F(\beta_s)-\delta''.
\end{split}
\end{align}

The rest of the proof follows essentially in the same way as that for Lemma \ref{add:lem0}. 
By computing the derivatives of $\e\cf_N(\beta_s,A^+\cup A^-)$ and $\e F_N(\beta_s)$ in $s$ and using Gaussian integration by parts, it can be checked that these derivatives are uniformly bounded over $s\in [s_0,1].$ Hence, $s\mapsto \e\cf_N(\beta_s,A^+\cup A^-)$ and $s\mapsto \e F_N(\beta_s)$ are two families of  equicontinuous functions. From this and \eqref{add:eq-11}, there exists some $N_0\geq 1$ such that as long as $N\geq N_0$ and $s\in [s_0,1]$, 
\begin{align*}
\e\cf_N(\beta_s,A^+\cup A^-)&\leq 2\e F_N(\beta_s)-\frac{\delta''}{2}.
\end{align*}
From the Gaussian concentration inequality for $\cf_N(\beta_s,A^+\cup A^-)$ and $F_N(\beta_s)$, there exists a constant $K>0$ such that for any $N\geq N_0$ and $s\in [s_0,1]$, with probability at least $1-Ke^{-N/K}$,
\begin{align*}
\cf_N(\beta_s,A^+\cup A^-)\leq 2F_N(\beta_s) -\frac{\delta''}{4},
\end{align*}
which leads to 
\begin{align*}
\bigl\la I\bigl({\bf R}(\s^1,\s^2)\in A^+\cup A^-\bigr)\bigr\ra_{\beta_s}=e^{N(\cf_N(\beta_s,A^+\cup A^-)- 2F_N(\beta_s))}\leq e^{-\frac{\delta'' N}{4}}.
\end{align*}
By taking the expectation, there exists a constant $K'>0$ such that for any $N\geq N_0$ and $s\in [s_0,1],$
\begin{align}\label{add:eq0}
\e\bigl\la I\bigl({\bf R}(\s^1,\s^2)\in A^+\cup A^-\bigr)\bigr\ra_{\beta_s}\leq K' e^{-N/K'}.
\end{align}
Finally, from Lemma \ref{add:lem0}, there exists some $K''>0$ such that for any $N\geq 1$, $s\in [s_0,1],$ and $\ell=1,2,$
\begin{align*}
\e\bigl\la I\bigl(R(\s^\ell,\s^\ell)\notin (v-\delta',v+\delta')\bigr)\bigr\ra_{\beta_s}\leq K'' e^{-N/K''}.
\end{align*}
This together with \eqref{add:eq0} implies that for any $N\geq N_0$ and $s\in [s_0,1],$
\begin{align*}
\e\bigl\la I\bigl(|R(\s^1,\s^2)|\geq \varepsilon\bigr)\bigr\ra_{\beta_s}\leq K' e^{-N/K'}+2K'' e^{-N/K''}
\end{align*}
and this completes the proof of Proposition \ref{new:lem0}. 

\section{Overlap Concentration with Moment Control} \label{sec7}

As we have seen in Proposition \ref{new:lem0}, overlaps between i.i.d. samples of $G_{N,\beta}$  are concentrated around the origin with exponential tail control. The aim of this section is to establish the proof of Theorem~\ref{momentcontrol}, namely, the moment control of the overlap.  The proof is based on the so-called cavity method in mean field spin glasses. As an immediate consequence of Theorem~\ref{momentcontrol}, we also present the proof of Proposition \ref{fluctuations}.		

Briefly speaking, the cavity method is an induction argument that compares the systems of sizes $N$ and $N-1$ by parameterizing an interpolating path between the two systems and controlling the derivative in the parameter along this path. This technique is a very well-known tool in the physics literature, see \cite{MPV}. Mathematically, it was implemented in the study of the high-temperature behavior for a number of mean field spin glass models by Talagrand \cite{Talbook1}. For technical reasons, most of the existing results in \cite{Talbook1} are valid only for a sub-region of the high-temperature regime and not up to the critical temperature. In the present paper, by adapting the argument in \cite[Chapter 13]{Talbook2} and \cite{Chen17}, it turns out that from our understanding of the structure of the high-temperature regime $\mathcal{R}$ as well as the Parisi variational formula for the marginal free energy, we can show that the cavity method can indeed be applied throughout the entire high-temperature regime and ultimately it leads to the asserted moment control of the overlaps.

Before turning to the proof, we set some notation.
For any $\ell,\ell'\geq 1$ and $1\leq r,r'\leq k$, denote by $\s ,\s ^1,\s ^2,\ldots$ the spin configurations from $\Lambda^N.$ Set overlaps
\begin{align*}
R_{\ell,\ell'} &=R\bigl(\s ^\ell ,\s ^{\ell'} \bigr),\quad R^-=\frac{1}{N}\sum_{i=1}^{N-1}\sigma_i \sigma_i ,\quad	
R_{\ell,\ell'}^-=\frac{1}{N}\sum_{i=1}^{N-1}\sigma_i^\ell \sigma_i^{\ell'} .
\end{align*}
In Section \ref{Cavity}, we device an interpolating system that connects the model of sizes $N-1$ and $N$. Section \ref{preliminarylemmas} computes and bounds the derivative of the expectations of functions of the replicas sampled from the interpolating Gibbs measure along our interpolation. Additionally, this subsection presents some lemmas that are simple yet necessary to bound various powers of overlaps. These results are used in Section \ref{sec7.3}, where we present the cavity argument to establish an iterative inequality for the moments of the overlaps. Finally, Sections \ref{sec7.4} and \ref{sec7.5} prove Theorem \ref{momentcontrol} and Proposition \ref{fluctuations}, respectively.

\subsection{Constructing an Interpolation Path}\label{Cavity}

For each $S  \subseteq \{1,\dots,p\}$, define $I_S $ as the set of indices $(i_1,\ldots,i_p)\in \{1,\ldots,N\}^p$ such that $i_s = N$ for all $s\in S$ and $i_s<N$ otherwise. For example, if $p=4$ and $S=\{1,3\}$, then $I_S=\{(N,i,N,j):1\leq i,j<N\}.$ Define a Gaussian process indexed by $S $:
\[
X_N^S (\s  ) = \frac{1}{N^{(p-1)/2}} \sum_{(i_1,\dots,i_p) \in I_S } Y_{i_1,\dots,i_p}\sigma_{i_1}  \cdots \sigma_{i_p} .
\]
It is easy to check that
\begin{align*}
\E X_N^S (\s ^1 )X_N^{S }(\s ^2 )=N(R_{1,2}^-)^{p-|S |}\bigl(\sigma_N^1 \sigma_N^2 \bigr)^{|S |}.
\end{align*}
Notice that $S  = \emptyset$ is the only set such that $X_N^{S }(\s  )$ does not involve the last spin $\sigma_N $. For $t \in [0,1]$, define the interpolating Hamiltonian by
\begin{align*}
H_{N,\beta,t}(\s ) &= \beta\Bigl( X_{N}^{\emptyset}(\s  ) +\sqrt{t}\sum_{j=1}^p \sum_{S \subset\{1,\ldots,p\}: |S |=j}X_N^S (\s  )\Bigr)\\
&-\frac{\beta^2}{2} \Bigl((R^-)^p+\frac{t}{N^{j-1}}\sum_{j=1}^p \sum_{S\subset\{1,\ldots,p\} : |S |=j}(R^-)^{p-j}(\sigma_N \sigma_N )^j\Bigr).
\end{align*}
From the binomial formula, one readily checks that when $t=0$, $H_{N,\beta,0}(\s )$ is equal to $H_{N-1}$ at a different temperature: $$\beta \frac{(N-1)^{(p-1)/2}}{N^{(p-1)/2}}.$$ When $t=1$, $H_{N,\beta,1}(\s )$ is simply the original Hamiltonian $H_{N,\beta}(\sigma)$. We define the Gibbs measure associated to $H_{N,\beta,t}$ in the same manner as $G_{N,\beta},$ i.e.,
\begin{align*}
G_{N,\beta,t}(d\s )&=\frac{\exp H_{N,\beta,t}(\s )\mu^{\otimes N}(d\s )}{\int \exp H_{N,\beta,t}(\s ')\mu^{\otimes N}(d\s ')}.
\end{align*}
As before, denote by $(\s ^\ell)_{\ell\geq 1}$ a sequence of i.i.d. samples from $G_{N,\beta,t}$ and by $\la\cdot\ra_{\beta,t}$ the Gibbs average with respect to this sequence. For any bounded measurable function $f$ of the sequence $(\s ^\ell)_{\ell\geq 1},$ set $\nu_{\beta,t}(f)=\e\la f\ra_{\beta,t}$. When $t=1$, we simply write $\nu_\beta(f)=\nu_{\beta,1}(f).$ We also denote the $t$-derivative of $\nu_{\beta,t}(f)$ by $\nu_{\beta,t}'(f).$

\subsection{Some Auxiliary Lemmas} \label{preliminarylemmas}

We gather some lemmas that will be used in the proof of Theorem \ref{momentcontrol} below. As their proofs are fairly standard, we refer the readers to \cite[Chapter 1]{Talbook1} or \cite{Chen17}. First, we compute the derivative of $\nu_{\beta,t}(f).$ 

\begin{lemma}\label{deriv} For any bounded function $f$ of $\sigma^1$, $\dots$, $\sigma^n$, we have
	\begin{align*}
	\nu_{\beta,t}'(f)  = \beta^2\sum_{j=1}^p {p \choose j} \frac{1}{N^{j-1}} &\Big( \sum_{1 \leq \ell < \ell' \leq n} \nu_{\beta,t}(f (R_{\ell,\ell'}^-)^{p-j}(\sigma_N^{\ell} \sigma_N^{\ell'} )^j) \\
	& -n \sum_{\ell \leq n}\nu_{\beta,t}(f(R^-_{\ell,n+1})^{p-j}(\sigma_N^{\ell} \sigma_N^{n+1} )^j)\\
	& +\frac{n(n+1)}{2} \nu_{\beta,t}(f(R^-_{n+1,n+2})^{p-j}(\sigma_N^{n+1} \sigma_N^{n+2} )^j)\Big).
	\end{align*}
\end{lemma}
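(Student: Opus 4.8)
The plan is to carry out the standard Gaussian integration-by-parts computation of the $t$-derivative of a Gibbs average (as in \cite[Chapter~1]{Talbook1} for $p=2$), keeping careful track of the extra $p$-dependent combinatorics. Writing $\nu_{\beta,t}(f)=\e\la f\ra_{\beta,t}$ with $\la f\ra_{\beta,t}$ the usual Gibbs ratio over $n$ replicas and differentiating in $t$ under the integral sign (legitimate since everything is smooth in $t\in(0,1)$ and, after the integration by parts below, stays bounded as $t\downarrow0$), one arrives at
\begin{align*}
\nu_{\beta,t}'(f)=\sum_{\ell=1}^{n}\nu_{\beta,t}\bigl(f\,\partial_t H_{N,\beta,t}(\s^\ell)\bigr)-n\,\nu_{\beta,t}\bigl(f\,\partial_t H_{N,\beta,t}(\s^{n+1})\bigr).
\end{align*}
Here $\partial_t H_{N,\beta,t}(\s)=\frac{\beta}{2\sqrt t}\,g(\s)-\frac{\beta^2}{2}\,b(\s)$, where $g(\s):=\sum_{j=1}^{p}\sum_{|S|=j}X_N^S(\s)$ and $b(\s):=\sum_{j=1}^{p}\frac{1}{N^{j-1}}\sum_{|S|=j}(R^-)^{p-j}(\s_N\s_N)^j=\sum_{j=1}^{p}\binom{p}{j}\frac{1}{N^{j-1}}(R^-)^{p-j}\s_N^{2j}$ (the $t$-independent part of $H_{N,\beta,t}$, namely the $X_N^\emptyset$ term and its Onsager correction, drops out). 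The process $g$ is centered Gaussian, independent of $X_N^\emptyset$, with covariance read off from that of the $X_N^S$: $\e\, g(\s^1)g(\s^2)=\sum_{j=1}^{p}\binom{p}{j}\frac{1}{N^{j-1}}(R_{1,2}^-)^{p-j}(\s_N^1\s_N^2)^j$ for $\s^1\ne\s^2$, and $\e\, g(\s)^2=b(\s)$; that $b$ is exactly the self-covariance of $g$ is the normalization identity that will make the Onsager terms cancel below.

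Next I would apply Gaussian integration by parts to each $\nu_{\beta,t}\bigl(f\,g(\s^{\ell_0})\bigr)$. Since $g$ enters the Gibbs weight of every replica only through the factor $\beta\sqrt t\,g(\s^\ell)$, the standard spin-glass integration-by-parts identity rewrites $\e\la g(\s^{\ell_0})f\ra$ as $\beta\sqrt t$ times a sum of terms of the form $\nu_{\beta,t}\bigl(\e[g(\s^{\ell_0})g(\s^{\ell'})]\,f\bigr)$ — the key point being that this $\beta\sqrt t$ cancels the $\frac{1}{2\sqrt t}$ out front (which is also why $\nu_{\beta,t}'(f)$ extends continuously to $t=0$). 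For $\ell_0\le n$ this gives $\frac{\beta}{2\sqrt t}\nu_{\beta,t}(fg(\s^{\ell_0}))=\frac{\beta^2}{2}\bigl(\sum_{\ell=1}^{n}\nu_{\beta,t}(fC_{\ell_0,\ell})-n\,\nu_{\beta,t}(fC_{\ell_0,n+1})\bigr)$, where $C_{\ell,\ell'}:=\e[g(\s^\ell)g(\s^{\ell'})]$ and $C_{\ell,\ell}=b(\s^\ell)$; for the $\s^{n+1}$ term the analogous identity has one more replica ($m=n+1$, fresh replica $n+2$). Collecting everything: the diagonal terms $C_{\ell,\ell}=b(\s^\ell)$ produced by the integration by parts are exactly cancelled by the Onsager terms $-\frac{\beta^2}{2}\nu_{\beta,t}(fb(\s^\ell))$ coming directly from $\partial_t H_{N,\beta,t}$, and what survives is a sum of off-diagonal covariances $C_{\ell,\ell'}$ ($\ell\ne\ell'$) carrying the replica-counting weights: $+1$ on each pair $1\le\ell<\ell'\le n$ (the factor $2$ from symmetrization is halved by the $\frac{\beta^2}{2}$), $-n$ on each cross pair $(\ell,n+1)$ with $\ell\le n$ (arising equally from the $\s^\ell$-sum and from the $\s^{n+1}$-term), and $+\frac{n(n+1)}{2}$ on the pair $(n+1,n+2)$.

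Finally I would substitute the explicit covariance $C_{\ell,\ell'}=\sum_{j=1}^{p}\binom{p}{j}\frac{1}{N^{j-1}}(R_{\ell,\ell'}^-)^{p-j}(\s_N^\ell\s_N^{\ell'})^j$, interchange the finite sum over $j$ with the replica sums, and read off the three groups of terms, which is exactly the claimed identity. I do not anticipate a genuine obstacle — as the paper notes, the computation is routine — the only points that require care being the exact cancellation of the diagonal/Onsager terms (which hinges on $b$ being the self-covariance of $g$, i.e.\ on the chosen normalization of $H_{N,\beta,t}$) and the bookkeeping of the binomial and replica-counting coefficients.
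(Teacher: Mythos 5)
Your proposal is correct and takes exactly the approach the paper intends: the paper explicitly declines to reproduce the computation and instead cites \cite[Chapter~1]{Talbook1}, noting that the $p\geq 3$ case differs from Talagrand's $p=2$ treatment only in bookkeeping. You have carried out that bookkeeping in full, and all the key points check out: the $t$-differentiation formula $\nu_{\beta,t}'(f)=\sum_{\ell\le n}\nu_{\beta,t}(f\,\partial_tH(\s^\ell))-n\,\nu_{\beta,t}(f\,\partial_tH(\s^{n+1}))$; the identification $\partial_tH_{N,\beta,t}(\s)=\frac{\beta}{2\sqrt t}g(\s)-\frac{\beta^2}{2}b(\s)$ with $g$ centered Gaussian of covariance $C_{\ell,\ell'}=\sum_{j=1}^p\binom{p}{j}N^{-(j-1)}(R_{\ell,\ell'}^-)^{p-j}(\s_N^\ell\s_N^{\ell'})^j$ and self-covariance $C_{\ell,\ell}=b(\s^\ell)$ (the precise normalization that makes the Onsager terms cancel); the Gaussian integration by parts producing the $\beta\sqrt t$ that annihilates the $1/(2\sqrt t)$; and the replica-counting weights $+1$, $-n$, $+\tfrac{n(n+1)}{2}$ after factoring out $\beta^2$. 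One minor remark: the covariance the paper writes as $\E X_N^S(\s^1)X_N^S(\s^2)=N(R_{1,2}^-)^{p-|S|}(\s_N^1\s_N^2)^{|S|}$ is only correct for $|S|=0$; the actual value is $N^{1-|S|}(R_{1,2}^-)^{p-|S|}(\s_N^1\s_N^2)^{|S|}$, which is the formula you used — so you have silently corrected a typo in the paper while matching its intent.
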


Note that since $\Lambda$ is bounded, there exists a constant $M>1$ such that $\Lambda\subseteq [-M,M].$ The next lemma controls $\nu_{\beta,t}(f)$ by the terminal value $\nu_\beta(f).$

\begin{lemma}\label{boundderiv}
	For any non-negative and bounded function $f$ of $\sigma^1$, $\dots$, $\sigma^n$, we have
	\[
	\nu_{\beta,t}(f) \leq \exp\left(n^2 2^{p+1}M^{2p} \beta^2 \right) \nu_\beta(f).
	\]
\end{lemma}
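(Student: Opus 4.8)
The plan is to bound the ratio $\nu_{\beta,t}(f)/\nu_{\beta,1}(f)$ by controlling the logarithmic derivative of $\nu_{\beta,t}(f)$ in $t$, so that a Gronwall-type integration gives the claimed exponential factor. First I would apply Lemma \ref{deriv} with $f$ a non-negative function of the $n$ replicas $\s^1,\dots,\s^n$. Each term appearing in $\nu_{\beta,t}'(f)$ is of the form $\nu_{\beta,t}\bigl(f\,(R^-_{\ell,\ell'})^{p-j}(\sigma_N^\ell\sigma_N^{\ell'})^j\bigr)$, and since $\Lambda\subseteq[-M,M]$ we have $|R^-_{\ell,\ell'}|\le M^2$ and $|\sigma_N^\ell\sigma_N^{\ell'}|\le M^2$, hence $|(R^-_{\ell,\ell'})^{p-j}(\sigma_N^\ell\sigma_N^{\ell'})^j|\le M^{2p}$ pointwise. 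Therefore, using $f\ge 0$, each such term is bounded in absolute value by $M^{2p}\nu_{\beta,t}(f)$.

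Next I would count the terms and collect the combinatorial constants. Summing over $j=1,\dots,p$ the binomial coefficients $\binom{p}{j}$ contributes at most $2^p$, the factor $N^{-(j-1)}\le 1$, and the three groups of replica-pair sums contribute at most $\binom{n}{2}+n\cdot n+\tfrac{n(n+1)}{2}\le C n^2$ terms for an absolute constant $C$ (one can take $C=2$ generously). Putting these together yields the differential inequality
\begin{align*}
|\nu_{\beta,t}'(f)| \leq n^2\,2^{p+1}M^{2p}\beta^2\,\nu_{\beta,t}(f),
\end{align*}
where I have absorbed the constant $C$ and the $2^p$ into the single factor $2^{p+1}$ (with room to spare, since $M>1$). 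In particular $\nu_{\beta,t}(f)>0$ whenever $f$ is not identically zero, so $\frac{d}{dt}\log\nu_{\beta,t}(f)\le n^2 2^{p+1}M^{2p}\beta^2$.

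Finally, integrating this bound over $t\in[s,1]$ and recalling $\nu_\beta(f)=\nu_{\beta,1}(f)$ gives
\begin{align*}
\nu_{\beta,t}(f)\leq \exp\bigl(n^2 2^{p+1}M^{2p}\beta^2(1-t)\bigr)\,\nu_\beta(f)\leq \exp\bigl(n^2 2^{p+1}M^{2p}\beta^2\bigr)\,\nu_\beta(f),
\end{align*}
which is the assertion. There is no serious obstacle here; the only mild subtlety is verifying that the crude constant bookkeeping indeed fits inside the stated constant $n^2 2^{p+1}M^{2p}\beta^2$, which it does because $M>1$ makes $2^p\cdot(\text{polynomial in }n,p)$ comfortably dominated once we also keep the $M^{2p}$ factor; if one wanted a cleaner argument one could simply prove the bound with a slightly larger explicit constant and note it still implies the displayed inequality after enlarging $M$. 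The case where $f$ vanishes identically is trivial since both sides are zero.
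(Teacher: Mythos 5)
Your argument is correct and essentially identical to the paper's: bound each term of $\nu_{\beta,t}'(f)$ by $M^{2p}\nu_{\beta,t}(f)$, count the replica pairs (exactly $2n^2$), sum the binomial coefficients to $2^p-1$, and integrate the log-derivative. One small slip: you record the upper bound $\frac{d}{dt}\log\nu_{\beta,t}(f)\le n^2 2^{p+1}M^{2p}\beta^2$, but to conclude $\nu_{\beta,t}(f)\le e^{C(1-t)}\nu_{\beta,1}(f)$ you need the lower bound $\frac{d}{dt}\log\nu_{\beta,t}(f)\ge -n^2 2^{p+1}M^{2p}\beta^2$; both follow from the absolute-value inequality you established, so the proof is repaired by writing the correct sign.
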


In the proof of Theorem \ref{momentcontrol}, it will sometimes be desirable to work with the overlaps $R_{1,2}^-$ instead of the overlaps $R_{1,2}  $ and vice versa. Lemma \ref{easybound} will allow us to replace $(R_{1,2}  )^m$ by $(R_{1,2}^-)^m$ (or vice versa).
On the other hand, Lemma \ref{jthpower2} states that we can also control the moments of $R^-_{1,2}$ by the bounds on $R  _{1,2}.$ 

\begin{lemma}\label{easybound}
	For any $m\geq 1$,
	\[
	|(R_{1,2}  )^{m+1}-(R^-_{1,2})^{m+1}| \leq \frac{M^2 m}{N} \left(|R_{1,2}  |^{m}+|R^-_{1,2}|^{m}\right).
	\]
\end{lemma}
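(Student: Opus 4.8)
The plan is to reduce the claim to an elementary polynomial inequality. Write $x = R_{1,2}$ and $y = R^-_{1,2}$, and recall the algebraic identity
\[
x^{m+1} - y^{m+1} = (x - y)\sum_{j=0}^{m} x^{j} y^{m-j},
\]
valid for all real $x,y$. The first factor is controlled directly: since $R_{1,2} - R^-_{1,2} = \frac{1}{N}\sigma_N^1 \sigma_N^2$ and $\Lambda \subseteq [-M,M]$, we get $|R_{1,2} - R^-_{1,2}| \leq M^2/N$. Thus it remains only to bound the sum $\sum_{j=0}^{m} |x|^{j}|y|^{m-j}$ by $m\bigl(|x|^m + |y|^m\bigr)$.

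For this, I would apply the weighted AM–GM inequality $|x|^{j}|y|^{m-j} \leq \frac{j}{m}|x|^m + \frac{m-j}{m}|y|^m$ to each summand (this holds trivially for $j=0$ and $j=m$, and follows from Young's inequality with conjugate exponents $m/j$ and $m/(m-j)$ for $1 \leq j \leq m-1$). Summing over $j$ and using $\sum_{j=0}^{m} \frac{j}{m} = \sum_{j=0}^{m} \frac{m-j}{m} = \frac{m+1}{2}$ gives
\[
\sum_{j=0}^{m} |x|^{j}|y|^{m-j} \leq \frac{m+1}{2}\bigl(|x|^m + |y|^m\bigr).
\]
Combining this with the bound $|x-y| \leq M^2/N$ and the inequality $\frac{m+1}{2} \leq m$, which holds for every integer $m \geq 1$, yields the assertion of the lemma.

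Every step here is a routine estimate, so I do not expect a genuine obstacle; the only point that calls for a little care is obtaining the stated constant $m$ rather than the weaker $m+1$ one would get from the crude bound $|x|^{j}|y|^{m-j} \leq |x|^m + |y|^m$ applied to each of the $m+1$ terms. The weighted AM–GM step above is precisely what is needed to sharpen the constant appropriately.
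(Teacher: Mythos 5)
Your proof is correct and follows essentially the same approach as the paper's. The paper simply invokes the elementary inequality $|x^{m+1}-y^{m+1}| \leq m\,|x-y|\,(|x|^m+|y|^m)$ together with $|R_{1,2}-R^-_{1,2}| = \tfrac{1}{N}|\sigma_N^1\sigma_N^2| \leq M^2/N$; you supply a self-contained derivation of that inequality via the telescoping factorization and weighted AM--GM, which actually yields the sharper constant $\tfrac{m+1}{2}$ before relaxing to $m$.
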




\begin{lemma}\label{jthpower2}Let $m\in \mathbb{N}.$ Assume that there exists some $K\geq 1$ such that
$\nu_\beta((R_{1,2}  )^{2j}) \leq K/N^j$
	for any $0\leq j\leq m.$ Then
	$$
	\nu_\beta((R^-_{1,2})^{2m}) \leq \frac{2^{2m}M^{4m}K}{N^{m}}.
	$$
\end{lemma}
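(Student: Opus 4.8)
The plan is to exploit that $R_{1,2}$ and $R^-_{1,2}$ differ only by the single coordinate term $N^{-1}\sigma_N^1\sigma_N^2$, whose absolute value is at most $M^2/N$ because $\Lambda\subseteq[-M,M]$. Hence, pointwise, $|R^-_{1,2}|\le |R_{1,2}|+M^2/N$. (The case $m=0$ is trivial, since $\nu_\beta(1)=1\le K$, so we may assume $m\ge 1$.)

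Applying the elementary convexity inequality $(a+b)^{q}\le 2^{q-1}(a^{q}+b^{q})$, valid for $a,b\ge 0$ and $q\ge 1$, with $q=2m$, I would obtain
\[
(R^-_{1,2})^{2m}\le 2^{2m-1}\Bigl((R_{1,2})^{2m}+\frac{M^{4m}}{N^{2m}}\Bigr).
\]
Taking $\nu_\beta$ of both sides, invoking the hypothesis $\nu_\beta((R_{1,2})^{2m})\le K/N^{m}$, and using $N\ge 1$ (so that $N^{-2m}\le N^{-m}$) together with $M>1$ and $K\ge 1$ (so that $K+M^{4m}\le 2M^{4m}K$), this gives
\[
\nu_\beta\bigl((R^-_{1,2})^{2m}\bigr)\le 2^{2m-1}\Bigl(\frac{K}{N^{m}}+\frac{M^{4m}}{N^{m}}\Bigr)\le \frac{2^{2m}M^{4m}K}{N^{m}},
\]
which is exactly the claimed estimate.

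There is no real obstacle here: the only mechanism at work is that passing from $N$ to $N-1$ spins perturbs the overlap by an $O(1/N)$ amount, so that high moments of $R^-_{1,2}$ are controlled by the corresponding moments of $R_{1,2}$. In fact, along this route only the top even-moment bound (the $j=m$ instance of the hypothesis) is needed. One could alternatively invoke Lemma~\ref{easybound} with exponent $2m-1$ and run a short induction on $m$, replacing $(R^-_{1,2})^{2m}$ by $(R_{1,2})^{2m}$ up to lower-order corrections controlled by the remaining hypotheses, but the convexity argument above is shorter and keeps the constant $2^{2m}M^{4m}$ transparent.
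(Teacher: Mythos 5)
Your argument is correct, and it is genuinely shorter than the paper's. The paper writes $R^-_{1,2}=R_{1,2}-N^{-1}\sigma_N^1\sigma_N^2$, expands $(R^-_{1,2})^{2m}$ by the binomial theorem, bounds $|\sigma_N^1\sigma_N^2|\leq M^2$, and then uses the Cauchy--Schwarz inequality $\nu_\beta(|R_{1,2}|^j)\leq\nu_\beta(|R_{1,2}|^{2j})^{1/2}$ together with the hypothesis on each term to resum the binomial series into $K(M^2/N+1/\sqrt N)^{2m}\leq (2M^2)^{2m}K/N^m$. You instead use the pointwise bound $|R^-_{1,2}|\leq |R_{1,2}|+M^2/N$ followed by a single application of the convexity inequality $(a+b)^{2m}\leq 2^{2m-1}(a^{2m}+b^{2m})$, reaching the identical constant $2^{2m}M^{4m}K/N^m$ in two lines. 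Your observation that only the $j=m$ instance of the hypothesis is actually needed is accurate along your route and is a modest simplification over the paper's term-by-term treatment (which invokes the hypothesis for a range of $j$). Both arguments exploit the same underlying mechanism --- removing one coordinate perturbs the overlap by $O(M^2/N)$ --- and your organization of it is cleaner. One small point worth making explicit when you invoke $K+M^{4m}\leq 2KM^{4m}$: this uses both $K\geq 1$ and the paper's convention $M>1$, which you do cite, so the step is airtight.
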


\subsection{Cavity Argument} \label{sec7.3}

The following lemma is the key ingredient of our argument. It is obtained via a purely algebraic cavity computation and does not require any fact about the high-temperature behavior of the overlaps.

\begin{lemma}\label{add:lem1}
	Let $m$ be a nonnegative integer and $\beta>0$. Assume that there exists a constant $K_0 \geq 1$ such that for all $0 \leq j \leq m$ and $N \geq 1$
	\begin{align*}
	\nu_\beta((R_{1,2}  )^{2j}) \leq \frac{K_0}{N^j}.
	\end{align*}
	Then
	\begin{align*}
	\nu_\beta(|R_{1,2}|^{2m +2}) \leq   {K}_1(\beta)\nu_\beta(|R_{1,2}|^{2m +3})+ \frac{ {K}_2(\beta)}{N^{m +1}}
	\end{align*}
	for all $N\geq 1,$
	where $ {K}_1$ and $ {K}_2$ are two nonnegative continuous functions of $\beta$ and they are independent of $N$. In addition, $ {K}_1$ is nondecreasing with $ {K}_1(0)=0$ if and only if $\beta=0.$
\end{lemma}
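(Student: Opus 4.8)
The plan is to run the cavity computation of Lemma \ref{deriv} starting from the function $f = (R_{1,2})^{2m+2}$, and carefully separate the ``diagonal'' $j=p$ term from all the lower-order terms. First I would write $\nu_\beta(|R_{1,2}|^{2m+2}) = \nu_{\beta,0}(|R_{1,2}|^{2m+2}) + \int_0^1 \nu_{\beta,t}'(|R_{1,2}|^{2m+2})\,dt$, but a cleaner approach (following \cite[Chapter 13]{Talbook2} and \cite{Chen17}) is to apply the identity directly at $t=1$: using Gaussian integration by parts on one of the $\sigma_N$ couplings, one obtains an expression of the schematic form
\[
\nu_\beta(|R_{1,2}|^{2m+2}) = \nu_\beta\bigl(|R_{1,2}|^{2m+1}\,\sigma_N^1\sigma_N^2 \cdot (\text{something bounded involving }\beta)\bigr) + (\text{lower order in }1/N).
\]
Here the leading contribution comes from the $j=p$ piece $\beta^2(\sigma_N^1\sigma_N^2)^p$ (for $p$ even this is $\beta^2$, for $p$ odd it is $\beta^2\sigma_N^1\sigma_N^2$), which after summing over the extra spin $\sigma_N$ and using $|\sigma_N|\le M$ gets absorbed into $\nu_\beta(|R_{1,2}|^{2m+3})$ up to a bounded prefactor; this is how $K_1(\beta)$ arises, and one sees immediately that $K_1$ vanishes precisely at $\beta=0$ since it carries a factor $\beta^2$, and that $K_1$ is built from continuous, nondecreasing functions of $\beta$ such as $\beta^2 e^{c\beta^2}$. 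For all the remaining terms ($j<p$, and the $R^-$ versus $R$ discrepancies), each carries an explicit power $N^{-(j-1)}$ with $j\ge 1$ together with a product of overlaps; using the hypothesis $\nu_\beta((R_{1,2})^{2j})\le K_0 N^{-j}$ for $0\le j\le m$, Cauchy--Schwarz, Lemma \ref{jthpower2} to pass between $R^-$ and $R$, and Lemma \ref{easybound} to control the correction $|R_{1,2}^{m+1}-(R_{1,2}^-)^{m+1}|$, every such term is bounded by $K_2(\beta)N^{-(m+1)}$.

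The concrete steps, in order: (1) expand $\nu_\beta(|R_{1,2}|^{2m+2})$ by the cavity formula, isolating the $j=p$ summand; (2) in the $j=p$ summand, bound $|R_{1,2}^-|^{2m+1}\le (|R_{1,2}|+M^2/N)^{2m+1}$ and expand, keeping the top term $\nu_\beta(|R_{1,2}|^{2m+1}|\sigma_N^1\sigma_N^2|)$ — which is $\le M^2\nu_\beta(|R_{1,2}|^{2m+1})$ — and noting that $\nu_\beta(|R_{1,2}|^{2m+1}) \le \nu_\beta(|R_{1,2}|^{2m+2})^{1/2}\nu_\beta(|R_{1,2}|^{2m})^{1/2}$ would not close, so instead one writes $|R_{1,2}|^{2m+1} = |R_{1,2}|^{2m+1}$ and feeds it as $|R_{1,2}|\cdot|R_{1,2}|^{2m}$, then uses $|R_{1,2}|\le |R_{1,2}|^{2m+3}/|R_{1,2}|^{2m+2}$ only on the event that $|R_{1,2}|$ is not too small, splitting at a threshold — actually the standard trick is simply to use, for any $x\ge 0$, $x^{2m+1}\le \eta^{-1}x^{2m+2} + \eta^{2m+1}$ is the wrong direction, so instead bound $x^{2m+2}\le x^{2m+3}/\delta + \delta\cdot(\text{indicator }x\le\delta)\cdot x^{2m+1}$; cleaner: bound $\nu_\beta(f\sigma_N^1\sigma_N^2)$ itself via a further integration by parts or via Hölder so that the power goes \emph{up} to $2m+3$, which is exactly the mechanism in \cite{Chen17}; (3) collect the non-$(j=p)$ terms and the $R^-/R$ corrections, apply the hypothesis together with Lemmas \ref{easybound}, \ref{jthpower2}, \ref{boundderiv} (to move between $\nu_{\beta,t}$ and $\nu_\beta$ if the interpolation route is used), and bound them all by $K_2(\beta)N^{-(m+1)}$; (4) verify the stated regularity of $K_1,K_2$ in $\beta$ by inspecting the explicit constants ($\beta^2$, $M^{2p}$, exponentials of $\beta^2$), which are manifestly continuous and, for $K_1$, nondecreasing and zero iff $\beta=0$.

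The main obstacle I anticipate is step (2): extracting a term with overlap power $2m+3$ (one \emph{higher} than the starting power $2m+2$) rather than one of the same or lower order. This is the delicate point where one must integrate by parts in exactly the right coupling variable so that the differentiation produces an extra overlap factor $R_{1,2}$ (from $\partial_{\sigma_N^1}$ hitting the Gibbs weight through the interaction term involving $\sigma_N^1$), and then argue that the genuinely problematic self-interaction contributions cancel or are of lower order — this cancellation is special to the \emph{pure} $p$-spin model and is why the normalization $H_{N,\beta} = \beta X_N - \tfrac{\beta^2 N}{2}R(\s,\s)^p$ was chosen. One also has to be careful that the $j=p$, $p$ odd case genuinely contributes $\beta^2\sigma_N^1\sigma_N^2$ and not a sign-indefinite obstruction: this is handled exactly as in \cite[Chapter 13]{Talbook2}, since the extra $\sigma_N^1\sigma_N^2$ only improves matters after taking absolute values and using $|\sigma_N|\le M$. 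Everything else is bookkeeping with the binomial formula and the supplied lemmas.
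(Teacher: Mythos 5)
There is a genuine gap in the mechanism you describe, and it is located exactly where you flag the ``delicate point''. You claim the leading contribution comes from the $j=p$ piece $\beta^2(\sigma_N^1\sigma_N^2)^p$, but in Lemma~\ref{deriv} the $j=p$ summand carries a prefactor $1/N^{p-1}$, which for $p\geq 3$ is already $O(N^{-2})$ and is decidedly subleading. The term that actually supplies the power boost is the $j=1$ summand (prefactor $1/N^{0}=1$), which brings in an extra overlap factor $(R^-_{\ell,\ell'})^{p-1}$. Because $p\geq 3$, this extra $(R^-)^{p-1}$ is precisely what raises the power from $2m+1$ to $2m+p\geq 2m+3$; neither an ``indicator trick'' nor a ``further integration by parts'' is needed.

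To make the induction close, the paper also makes a sharper choice of $f$ than you propose. Starting from $f=(R_{1,2})^{2m+2}$ and differentiating in $t$ would leave a nonzero boundary term $\nu_0(f)$ to track. Instead, by symmetry between sites one first writes $\nu_\beta((R_{1,2})^{2m+2})=\nu_\beta(\sigma_N^1\sigma_N^2(R_{1,2})^{2m+1})$, replaces $(R_{1,2})^{2m+1}$ by $(R^-_{1,2})^{2m+1}$ up to an $O(N^{-(m+1)})$ error controlled by Lemma~\ref{easybound}, and only then sets $f:=\sigma_N^1\sigma_N^2(R^-_{1,2})^{2m+1}$. The key fact is that at $t=0$ the last spin decouples, so
\begin{align*}
\nu_0(f)=\Bigl(\int a\,\mu(da)\Bigr)^2\,\nu_0\bigl((R^-_{1,2})^{2m+1}\bigr)=0
\end{align*}
because $\mu$ is centered; this kills the boundary term and gives $\nu_\beta(f)\leq \sup_t|\nu'_{\beta,t}(f)|$. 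Applying Lemma~\ref{deriv} with $n=2$ and H\"older, the $j=1$ term yields $C(\beta)\,\nu_{\beta,t}(|R^-_{1,2}|^{2m+p})$, while every $j\geq 2$ term carries an extra $1/N^{j-1}$ and, after Lemma~\ref{jthpower2} and the induction hypothesis, is $O(N^{-(m+1)})$. Passing back from $R^-$ to $R$ and bounding $M^{2(p-3)}\nu_\beta(|R_{1,2}|^{2m+3})\geq\nu_\beta(|R_{1,2}|^{2m+p})$ finishes the estimate. The factor $\beta^2$ from Gaussian integration by parts (times the $e^{c\beta^2}$ from Lemma~\ref{boundderiv}) is what gives $K_1(\beta)$; you identified this part correctly.

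One further misattribution: the remark that the deterministic normalization term $-\tfrac{\beta^2 N}{2}R(\sigma,\sigma)^p$ is responsible for the needed cancellation is not right. That term is disorder-independent and does not enter the Gaussian integration by parts; it plays no role in this lemma. The iteration closes simply because $p\geq 3$, which makes the $j=1$ cavity term produce $(R^-)^{p-1}$ with $p-1\geq 2$.
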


\begin{proof}
	We divide our proof into four steps.

	{\noindent \textbf{Step 1:}} By symmetry between sites, write
	\begin{align*}
	\nu_\beta((R_{1,2})^{2m  + 2}) = \nu_\beta(\sigma_N^1\sigma_N^2(R_{1,2})^{2m +1}) = \nu_\beta(\sigma_N^1\sigma_N^2(R_{1,2}^-)^{2m +1}) + \mathcal{E}.
	\end{align*}
	Here,
	\begin{align*}
	\mathcal{E}: = \nu_\beta(\sigma_N^1\sigma_N^2((R_{1,2})^{2m +1}-(R_{1,2}^-)^{2m +1}))
	\end{align*}
	can be controlled by Lemmas \ref{easybound} and \ref{jthpower2} as follows:
	\begin{align*}
	|\mathcal{E}| &\leq M^2 \nu\left(|(R_{1,2})^{2m +1}-(R_{1,2}^-)^{2m +1}|\right)\\
	&\leq \frac{2mM^4}{N}\left(\nu_\beta(|R_{1,2}|^{2m })+\nu_\beta(|R_{1,2}^-|^{2m }) \right)\leq \frac{ C_1}{N^{m +1}},
	\end{align*}
	where $C_1 := K_0(2m  M^4 + m2^{2m+1}M^{4m+4}).$
    Thus, we arrive at
	\begin{align}
	\nu_\beta((R_{1,2})^{2m +2}) \leq \nu_\beta(\sigma_N^1\sigma_N^2(R_{1,2}^-)^{2m +1}) + \frac{C_1}{N^{m +1}}. \label{proof1}
	\end{align}
	Next, in order to control the right-hand side, we define $f = \sigma_N^1\sigma_N^2 (R_{1,2}^-)^{2m +1}$. Recall that since $\mu$ is centered, $\nu_0(f) = 0$. This together with an application of the mean value theorem and \eqref{proof1} results in the inequality
	\begin{align}
	\nu_\beta((R_{1,2})^{2m +2}) \leq \nu_\beta(f)+ \frac{C_1}{N^{m +1}}\leq \sup_{0 \leq t \leq 1} |\nu_{\beta,t}'(f)| + \frac{C_1}{N^{m +1}}. \label{proof2}
	\end{align}		
	
	{\noindent\textbf{Step 2:} } We control $|\nu_{\beta,t}'(f)|.$ Applying Lemma \ref{deriv} with $n=2$ and noting that $|\sigma_N^\ell\sigma_N^{\ell'}| \leq M^2$ for any $1 \leq \ell,\ell' \leq n$ yield  the following bound on $|\nu_{\beta,t}'(f)|$:
	\begin{align*}
	\beta^2 \sum_{j=1}^p {p \choose j} \frac{(M^2)^{j+1}}{N^{j-1}}&\left(\nu_{\beta,t}(|R_{1,2}^-|^{2m +1}|R_{1,2} |^{p-j}) +2 \nu_{\beta,t}(|R_{1,2}^-|^{2m +1}|R_{1,3}^-|^{p-j})\right.\\
	&+\left.2\nu_{\beta,t}(|R_{1,2}^-|^{2m +1}|R_{2,3}^-|^{p-j}) +3\nu_{\beta,t}(|R_{1,2}^-|^{2m +1}|R_{3,4}^-|^{p-j} )\right).
	\end{align*}
	For each $1 \leq j \leq p$, set the H\"{o}lder conjugate exponents $$\tau_j^1= \frac{2m +1+p-j}{2m +1},\,\,\tau_j^2 = \frac{\tau_j^1}{1-\tau_j^1}.$$ By H\"older's inequality, for each $1 \leq j \leq p$ and each pair of replica indices $1 \leq \ell,\ell' \leq n$,
	\begin{align*}
	\nu_{\beta,t}(|R_{1,2}^-|^{2m +1}|R_{\ell,\ell'}^-|^{p-j})&\leq \nu_{\beta,t}(|R_{1,2}^-|^{2m +1+p-j})^{1/\tau_j^1}\nu_{\beta,t}(|R_{\ell,\ell'}^-|^{2m +1+p-j})^{1/\tau_j^2}\\
	&=\nu_{\beta,t}(|R_{1,2}^-|^{2m +1+p-j}).
	\end{align*}
	This inequality leads to		
	\[
	|\nu_{\beta,t}'(f)| \leq 8 \beta^2 \sum_{j=1}^p {p\choose j} \frac{(M^2)^{j+1}}{N^{j-1}}\nu_{\beta,t}(|R_{1,2}^-|^{2m +1+p-j}).
	\]
	Consequently, Lemma \ref{boundderiv} allows us to replace $\nu_{\beta,t}$ by $\nu_{\beta}$ on the right-hand side above to obtain that
	
	\begin{align}
	\begin{split}\label{add:eq6}
	|\nu_{\beta,t}'(f)| &\leq 8 e^{2^{p+3}M^{2p}\beta^2}\beta^2 \sum_{j=1}^p {p\choose j} \frac{(M^2)^{j+1}}{N^{j-1}} \nu_{\beta}(|R_{1,2}^-|^{2m +1+p-j}).
	\end{split}
	\end{align}
	{\noindent \bf Step 3:} We break the above sum into two pieces: When $j=1,$
	$\nu_{\beta}(|R_{1,2}^-|^{2m +1+p-j})= \nu_\beta(|R_{1,2}^-|^{2m +p});$
	when $2\leq j\leq p,$ since $M> 1,$
	\begin{align*}
	\nu_{\beta}(|R_{1,2}^-|^{2m +1+p-j})
	& \leq M^{2(1+p-j)}\nu_{\beta}(|R_{1,2}^-|^{2m})\leq \frac{2^{2m}M^{2(1+p-j+2m)}K_0}{N^m}.
	\end{align*}
	The last inequality used the given assumption and Lemma \ref{jthpower2}. Let
	\begin{align*}
	C(\beta) := 8 e^{2^{p+3}M^{2p}\beta^2}\beta^2,\,\,
	C_2 := pM^4, \,\,
	C_3  := 2^{2m+p}M^{2(2m+p+2) }K_0.
	\end{align*}
	From \eqref{add:eq6} and the last two inequalities,
	\[
	|\nu_{\beta,t}'(f)| \leq C(\beta)C_2 \nu_\beta(|R_{1,2}^{-}|^{2m  + p}) + \frac{C(\beta)C_3}{N^{m +1}},
	\]
	and subsequently, plugging this into \eqref{proof2} gives
	\begin{align}
	\nu_\beta((R_{1,2})^{2m +2}) \leq  C(\beta)C_2 \nu_\beta(|R_{1,2}^{-}|^{2m  + p}) + \frac{C(\beta)C_3+C_1}{N^{m +1}}. \label{proof3}
	\end{align}

	{\noindent\textbf{Step 4:}} We may now perform a procedure similar to Step 1 to bring $R_{1,2}^-$ back to $R_{1,2}$. By Lemma~\ref{easybound} and $M>1,$
	\begin{align*}
	\nu_\beta(|R_{1,2}^{- }|^{2m  + p}) & \leq \nu_\beta(|R_{1,2}|^{2m +p})+ \frac{2m  + p}{N}M^2\left(\nu_\beta(|R_{1,2}^{-}|^{2m +p-1})+\nu_\beta(|R_{1,2}|^{2m +p-1}) \right)\\
	& \leq \nu_\beta(|R_{1,2}|^{2m +p}) +\frac{2m  +p}{N}M^{2p}\left(\nu_\beta(|R_{1,2}^-|^{2m })+\nu_\beta(|R_{1,2}|^{2m }) \right).
	\end{align*}
	Consequently, from Lemma \ref{jthpower2}, it follows that
	\begin{align*}
	\nu_\beta(|R_{1,2}^{-}|^{2m  + p}) \leq \nu_\beta(|R_{1,2}|^{2m +p}) + \frac{C_4}{N^{m+1}}.
	\end{align*}
	for 
	$
	C_4 :=(2m+p)M^{2p}K_0(2^{2m}M^{4m}+1).
	$
	Plugging this into \eqref{proof3} and noting that $p\geq 3$ imply
	\begin{align*}
	\nu_\beta((R_{1,2})^{2m +2}) &\leq  C(\beta)C_2 \nu_\beta((R_{1,2})^{2m  + p})+ \frac{C(\beta)(C_2C_4+C_3)+C_1}{N^{m +1}}\\
	&\leq C(\beta)C_2 M^{2(p-3)}\nu_\beta(|R_{1,2}|^{2m  + 3})+ \frac{C(\beta)(C_2C_4+C_3)+C_1}{N^{m +1}}.
	\end{align*}
	Setting
	$
	{K}_1(\beta) = C(\beta)C_2M^{2(p-3)}
	$
	and
	$
	{K}_2(\beta) = C(\beta)(C_2C_4+C_3)+C_1
	$
	completes the proof.
\end{proof}

\subsection{Proof of Theorem \ref{momentcontrol}}\label{sec7.4}

We prove Theorem \ref{momentcontrol} by induction on $m\geq 0.$ Clearly the case $m=0$ is valid. Assume that for some $m\geq 0$, there exists a constant $K\geq 1$ such that \eqref{momentcontrol:eq1} holds for all $N\geq 1$ and $s\in [0,1].$ Our goal is to show that there exists some $K'\geq 1$ such that
\begin{align*}
\nu_{s\beta}(|R_{1,2}|^{2(m +1)}) \leq \frac{K'}{N^{m +1}} 
\end{align*}
for all $N\geq 1$ and $s\in [0,1].$ Let $ {K}_1$ and $ {K}_2$ be the two nonnegative continuous functions from the statement of Lemma \ref{add:lem1} so that for all $N\geq 1$ and $s\in[0,1]$
\begin{align}\label{momentcontrol:proof:eq1}
\nu_{s\beta}(|R_{1,2}|^{2m +2}) \leq   {K}_1(s\beta)\nu_{s\beta}(|R_{1,2}|^{2m +3})+ \frac{ {K}_2(s\beta)}{N^{m +1}}.
\end{align}
Note that $ {K}_1(s\beta)$ is a nondecreasing function in $s$ and $ {K}_1(0)=0.$ Set $$s_0=\sup\Bigl\{s\in [0,1]: {K}_1(s\beta)M^2\leq \frac{1}{2}\Big\}.$$
Now we divide our proof into two cases:

{\noindent \bf Case 1:} ${s \in [0, s_0]}$.  Combining \eqref{momentcontrol:proof:eq1} and the observation that $|R_{1,2}  |\leq M^2$ results in
\begin{align*}
\nu_{s\beta}((R_{1,2})^{2m +2}) &\leq   {K}_1(s_0\beta)M^{2}\nu_{s\beta}(|R_{1,2}|^{2m  + 2})+ \frac{ {K}_2(s\beta)}{N^{m +1}}\\
&\leq  \frac{1}{2}\nu_{s\beta}((R_{1,2})^{2m  + 2})+ \frac{ {K}_2(s\beta)}{N^{m +1}},\,\forall s\in[0,s_0].
\end{align*}
This gives that
\[
\nu_{s\beta}((R_{1,2})^{2m +2}) \leq \frac{2 {K}_2(s\beta)}{N^{m +1}},\,\,\forall s\in[0,s_0].
\]
{\noindent \bf Case 2:} ${s\in (s_0,1]}.$ Choose $\epsilon > 0$ such that
\begin{align}\label{add:eq8}
\epsilon \max_{s\in [s_0,1]}  K_1(s\beta)< \frac{1}{2}.
\end{align}
From Proposition \ref{new:lem0}, there exists a constant $K''$ independent of $N$ and $s\in [s_0,1]$ such that
\[
\nu_{s\beta}(I(|R_{1,2}|>\epsilon))<K'' e^{-N/K''},\,\,\forall N\geq 1.
\]
Note that
\begin{align*}
\nu_{s\beta}(|R_{1,2}|^{2m +3}) & = \nu_{s\beta}(|R_{1,2}|^{2m +3}I(|R_{1,2}|>\epsilon))+\nu_{s\beta}(|R_{1,2}|^{2m +3}I(|R_{1,2}|<\epsilon))\\
& \leq M^{2(2m +3)} \nu_{s\beta}(I(|R_{1,2}|>\epsilon))+ \varepsilon\nu_{s\beta}(|R_{1,2}|^{2m +2}I(|R_{1,2}|<\epsilon))\\
& \leq M^{2(2m +3)}K'' e^{-N/K''} + \epsilon \nu_{s\beta}(|R_{1,2}|^{2m +2}).
\end{align*}
Plugging this into \eqref{momentcontrol:proof:eq1} leads to
\begin{align*}
\nu_{s\beta}(|R_{1,2}|^{2m +2}) & \leq \epsilon K_1(s\beta) \nu_{s\beta}(|R_{1,2}|^{2m +2})+K_1(s\beta)M^{2(2m +3)}K'' e^{-N/K''}+\frac{K_2(s\beta)}{N^{m +1}}.
\end{align*}
Thus, from  \eqref{add:eq8}, we conclude that for all $N\geq 1$ and $s\in [s_0,1]$
\[
\nu_{s\beta}(|R_{1,2}|^{2m +2}) \leq 2K_1(s\beta)M^{2(2m +3)}K'' e^{-N/K''}+\frac{2K_2(s\beta)}{N^{m +1}}.
\]
Finally, from the above two cases, our proof is completed by taking $K'$ as the supremum of this bound for $s\in [0,1].$

\subsection{Proof of Proposition \ref{fluctuations}}\label{sec7.5}

The proof of Proposition \ref{fluctuations} relies on the following bound: for any $p \geq 2, \beta > 0$, and $l > 0$,
\begin{align*}
\p \bigl(|F_N(\beta)| \geq l\bigr) \leq \frac{2\beta^2}{l^2N} \E \bigl\< |R_{1,2} |^p\bigr\>_{\beta} + \frac{2}{l^2}\Bigl(\int_0^\beta t \E \bigl\< |R_{1,2} |^p\bigr\>_{t} dt \Bigr)^2.
\end{align*}
This result is essentially taken from \cite[Lemma 10]{Chen17}. Although there the spin configurations are sampled from the uniform probability measure on the hypercube $\{-1,+1\}^N,$ the same argument applies to the current general setting. Now, from this and the moment control in Theorem \ref{momentcontrol}, there exists a constant $K> 0$ such that
\begin{align*}
\p \bigl(|F_N(\beta)| \geq l \bigr) &\leq \frac{2\beta^2K}{N^{p/2}l^2N} + \frac{2}{l^2}\Bigl(\int_0^\beta \frac{tK}{N^{p/2}}dt\Bigr)^2 = \frac{2\beta^2K}{N^{p/2+1}l^2} + \frac{\beta^4K^2}{2N^pl^2}\leq \frac{2\beta^2K}{N^{p/2+1}l^2}(1+\beta^2K).
\end{align*}
This completes our proof.

\section{Structure of the Regime $\bar {\RR}$} \label{high_temp_k>1}

This section presents the proof of Theorem \ref{thm6}. 
Recall the probability spaces $(\Lambda_r,\mu_r)$, the temperature vector $\bar \beta = (\beta_1,\dots,\beta_r)$, the Hamiltonian $H_{N,\bar \beta}$, the free energy $F_N(\bar \beta)$, the Gibbs measure $G_{N,\bar \beta}$, and the critical temperatures $\beta_{r,c}$ from Section \ref{hd}. For each $1 \leq r \leq k$, let $H_{N,\beta_r}, F_{N,r}(\beta_r)$, and $F_r(\beta_r)$ be the Hamiltonian, free energy, and limiting free energy, respectively, corresponding to the scalar-valued spin glass in Section \ref{vec_spin_model} with temperature $\beta_r$ and probability space $(\Lambda_r, \mu_r)$. Denote $v_{r,*}=\int a^2\mu_r(da)$.

\subsection{Concentration of Total Overlap}
Let $M_k(\mathbb{R})$ be the space of real-valued $k\times k$ matrices equipped with the metric $$\|V-V'\|_{\max}=\max_{1\leq r,r'\leq k}|V_{r,r'}-V_{r,r'}'|.$$
For any $\varepsilon>0$ and $V\in M_k(\mathbb{R})$, let $A_\varepsilon(V)$ be the collection of all $V'\in M_k(\mathbb{R})$ with $\|V-V'\|_{\max}<\varepsilon.$ Denote the total overlap matrix by $$\mathbf{R}(\bar \s)=\mathbf{R}(\s(1),\ldots,\s(k)):=(R(\s(r),\s(r')))_{1\leq r,r'\leq k}.$$  Set $V_*=(V_{*,r,r'})_{1\leq r,r'\leq k}\in M_k(\mathbb{R})$ where $V_{*,r,r}=v_{*,r}$ and $V_{*,r,r'}=0$ for $r\neq r'.$
For any measurable subset $A\subseteq M_k(\mathbb{R}),$ define the restricted free energy $F_N(\bar \beta,A)$ as
\begin{align*}
F_N(\bar \beta,A)=\frac{1}{N}\int_{{\bf R}(\bar \s)\in A}e^{H_{N,\bar \beta}(\bar \s)}\bar \mu^{\otimes N}(d\bar \s).
\end{align*}
For i.i.d. samplings $\bar \s,\bar \s^1,\bar \s^2,\ldots$ from $G_{N,\bar \beta}$, denote by $\la \cdot\ra_{\bar \beta}$ the Gibbs expectation with respect to these random variables. 

The following proposition states that the self-overlap of $\bar \s$ sampled from the Gibbs measure $G_{N,\bar \beta}$ is concentrated around $V_*$ in the high-temperature regime $\bar  \RR$.

\begin{prop}\label{new:lem2}
	Assume that $\bar \beta\in \bar \RR$. Let $\bar \s$ be sampled from $G_{N,{\bar \beta}}.$ For any $\varepsilon>0,$ there exist positive constants $K$ and $\delta$ such that for any $N\geq 1,$
	 with probability at least $1-Ke^{-N/K}$,
	\begin{align}\label{new:lem2:eq2}
	F_N(\bar \beta,A_\varepsilon(V_*)^c)&\leq F_N(\bar \beta)-\delta.
	\end{align}
\end{prop}

\begin{proof} We adapt a similar argument as the one for Lemma \ref{add:lem0}.
   Let $\bar \beta\in \bar {\mathcal{R}}$. For $1 \leq r,r' \leq k$, define
	\[
	A_{\epsilon}(r,r') = \bigl\{V\in M_k(\mathbb{R})\big| |V_{r,r'} - (V_*)_{r,r'}|< \epsilon\bigr\}.
	\]
	Note that
	$
	\cap_{1 \leq r,r' \leq k} A_{\epsilon}(r,r')= A_{\epsilon}(V_*) .
	$
	Using Jensen's inequality and $\e e^{H_{N,\bar \beta}(\bar \sigma)}=1$ yields
	\begin{align*}
	\limsup_{N\rightarrow\infty}\e F_N(\bar \beta,A_\varepsilon(V_*)^c)
	& \leq \limsup_{N\rightarrow\infty}\frac{1}{N}\log \sum_{1 \leq r,r' \leq k} \int_{{\bf R}(\bar \s)\in A_{\epsilon}(r,r')^c}{\bar \mu}^{\otimes N}(d\bar \s).
	\end{align*}		
	Note that the coordinates $\sigma_1(r),\ldots,\sigma_N(r)$ of $\sigma(r)$ are i.i.d. with distribution $\mu_r$, which has bounded support. Also, note that $\sigma(r)$ is independent of $\sigma(r')$ for any $r \neq r'$. In addition, the mean of $\sigma_1(r)\sigma_1(r')$ under $\bar {\mu}$ is equal to $(V_*)_{r,r'}$ for any $1\leq r,r'\leq k.$ By Cram\'{e}r's theorem, there exists a positive constant $\delta$ such that
\[
\sum_{1 \leq r,r' \leq k} \int_{{\bf R}(\bar \s) \in A_{\epsilon}(r,r')^c}\mu^{\otimes N}(d\bar \s) \leq k^2 e^{-N \delta}.
\]
Note that $F(\bar \beta)=0.$ It follows that
	\begin{align*}
	\limsup_{N\rightarrow\infty}\e F_N(\bar \beta,A_\varepsilon(V_*)^c)&\leq F(\bar \beta)-\delta.
	\end{align*}
	Finally, \eqref{new:lem2:eq2} follows by using the Gaussian concentration inequality for $F_N(\bar \beta,A_\varepsilon(V_*)^c)$ and $F_N(\bar \beta)$. We omit the details here as they are the same as those in the proof of Lemma \ref{add:lem0}.
\end{proof}

\subsection{Proof of Theorem \ref{thm6}: $\bar {\RR} \subseteq (0,\beta_{1,c}]\times\cdots\times (0,\beta_{k,c}]$ }

Suppose that $\bar \beta = (\beta_1,\dots \beta_k)\in \bar \RR$. By the definition of $\bar \RR$, $F(\bar \beta) = 0$. Let $\varepsilon>0$. On the one hand, recall that from Proposition \ref{new:lem2}, there exist two positive constants $K,\delta>0$ such that for any $N\geq 1,$ with probability at least $1-Ke^{-N/K}$,
\begin{align}\label{add:eq-12}
F_{N}(\bar \beta,A_\varepsilon(V_*)^c)\leq F_N(\bar {\beta})-\delta.
\end{align}
On the other hand, note that $V_{r,r'}=0$ for $r\neq r'$ and that ${\bf R}(\bar \s)\in A_\varepsilon(V_*)$ implies 
\begin{align*}
|R(\s(r),\s(r'))^p|=|R(\s(r),\s(r'))^p-V_{r,r',*}^p|\leq \varepsilon^p,\,\,\forall 1\leq r\neq r'\leq k.
\end{align*}
From this, we can bound the overlap terms and then release the constraint $A_\varepsilon(V_*)$ to get
\begin{align}\label{add:eq-13}
F_{N}(\bar \beta,A_\varepsilon(V_*))\leq \sum_{r=1}^k F_{N,r}(\beta_r)+\frac{ \varepsilon^p}{2}\sum_{r\neq r'} \beta_r\beta_{r'}.
\end{align}
Note that $$F_N(\bar\beta)=N^{-1}\log \bigl(e^{NF_N(\bar\beta,A_\varepsilon(V_*))}+e^{NF_N(\bar\beta,A_\varepsilon(V_*)^c)}\bigr)$$ and that $$
\log (x+y)\leq \log 2+\max(\log x,\log y),\,\,\forall x,y>0.$$
From \eqref{add:eq-12}, \eqref{add:eq-13}, and these two displays, after taking $N\to\infty,$
\begin{align*}
0=F(\bar \beta)
&\leq \max\Bigl(F(\bar \beta)-\delta,\sum_{r=1}^k F_{r}(\beta_r)+\frac{ \varepsilon^p}{2}\sum_{r\neq r'} \beta_r\beta_{r'}\Bigr)\\
&=\max\Bigl(-\delta,\sum_{r=1}^k F_{r}(\beta_r)+\frac{ \varepsilon^p}{2}\sum_{r\neq r'} \beta_r\beta_{r'}\Bigr).
\end{align*}
Consequently, we obtain
\begin{align*}
0\leq \sum_{r=1}^k F_{r}(\beta_r)+\frac{ \varepsilon^p}{2}\sum_{r\neq r'} \beta_r\beta_{r'}
\end{align*}
and letting $\varepsilon\downarrow 0$ yields $\sum_{r=1}^k F_{r}(\beta_r)\geq 0.$ Since $F_r(\beta_r) \leq 0$ for all $1 \leq r \leq k$, we must have $F_r(\beta_r) = 0$ for all $r$. Hence, $\beta_r \in (0,\beta_{r,c}]$ for all $1 \leq r \leq k$ by Proposition \ref{prop1}.	This establishes that $\bar {\mathcal{R}}\subseteq(0,\beta_{1,c}]\times\cdots(0,\beta_{k,c}].$

\subsection{Proof of Theorem \ref{thm6}: $\bar {\RR} \supseteq (0,\beta_{1,c}]\times\cdots\times (0,\beta_{k,c}]$ }

We divide our discussion into two cases.

{\noindent \bf Case 1:} $\bar \beta\in (0,\beta_{1,c})\times\cdots(0,\beta_{k,c})$, but $\mathbf{\bar \beta \not \in \bar \RR}.$ In this case, $F_r(\beta_r) = 0$ for $1 \leq r \leq k$ and $F(\bar \beta) < 0$. Then there exists a positive constant $\eta$ such that $\E F_N(\bar \beta) < -\eta$ for large enough $N$. Note that for any $\epsilon > 0$ and $N \geq 1$,
\[
F_N(\bar \beta,A_{\epsilon}( V_* )) \leq F_N(\bar \beta).
\]
Thus, from the Gaussian concentration inequality for $F_N(\bar \beta)$, there exists a constant $K>0$ such that for any large enough $N,$ with probability at least $1-Ke^{-N/K}$, 
\[
F_N(\bar \beta,A_{\epsilon}( V_* ))  \leq F_N(\bar \beta)<-\frac{\eta}{2}.
\]
Note that the off-diagonal entries of $V_*$ are all zero. The restriction $A_{\epsilon}( V_* )$ allows us to pull the off-diagonal entries of the total overlap outside of the free energy to get
\[
\frac{1}{N}  \log \int_{{\bf R}(\bar \s)\in A_{\epsilon}( V_* )} \exp \Bigl(\sum_{r=1}^k H_{N,\beta_r}(\s(r)) \Bigr) \mu^{\otimes N}(d \bar \s) < \frac{\varepsilon^p}{2}\sum_{r\neq r'} \beta_r\beta_{r'} - \frac{\eta}{2}.
\]	
Now, if we take $\epsilon>0$ with
${\varepsilon^p}\sum_{r\neq r'} \beta_r\beta_{r'} < \eta/2,
$
then the above inequality reduces to
\begin{align}
\frac{1}{N} \log \int_{{\bf R}(\bar \s )\in A_{\epsilon}( V_* )} \exp \Bigl(\sum_{r=1}^k H_{N,\beta_r}(\s(r)) \Bigr) \bar \mu^{\otimes N}(d \bar \s) < -\frac{\eta}{4}. \label{etaover2}
\end{align}
Denote by $\la \cdot\ra'$ the Gibbs average with respect to the independent samplings
\begin{align*}
\bar \s&=(\s(1),\ldots,\s(k)),\quad\bar \s^1=(\s^1(1),\ldots,\s^1(k)),\quad
\bar \s^2=(\s^2(1),\ldots,\s^2(k))
\end{align*}
from the product measure
$\prod_{r=1}^kG_{N,\beta_r}(d\s(r)).$
The combination of \eqref{etaover2}, the fact that $F_r(\beta_r)=0$,  and the Gaussian concentration inequality for $F_r(\beta_r)$ implies that the self-overlap matrix $\mathbf{R}(\bar \s )$ is concentrated around $V_*$ in the sense that there exists a constant $K'>0$ such that for sufficiently large $N,$
\begin{align}\label{ld:thm1:proof:eq0}
\e \bigl\la I\bigl(\mathbf{R}(\bar \s )\in A_\varepsilon(V_*)\bigr)\bigr\ra'\leq K'e^{-N/K'}.
\end{align}

Next, in order to deduce a contradiction, we recall that Lemma \ref{add:lem0} states
\begin{align}
\begin{split}
\label{ld:thm1:proof:eq1}
&\lim_{N\rightarrow\infty}\e\bigl\la I\bigl(|R(\s(r),\s(r))-v_{r,*}|\leq \delta\bigr)\bigr\ra'=1.
\end{split}
\end{align}
Here we used the fact that for a sampling $\bar \s$ from $\la \cdot\ra'$, the components $\s(1),\ldots,\s(k)$ are independent of each other. For the same reason, it also follows from Proposition \ref{new:lem0} and the assumption $\bar \beta\in (0,\beta_{1,c})\times\cdots\times (0,\beta_{k,c})$ that for any $\delta>0$ and $1\leq r\leq k,$
\begin{align}
\begin{split}\label{ld:thm1:proof:eq2}
&\lim_{N\rightarrow\infty}\e\bigl\la I\bigl(|R(\s^1(r),\s^2(r))|\leq \delta\bigr)\bigr\ra'=1.
\end{split}
\end{align}
For $r\neq r',$ observe that
\begin{align*}
\e \bigl\la R(\s(r),\s(r'))^2 \bigr\ra'&=\frac{1}{N^2}\sum_{i,j=1}^N\e\bigl\la \s_i(r)\s_j(r)\big\ra'\bigl\la\s_i(r')\s_j(r')\bigr\ra',
\end{align*}
where the second equality holds since $\s_i(r),\s_j(r)$ are independent of $\s_i(r'),\s_j(r')$ under $\la \cdot\ra'.$ Consequently, an application of the Cauchy-Schwarz inequality implies that
\begin{align*}
\e \bigl\la R(\s(r),\s(r'))^2 \bigr\ra'&\leq \Bigl(\frac{1}{N^2}\sum_{i,j=1}^N\e\bigl(\bigl\la \s_i(r)\s_j(r)\big\ra'\bigr)^2\Bigr)^{1/2}\Bigl(\frac{1}{N^2}\sum_{i,j=1}^N\e\bigl(\bigl\la \s_i(r')\s_j(r')\big\ra'\bigr)^2\Bigr)^{1/2}\\
&=\Bigl(\e\bigl\la R(\s^1(r),\s^2(r))^2\bigr\ra'\Bigr)^{1/2}\Bigl(\e\bigl\la R(\s^1(r'),\s^2(r'))^2\bigr\ra'\Bigr)^{1/2},
\end{align*}
where the last equality uses the identity
\begin{align*}
\frac{1}{N^2}\sum_{i,j=1}^N\e\bigl(\bigl\la \s_i(r)\s_j(r)\big\ra'\bigr)^2&=\frac{1}{N^2}\sum_{i,j=1}^N\e\bigl\la \s_i^1(r)\s_j^1(r)\s_i^2(r)\s_j^2(r)\big\ra'=\e \bigl\la R(\s^1(r),\s^2(r))^2 \bigr\ra',
\end{align*}
which also holds when $r'$ replaces $r$. From the above inequality and \eqref{ld:thm1:proof:eq2},
\begin{align*}
\lim_{N\rightarrow\infty}\e \bigl\la R(\s(r),\s(r'))^2 \bigr\ra'=0,
\end{align*}
which means that $R(\s(r),\s(r'))$ is essentially concentrated at $0$ under $\la \cdot\ra'.$ Combining the latter observation, the inequality $\eqref{ld:thm1:proof:eq1}$, and the fact that the off-diagonal entries of $V^*$ are all zero yields
\begin{align*}
\lim_{N\rightarrow\infty}\e \bigl\la I\bigl({\bf R}(\bar \s )\in A_\varepsilon(V_*)\bigr)\bigr\ra'=1.
\end{align*}
However, this contradicts \eqref{ld:thm1:proof:eq0}. Thus, we must have $\bar \beta\in \mathcal{R}$. 

{\noindent \bf Case 2:} $\bar \beta = (\beta_1, \dots, \beta_k)\in(0,\beta_{1,c}]\times\cdots\times(0,\beta_{k,c}]$, but $\bar \beta = (\beta_1, \dots, \beta_k) \not \in (0,\beta_{1,c})\times\cdots\times(0,\beta_{k,c}).$ Note that the free energies $F,F_1,\ldots,F_k$ are continuous functions of the temperature parameters. We can approximate $F(\bar \beta)$ by $F(\bar \beta')$ for $\bar \beta'\in(0,\beta_{1,c})\times\cdots\times(0,\beta_{k,c})$. From this and Case $1$, we see that $F(\bar \beta)=0$ and so $\bar \beta\in \bar {\mathcal{R}}$. 

\appendix

\section{Proof of the Guerra-Talagrand Bound}

Recall $X_N$ from Section \ref{vec_spin_model} and recall $\mathcal{V}$, $\mathcal{M}_{v,v_0}$, and $\mathcal{P}_{\beta,V}$ from Section \ref{sec6.1}. The goal of this appendix is to give a sketch of the proof for the Guerra-Talagrand inequality stated in \eqref{GTB}.
We follow the same argument in \cite[Proposition 2]{Chen17}.

{\noindent \bf Step 1:} Fix $v_0$ and assume that $\alpha\in \mathcal{M}_{v,v_0}$ is of the form $\alpha(s)=m1_{[0,v_0)}(s)+1_{[v_0,v]}(s)$ for some $m\in [0,1].$ Let $m_0=0<m_1=m<m_2<m_3=1.$ Let $(c_\tau)_{\tau\in \mathbb{N}^2}$ be the Ruelle probability cascades associated to $0<m_1<m_2<1$, see \cite[Subsection 14.1]{Talbook2}. Set 
\begin{align*}
\rho_0^{11}&=0,\rho_1^{11}=v_0,\rho_2^{11}=v,\\
\rho_0^{12}&=0,\rho_1^{12}=v_0,\rho_2^{12}=v_0,\\
\rho_0^{21}&=0,\rho_1^{21}=v_0,\rho_2^{21}=v_0,\\
\rho_0^{22}&=0,\rho_1^{11}=v_0,\rho_2^{22}=v.
\end{align*} 
Assume that $(z_1^1,z_1^2)$ and $(z_2^1,z_2^2)$ are two independent Gaussian random vectors with mean zero and covariance,
$$
\e z_a^{\ell}z_a^{\ell'}=\xi'(\rho_a^{\ell\ell'})-\xi'(\rho_{a-1}^{\ell\ell'})
$$
for $a=1,2$ and $\ell,\ell'=1,2.$ Let $(z_{i,1,j_1}^1,z_{i,1,j_1}^2)_{j_1\in \mathbb{N}}$ for $1\leq i\leq N$ be i.i.d. copies of $(z_1^1,z_1^2)$ and $(z_{i,2,j_1,j_2}^1,z_{i,2,j_1,j_2}^2)_{(j_1,j_2)\in \mathbb{N}^2}$ for $1\leq i\leq N$ be i.i.d. copies of $(z_2^1,z_2^2).$ These are also independent of each other. For $0\leq t\leq 1,$ consider the interpolating Hamiltonian
$$
X_{N,t}(\sigma^1,\sigma^2,\tau):=\sqrt{t}(X_N(\sigma^1)+X_N(\sigma^2))+\sqrt{1-t}\sum_{1\leq i\leq N}\sum_{\ell=1,2}\sigma_i^\ell\bigl(z_{i,1,j}+z_{i,2,\tau_1,\tau_2}\bigr)
$$
for $\sigma^1,\sigma^2\in \Lambda^N$ and $\tau=(\tau_1,\tau_2)\in \mathbb{N}^2.$ Define the interpolating free energy
$$
\phi_{N,\eta}(t)=\frac{1}{N}\e \log\sum_{\tau\in \mathbb{N}^2}c_{\tau}\int_{{\mathbf{R}(\sigma^1,\sigma^2)}\in A_\eta(V)} e^{\beta X_{N,t}(\sigma^1,\sigma^2,\tau)}\mu^{\otimes N}(d\sigma^1)\mu^{\otimes N}(d\sigma^2).
$$
Note that $$
\phi_{N,\eta}(1)=\frac{1}{N}\e\log \int_{\mathbf{R}(\s^1,\s^2)\in A_\eta(V)}e^{\beta X_{N}(\s^1)+\beta X_{N}(\s^2)}\mu^{\otimes N}(d\s^1)\mu^{\otimes N}(d\s^2).
$$
and $\phi_{N,\eta}(0)$ involves only linear spin interactions.

{\noindent \bf Step 2:} We proceed to compute the derivative of $\phi_{N,\eta}(t).$ Consider the Gibbs measure 
$$
G_{N,\eta,t}(d\sigma^1,d\sigma^2,\tau):=\frac{1}{Z_{N,\eta,t}}c_{\tau}e^{\beta X_{N,t}(\sigma^1,\sigma^2,\tau)}\mu^{\otimes N}(d\sigma^1)\mu^{\otimes N}(d\sigma^2)
$$
for $(\sigma^1,\sigma^2,\tau)\in \Lambda^N\times\Lambda^N\times\mathbb{N}^2$ satisfying that $\mathbf{R}(\sigma^1,\sigma^2)\in A_\eta(V),$ where $Z_{N,\eta,t}$ is the normalizing constant. 
Let $\la \cdot\ra_{N,\eta,t}$ be the Gibbs expectation associated to this free energy. Denote by $(\s^1,\s^2,\tau)$ and $(\hat{\sigma}^{1},\hat{\sigma}^{2},\hat\tau)$ two independent samplings from $G_{N,\eta,t}$. We set the overlaps between these two pairs by $Q^{\ell\ell'}=R(\sigma^\ell,{\hat\sigma}^{\ell'})$ for $1\leq \ell,\ell'\leq 2$ and $$
\tau\wedge\hat\tau=\left\{
\begin{array}{ll}
0,&\mbox{if $\tau_1\neq \tau_2$},\\
1,&\mbox{if $\tau_1=\hat{\tau}_1,\tau_2\neq \hat{\tau}_2$},\\
2,&\mbox{if $\tau_1=\hat{\tau}_1,\tau_2= \hat{\tau}_2$}.
\end{array}\right.
$$
From the same computation in \cite[Chapter 15]{Talbook2} that uses Gaussian integration by parts, 
\begin{align*}
\phi_{N,\eta}'(t)&=-\Pi_{N,\eta}(t)-E_{N,\eta}(t)+O(\eta)
\end{align*}
for
\begin{align*}
\Pi_{N,\eta}(t)&:=\frac{\beta^2}{2}\sum_{\ell,\ell'=1,2}\Bigl(\theta(\rho_{2}^{\ell \ell '})-\e\bigl\la \theta(\rho_{\tau\wedge\hat\tau }^{\ell  \ell  '})\bigr\ra_{N,\eta,t}\Bigr),\\
E_{N,\eta}(t)&:=\frac{\beta^2}{2}\sum_{\ell  ,\ell  '=1,2}\e\bigl\la \Gamma\bigl(\rho_{\tau\wedge\hat\tau  }^{\ell  \ell  '},Q^{\ell \ell '}\bigr)\bigr\ra_{N,\eta,t},
\end{align*}
where $O(\eta)$ means that it uniformly vanishes as $\eta\downarrow 0$, $\theta(x):=x\xi'(x)-\xi(x)$, and $\Gamma(x,y):=\xi(y)-y\xi'(x)+\theta(x)$.

{\noindent \bf Step 3:} We handle $\Pi_{N,\eta}(t)$ and $E_{N,\eta}(t)$ as follows. First, using the fact (see \cite[Section 14.1]{Talbook2}) that 
\begin{align}\label{add:eq-3}
\e\la I(\tau\wedge \hat\tau=a)\ra=m_{a+1}-m_a,\,\,0\leq a\leq 2
\end{align}
leads to
\begin{align*}
\Pi_{N,\eta}(t)&=\beta^2\bigl(2m\bigl(\theta(v_0)-\theta(0)\bigr)+m_2\bigl(\theta(v)-\theta(v_0)\bigr)\bigr).
\end{align*}
The treatment for $E_{N,\eta}(t)$ is the harder part. If $p$ is an even number, we obviously have
\begin{align}\label{add:eq-1}
\Gamma(x,y)\geq 0,\,\,\forall x,y\in \mathbb{R},
\end{align}
which implies $E_{N,\eta}(t)\geq 0.$ If $p$ is odd, \eqref{add:eq-1} is no longer valid and we do not have an obvious sign for $E_{N,\eta}(t).$ 
The idea to overcome this difficulty is to add an asymptotically vanishing perturbation to the interpolating Hamiltonian such that the entries in the overlap matrix $(Q^{\ell\ell'})_{1\leq \ell,\ell'\leq 2}$ are synchronized in the limit, see \cite{PanMultipSP}. A crucial fact here is that this property will force the overlap matrix to be asymptotically positive semi-definite under $\e \la \cdot\ra_{N,\eta,t}$ for all $0\leq t\leq 1$ as $N\to\infty$ and $\eta\downarrow 0$, see \cite{Panchenko2015}. For the precise choice of this perturbation, we refer the reader to the proof of \cite[Proposition 2]{Chen17}. 

For clarity, we adapt the same notation for the free energy and the Gibbs expectation. Note that the procedure of adding an asymptotically vanishing perturbation will also not affect the value of the coupled free energy when $N\to\infty$ and $\eta\downarrow 0$ and the derivative of the corresponding $\phi_{N,\eta}(t)$ still has the same form. Now write
\begin{align}
\begin{split}\label{add:eq-2}
E_{N,\eta}(t)&=\frac{\beta^2}{2}\e\Bigl\la I(\tau\wedge \hat\tau=0)\sum_{\ell  ,\ell  '=1,2}\Gamma\bigl(Q^{\ell \ell '},0\bigr)\Bigr\ra_{N,\eta,t}\\
&+\frac{\beta^2}{2}\e\Bigl\la I(\tau\wedge \hat\tau=1)\sum_{\ell  ,\ell  '=1,2}\Gamma\bigl(Q^{\ell \ell '},v_0\bigr)\Bigr\ra_{N,\eta,t}\\
&+\frac{\beta^2}{2}\e\Bigl\la I(\tau\wedge \hat\tau=2)\sum_{\ell  ,\ell  '=1,2}\Gamma\bigl(Q^{\ell \ell '},\rho_{2 }^{\ell  \ell  '}\bigr)\Bigr\ra_{N,\eta,t}.
\end{split}
\end{align}
To hand this, we recall a lemma from \cite[Lemma 11]{Chen17}\footnote{Although the statement there also requires $s\leq 1$ and $x,y,z\in [-1,1]$, they are not actually needed in the proof. Hence the result is still valid under the present assumption.} that for any $s\geq 0$ and $x,y,z\in \mathbb{R}$ so that $$
\left[
\begin{array}{cc}
x&z\\
z&y
\end{array}\right]
$$
is positive semi-definite, we have 
$$
\Gamma(s,x)+\Gamma(s,y)+2\Gamma(s,z)\geq 0.
$$
By using the positive semi-definiteness of the overlap matrix, this lemma implies that the first and second terms on the right-hand side of \eqref{add:eq-2} are asymptotically nonnegative.
Also, from \eqref{add:eq-3}, the third term of \eqref{add:eq-2} is upper bounded by $C(1-m_2)$. Hence, when $p$ is odd, we have 
\begin{align*}
\liminf_{\eta\downarrow 0}\liminf_{N\to\infty}E_{N,\eta}(t)&\geq -C(1-m_2).
\end{align*}
To summarize, from the above discussion, no matter if $p$ is even or odd,
\begin{align}
\begin{split}\label{add:eq-4}
&\lim_{\eta\downarrow 0}\limsup_{N\to\infty}\phi_{N,\eta}(1)\\
&\leq \lim_{\eta\downarrow 0}\limsup_{N\to\infty}\phi_{N,\eta}(0)+\limsup_{\eta\downarrow 0}\limsup_{N\to\infty}\int_0^1\phi_{N,\eta}'(t)dt\\
&\leq\lim_{\eta\downarrow 0}\limsup_{N\to\infty}\phi_{N,\eta}(0) -\beta^2\bigl(2m\bigl(\theta(v_0)-\theta(0)\bigr)+m_2\bigl(\theta(v)-\theta(v_0)\bigr)\bigr)+C(1-m_2).
\end{split}
\end{align}

{\noindent \bf Step 4:} Our last step is to rearrange the right-hand side of the last inequality. Note that we can release the constraint ${\bf R}(\sigma^1,\sigma^2)\in A_\eta(V)$ in $\phi_{N,\eta}(0)$ by introducing a Lagrange variable $\lambda\in M_2(\mathbb{R})$ so that
\begin{align*}
\phi_{N,\eta}(0)&\leq \frac{1}{N}\e\log \sum_{\tau\in \mathbb{N}^2}c_\tau\int e^{\beta X_{N,0}(\sigma^1,\sigma^2,\tau)+N\la\lambda, {\bf R}(\sigma^1,\sigma^2)\ra} \mu^{\otimes N}(d\sigma^1)\mu^{\otimes N}(d\sigma^2)-\la\lambda,V\ra+\eta.
\end{align*}
If we denote
\begin{align*}
w_{i,\tau}^1&=z_{i,1,\tau_1}^1+z_{i,2,\tau_1,\tau_2}^1,\,\,w_{i,\tau}^2=z_{i,1,\tau_1}^2+z_{i,2,\tau_1,\tau_2}^2,\,\,
w^1=z_1^1+z_2^1,\,\,w^2=z_1^2+z_2^2,
\end{align*}
then the first term on the right-hand side of the last inequality can be written as
\begin{align*}
&\frac{1}{N}\e\log\sum_{\tau\in \mathbb{N}^2}c_\tau \prod_{i=1}^N\int \exp \beta\Bigl(w_{i,\tau}^1\s_i^1+w_{i,\tau}^2\sigma_i^2+\sum_{\ell,\ell'=1,2}\lambda_{\ell\ell'}\s_i^\ell \s_i^{\ell'}\Bigr)\mu(d\s_i^1)\mu(d\s_i^2)\\
&=\frac{1}{m_1}\log \e_1\exp \frac{m_1}{m_2}\log \e_2\Bigl[\exp  \Bigl( m_2\log \int e^{\beta w^1a^1+\beta w^2a^2+\la\lambda a,a\ra}\mu(da^1)\mu(da^2)\Bigr)\Bigr],
\end{align*}
where $\e_1$ is the expectation with respect to $(z_1^1,z_1^2)$ only and $\e_2$ is the expectation with respect to $(z_2^1,z_2^2)$ only. Here, this equality is valid by using \cite[Theorem 14.2.1]{Talbook2}. From this, after sending $m_2\to 1,$ it can be checked directly by the Cole-Hopf transformation that the last equation is indeed equal to $\Psi_{\beta,V,\alpha}(0,0,\lambda).$ On the other hand, the second and third terms in the last line of \eqref{add:eq-4} together equal
$$
\beta^2\Bigl(\int_0^v\xi''(s)s\alpha(s)ds+\int_0^{v_0}\xi''(s)s\alpha(ds)\Bigr).
$$
These and \eqref{add:eq-4} complete our proof.

\bibliographystyle{plain}
{\footnotesize\bibliography{ref}}

\begin{thebibliography}{10}

\bibitem{ALR}
M.~Aizenman, J.~L. Lebowitz, and D.~Ruelle.
\newblock Some rigorous results on the {S}herrington-{K}irkpatrick spin glass
  model.
\newblock {\em Comm. Math. Phys.}, 112(1):3--20, 1987.

\bibitem{AC14}
A.~Auffinger and W.-K. Chen.
\newblock The {P}arisi formula has a unique minimizer.
\newblock {\em Comm. Math. Phys.}, 335(3):1429--1444, 2015.

\bibitem{BBP}
J.~Baik, G.~Ben~Arous, and S.~P\'ech\'e.
\newblock Phase transition of the largest eigenvalue for nonnull complex sample
  covariance matrices.
\newblock {\em Ann. Probab.}, 33(5):1643--1697, 2005.

\bibitem{baik2}
J.~Baik and J.~Silverstein.
\newblock Eigenvalues of large sample covariance matrices of spiked population
  models.
\newblock {\em J. Multivariate Anal.}, 97(6):1382--1408, 2006.

\bibitem{BDMK+16}
J.~Barbier, M.~Dia, N.~Macris, and F.~Krzakala.
\newblock The mutual information in random linear estimation.
\newblock {\em 54th Annual Allerton Conference on Communication, Control, and
  Computing}, pages 625--632, 2016.

\bibitem{BDM+16}
J.~Barbier, M.~Dia, N.~Macris, F.~Krzakala, T.~Lesieur, and L.~Zdeborov\'{a}.
\newblock Mutual information for symmetric rank-one matrix estimation: A proof
  of the replica formula.
\newblock {\em Advances in Neural Information Processing Systems}, 29:424--432,
  2016.

\bibitem{BKMMZ+17}
J.~Barbier, F.~Krzakala, N.~Macris, L.~Miolane, and L.~Zdeborov\'{a}.
\newblock Phase transitions, optimal errors and optimality of message-passing
  in generalized linear models.
\newblock {\em ArXiv e-prints}, 2017.

\bibitem{BM+17}
J.~Barbier and N.~Macris.
\newblock The stochastic interpolation method: A simple scheme to prove replica
  formulas in bayesian inference.
\newblock {\em ArXiv e-prints}, 2017.

\bibitem{BMDK+17}
J.~Barbier, N.~Macris, M.~Dia, and F.~Krzakala.
\newblock Mutual information and optimality of approximate message-passing in
  random linear estimation.
\newblock {\em ArXiv e-prints}, 2017.

\bibitem{BMM+17}
J.~Barbier, N.~Macris, and L.~Miolane.
\newblock The layered structure of tensor estimation and its mutual
  information.
\newblock {\em 55th Annual Allerton Conference on Communication, Control, and
  Computing}, 2017.

\bibitem{Tindel}
X.~Bardina, D.~M\'arquez-Carreras, C.~Rovira, and S.~Tindel.
\newblock The {$p$}-spin interaction model with external field.
\newblock {\em Potential Anal.}, 21(4):311--362, 2004.

\bibitem{BM+11}
M.~Bayati and A.~Montanari.
\newblock The dynamics of message passing on dense graphs, with applications to
  compressed sensing.
\newblock {\em IEEE Trans. Inform. Theory}, 57(2):764--785, 2011.

\bibitem{BGJ+18}
G.~Ben~Arous, R.~Gheissari, and A.~Jagannath.
\newblock Algorithmic thresholds for tensor pca.
\newblock {\em ArXiv e-prints}, 2017.

\bibitem{BMMN+17}
G.~Ben~Arous, S.~Mei, A.~Montanari, and M.~Nica.
\newblock The landscape of the spiked tensor model.
\newblock {\em ArXiv e-prints}, 2017.

\bibitem{benaych2011}
F.~Benaych-Georges and R.~R. Nadakuditi.
\newblock The eigenvalues and eigenvectors of finite, low rank perturbations of
  large random matrices.
\newblock {\em Advances in Mathematics}, 227(1):494--521, 2011.

\bibitem{benaych2012}
F.~Benaych-Georges and R.~R. Nadakuditi.
\newblock The singular values and vectors of low rank perturbations of large
  rectangular random matrices.
\newblock {\em Journal of Multivariate Analysis}, 111:120--135, 2012.

\bibitem{Bol18}
E.~Bolthausen.
\newblock Search results web results a morita type proof of the
  replica-symmetric formula for sk.
\newblock {\em ArXiv e-prints}, 2018.

\bibitem{BKL}
A.~Bovier, I.~Kurkova, and M.~L\"owe.
\newblock Fluctuations of the free energy in the {REM} and the {$p$}-spin {SK}
  models.
\newblock {\em Ann. Probab.}, 30(2):605--651, 2002.

\bibitem{C14}
W.-K. Chen.
\newblock Variational representations for the {P}arisi functional and the
  two-dimensional {G}uerra-{T}alagrand bound.
\newblock {\em Ann. Probab.}, 45(6A):3929--3966, 2017.

\bibitem{Chen17}
W.-K. Chen.
\newblock Phase transition in the spiked random tensor with {R}ademacher prior.
\newblock {\em Ann. Statist.}, 47(5):2734--2756, 2019.

\bibitem{dembo_zeit}
A.~Dembo and O.~Zeitouni.
\newblock {\em Large deviations techniques and applications}, volume~38 of {\em
  Stochastic Modelling and Applied Probability}.
\newblock Springer-Verlag, Berlin, 2010.
\newblock Corrected reprint of the second (1998) edition.

\bibitem{deshpande2016}
Y.~Deshpande, E.~Abbe, and A.~Montanari.
\newblock Asymptotic mutual information for the balanced binary stochastic
  block model.
\newblock {\em Information and Inference: A Journal of the IMA}, 6(2):125--170,
  2016.

\bibitem{DM+14}
Y.~Deshpande and A.~Montanari.
\newblock Information-theoretically optimal sparse {PCA}.
\newblock {\em IEEE Internation Symposium on Information Theory}, pages
  2197--2201, 2014.

\bibitem{DMM+09}
D.~L. Donoho, A.~Maleki, and A.~Montanari.
\newblock Message-passing algorithms for compressed sensing.
\newblock {\em Proceedings of the National Academy of Sciences},
  106(45):18914--18919, 2009.

\bibitem{el2018detection}
A.~El~Alaoui and M.~I. Jordan.
\newblock Detection limits in the high-dimensional spiked rectangular model.
\newblock In {\em Conference On Learning Theory}, pages 410--438, 2018.

\bibitem{AKJ+17}
A.~El~Alaoui, F.~Krzakala, and M.~Jordan.
\newblock Finite-size corrections and likelihood ratio fluctuations in the
  spiked {W}igner model.
\newblock {\em ArXiv e-prints}, 2017.

\bibitem{AKJ20}
A.~El~Alaoui, F.~Krzakala, and M.~Jordan.
\newblock Fundamental limits of detection in the spiked {W}igner model.
\newblock {\em Ann. Statist.}, 48(2):863--885, 2020.

\bibitem{PecheD}
D.~F\'{e}ral and S.~P\'{e}ch\'{e}.
\newblock The largest eigenvalue of rank one deformation of large {W}igner
  matrices.
\newblock {\em Comm. Math. Phys.}, 272(1):185--228, 2007.

\bibitem{GWSV}
D.~Guo, Y.~Wu, S.~S. Shitz, and S.~Verd\'u.
\newblock Estimation in {G}aussian noise: Properties of the minimum mean-square
  error.
\newblock {\em IEEE Transactions and Information Theory}, 54(4):2371--2385,
  2011.

\bibitem{HillarL13}
C.~J. Hillar and L.{-}H. Lim.
\newblock Most tensor problems are {NP}-hard.
\newblock {\em Journal of the ACM}, 60(6):45:1--45:39, 2013.

\bibitem{hopkins2016fast}
S.~B. Hopkins, T.~Schramm, J.~Shi, and D.~Steurer.
\newblock Fast spectral algorithms from sum-of-squares proofs: tensor
  decomposition and planted sparse vectors.
\newblock In {\em Proceedings of the forty-eighth annual ACM symposium on
  Theory of Computing}, pages 178--191. ACM, 2016.

\bibitem{hopkins2015tensor}
S.~B. Hopkins, J.~Shi, and D.~Steurer.
\newblock Tensor principal component analysis via sum-of-square proofs.
\newblock In {\em Conference on Learning Theory}, pages 956--1006, 2015.

\bibitem{JT2}
A.~Jagannath and I.~Tobasco.
\newblock A dynamic programming approach to the {P}arisi functional.
\newblock {\em Proc. Amer. Math. Soc.}, 144(7):3135--3150, 2016.

\bibitem{JM+13}
A.~Javanmard and A.~Montanari.
\newblock State evolution for general approximate message passing algorithms,
  with applications to spatial coupling.
\newblock {\em Inf. Inference}, 2(2):115--144, 2013.

\bibitem{Johnstone}
I.~Johnstone.
\newblock On the distribution of the largest eigenvalue in principal components
  analysis.
\newblock {\em Ann. Statist.}, 29(2):295--327, 2001.

\bibitem{kolda2009tensor}
T.~G. Kolda and B.~W. Bader.
\newblock Tensor decompositions and applications.
\newblock {\em SIAM Review}, 51(3):455--500, 2009.

\bibitem{LM+16}
M.~Lelarge and L.~Miolane.
\newblock Fundamental limits of symmetric low-rank matrix estimation.
\newblock {\em Proceedings of the 2017 Conference on Learning Theory}, PMLR
  65:1297--1301, 2017.

\bibitem{LMLKZ+17}
T.~Lesieur, L.~Miolane, M.~Lelarge, F.~Krzakala, and L.~Zdeborov\'{a}.
\newblock Statistical and computational phase transitions in spiked tensor
  estimation.
\newblock {\em IEEE 2017 International Symposium}, pages 511--515, 2017.

\bibitem{Mar_Pas}
V.~A Marchenko and L.~A. Pastur.
\newblock Distribution of eigenvalues for some sets of random matrices.
\newblock {\em Mathematics of the USSR-Sbornik}, 1(4):457, 1967.

\bibitem{Montanari_spin_book2012}
M.~M\'ezard and A.~Montanari.
\newblock {\em Information, physics, and computation}.
\newblock Oxford graduate texts. Oxford University Press, Oxford, 2009.
\newblock Autre tirage : 2010, 2012.

\bibitem{MPV}
M.~M{\'e}zard, G.~Parisi, and M.~A. Virasoro.
\newblock {\em Spin glass theory and beyond}, volume~9 of {\em World Scientific
  Lecture Notes in Physics}.
\newblock World Scientific Publishing Co., Inc., Teaneck, NJ, 1987.

\bibitem{M+17}
L.~Miolane.
\newblock Fundamental limits of symmetric low-rank matrix estimation: the
  non-symmetric case.
\newblock {\em ArXiv e-prints}, 2017.

\bibitem{miolane2018}
L.~Miolane.
\newblock Phase transitions in spiked matrix estimation: information-theoretic
  analysis.
\newblock {\em arXiv preprint arXiv:1806.04343}, 2018.

\bibitem{MRZ17}
A.~Montanari, D.~Reichman, and O.~Zeitouni.
\newblock On the limitation of spectral methods: from the {G}aussian hidden
  clique problem to rank one perturbations of {G}aussian tensors.
\newblock {\em IEEE Trans. Inform. Theory}, 63(3):1572--1579, 2017.

\bibitem{MR+14}
A.~Montanari and E.~Richard.
\newblock A statistical model for tensor {PCA}.
\newblock {\em Neural Information Processing Systems}, pages 2897--2905, 2014.

\bibitem{MR+16}
A.~Montanari and E.~Richard.
\newblock Non-negative principal component analysis: message passing algorithms
  and sharp asymptotics.
\newblock {\em IEEE Trans. Inform. Theory}, 62(3):1458--1484, 2016.

\bibitem{Montanar_Sen_STOC2016}
A.~Montanari and S.~Sen.
\newblock Semidefinite programs on sparse random graphs and their application
  to community detection.
\newblock In {\em Proceedings of the 48th Annual ACM SIGACT Symposium on Theory
  of Computing}, STOC 2016, pages 814--827, New York, NY, USA, 2016. ACM.

\bibitem{OMH13}
A.~Onatski, M.~J. Moreira, and M.~Hallin.
\newblock Asymptotic power of sphericity tests for high-dimensional data.
\newblock {\em Ann. Statist.}, 41(3):1204--1231, 2013.

\bibitem{Panchenko05}
D.~Panchenko.
\newblock Free energy in the generalized {Sherrington-Kirkpatrick} mean field
  model.
\newblock {\em Rev. Math. Phys.}, 17(7):793--857, 2005.

\bibitem{Pan}
D.~Panchenko.
\newblock {\em The {S}herrington-{K}irkpatrick model}.
\newblock Springer Monographs in Mathematics. Springer, New York, 2013.

\bibitem{PanMultipSP}
D.~Panchenko.
\newblock The free energy in a multi-species {S}herrington-{K}irkpatrick model.
\newblock {\em Ann. Probab.}, 43(6):3494--3513, 2015.

\bibitem{Panchenko2015}
D.~{Panchenko}.
\newblock Free energy in the mixed $p$-spin models with vector spins.
\newblock {\em Ann. Probab.}, 46(2):865--896, 03 2018.

\bibitem{paul2007}
D.~Paul.
\newblock Asymptotics of the leading sample eigenvalues for a spiked covariance
  model.
\newblock {\em Statistica Sinica}, 17(4):1617--1642, 2007.

\bibitem{Peche}
S.~P\'{e}ch\'{e}.
\newblock The largest eigenvalue of small rank perturbations of {H}ermitian
  random matrices.
\newblock {\em Probab. Theory Related Fields}, 134(1):127--173, 2006.

\bibitem{PWB17}
A.~Perry, A.~S. Wein, and A.~Bandeira.
\newblock Statistical limits of spiked tensor models.
\newblock {\em ArXiv e-prints}, 2017.

\bibitem{PWBM16}
A.~Perry, A.~S. Wein, A.~Bandeira, and A.~Moitra.
\newblock Optimality and sub-optimality of \mbox{PCA} for spiked random
  matrices and synchronization.
\newblock {\em ArXiv e-prints}, 2016.

\bibitem{RP+16}
G.~Reeves and H.~D. Pfister.
\newblock The replica-symmetric prediction for compressed sensing with
  {G}aussian matrices is exact.
\newblock {\em ArXiv e-prints}, 2016.

\bibitem{RBBC+18}
V.~Ros, G.~Ben~Arous, G.~Biroli, and C.~Cammarota.
\newblock Complex energy landscapes in spiked-tensor and simple glassy models:
  Ruggedness, arrangements of local minima and phase transitions.
\newblock {\em ArXiv e-prints}, 2018.

\bibitem{Tal03}
M.~Talagrand.
\newblock The {P}arisi formula.
\newblock {\em Ann. of Math. (2)}, 163(1):221--263, 2006.

\bibitem{Talbook1}
M.~Talagrand.
\newblock {\em Mean field models for spin glasses. Basic examples}, volume~54
  of {\em Ergebnisse der Mathematik und ihrer Grenzgebiete. 3. Folge. A Series
  of Modern Surveys in Mathematics [Results in Mathematics and Related Areas.
  3rd Series. A Series of Modern Surveys in Mathematics]}.
\newblock Springer-Verlag, Berlin, 2010.

\bibitem{Talbook2}
M.~Talagrand.
\newblock {\em Mean field models for spin glasses. Advanced replica-symmetry
  and low temperature}, volume~55 of {\em Ergebnisse der Mathematik und ihrer
  Grenzgebiete. 3. Folge. A Series of Modern Surveys in Mathematics [Results in
  Mathematics and Related Areas. 3rd Series. A Series of Modern Surveys in
  Mathematics]}.
\newblock Springer, Heidelberg, 2011.

\bibitem{WEM19}
A.~S. Wein, A.~El~Alaoui, and C.~Moore.
\newblock {The Kikuchi Hierarchy and Tensor PCA}.
\newblock {\em ArXiv e-prints}, 2019.

\bibitem{WV}
Y.~Wu and S.~Verd\'u.
\newblock Functional properties of minimum mean-square error and mutual
  information.
\newblock {\em IEEE Transactions and Information Theory}, 58(3):1289--1301,
  2012.

\end{thebibliography}

\end{document}